\documentclass[10pt,a4paper]{amsart}
\setlength{\topmargin}{10mm} \setlength{\headheight}{0mm}
\setlength{\headsep}{0mm} \setlength{\footskip}{2mm}
\setlength{\textheight}{241mm} \setlength{\oddsidemargin}{6mm}
\setlength{\evensidemargin}{4.6mm} \setlength{\marginparsep}{0mm}
\setlength{\marginparwidth}{0mm} \setlength{\textwidth}{170mm}
\usepackage[utf8]{inputenc}
\usepackage{amscd,amssymb,amsopn,amsmath,amsthm,graphics,amsfonts,enumerate,verbatim,calc}
\usepackage[dvips]{graphicx}
\usepackage[all]{xypic}
\usepackage{rotating}

\usepackage{mathpazo}
\usepackage{color}
\usepackage{latexsym}
\usepackage{amsthm,amsfonts,amssymb,mathrsfs}
\usepackage{rotating}
\usepackage[leqno]{amsmath}
\usepackage{xspace}
\usepackage[all]{xy}
\usepackage{longtable}
\textwidth=16.cm \textheight=23 cm \topmargin=0.00cm
\oddsidemargin=0.00cm \evensidemargin=0.00cm \headheight=16.9pt
\headsep=5 mm \numberwithin{equation}{section}
\hyphenation{semi-stable} \emergencystretch=10pt

\newtheorem{theorem}{Theorem}[section]
\newtheorem{lemma}[theorem]{Lemma}

\newtheorem{proposition}[theorem]{Proposition}
\newtheorem{corollary}[theorem]{Corollary}

\theoremstyle{definition}

\theoremstyle{remark}
\newtheorem{remark}[theorem]{Remark}
\newtheorem{fact}[theorem]{Fact}
\newtheorem{example}[theorem]{Example}
\newtheorem{observation}[theorem]{Observation}

\newtheorem{question}[theorem]{Question}
\newtheorem{conjecture}[theorem]{Conjecture}

\newtheorem{acknowledgement}{Acknowledgement}

\newcommand{\HH}{\operatorname{H}}

\newcommand{\Ass}{\operatorname{Ass}}

\newcommand{\im}{\operatorname{im}}

\newcommand{\grade}{\operatorname{grade}}

\newcommand{\Spec}{\operatorname{Spec}}

\newcommand{\tor}{\operatorname{tor}}
\newcommand{\pd}{\operatorname{pd}}

\newcommand{\Syz}{\operatorname{Syz}}

\newcommand{\rank}{\operatorname{rank}}

\newcommand{\V}{\operatorname{V}}

\newcommand{\Ext}{\operatorname{Ext}}
\newcommand{\Se}{\operatorname{S}}

\newcommand{\DVR}{\operatorname{DVR}}
\newcommand{\Supp}{\operatorname{Supp}}

\newcommand{\Tor}{\operatorname{Tor}}

\newcommand{\Hom}{\operatorname{Hom}}

\newcommand{\depth}{\operatorname{depth}}

\newcommand{\hdeg}{\operatorname{hdeg}}
\newcommand{\coker}{\operatorname{coker}}

\newcommand{\h}{h}

\newcommand{\lo}{\longrightarrow}
\newcommand{\fm}{\frak{m}}
\newcommand{\fp}{\frak{p}}

\newcommand{\fa}{\frak{a}}

\newcommand{\fn}{\frak{n}}

\begin{document}

\author[]{Mohsen Asgharzadeh}

\address{}
\email{mohsenasgharzadeh@gmail.com}

\title[ ]
{Finite support of tensor products}

\subjclass[2010]{ Primary:  13D45; Secondary: 13H10; 13H15; 13D07.}
\keywords{ Cohomological degree; local cohomology; tensor products; torsion
module; vector bundle. }
\dedicatory{}
\begin{abstract}
	We determine the submodule of finite support of the tensor product of two modules $M$ and $N$ over a local ring, and estimate its length in terms of the properties of $M$ and $N$. In addition, we compute higher local cohomology modules of tensor products in a series of nontrivial cases. As applications, we calculate the depth of tensor powers and establish several criteria for freeness.
\end{abstract}

\maketitle

\tableofcontents

\section{Introduction}

Let $(R, \fm, k)$ be a commutative Noetherian local ring of dimension $d$, and assume all modules are finitely generated. Denote by $\HH^0_{\fm}(M)$ the submodule of $M$ consisting of elements annihilated by some power of $\fm$. For modules $M$ and $N$, we consider the local cohomology $\HH^0_{\fm}(M \otimes_R N)$, and let $\h^0(M \otimes_R N) := \ell(\HH^0_{\fm}(M \otimes_R N))$ denote its length.

\begin{question}[cf. {\cite[Page 704]{Wolmer}}]
	Can one estimate $\h^0(M \otimes_R N)$ in terms of $M$ and $N$?
\end{question}

Under various assumptions on the ring and the modules, Vasconcelos established several bounds on $\h^0(M \otimes_R N)$. For example, he considered the case where $R$ is regular and $N$ is locally free. He also raised the problem of extending these results to situations where $R$ is Gorenstein with an isolated singularity; see \cite[Question 8.2]{2010}. In \S2, we extend some of Vasconcelos' results and additionally address the singular case; see Propositions~\ref{v5} and~\ref{buchs}.

In particular, suppose that $R$ is Gorenstein with $d \geq 1$, and that $M$ has a presentation
\[
0 \to R^n \xrightarrow{\varphi} R^{n+d-1} \to M \to 0,
\]
where $I_n(\varphi)$ is $\fm$-primary. Then Vasconcelos proved the following inequality:
\[
\h^0(M \otimes_R M) \leq d \left((d-1)\deg(M) + \ell\left(\frac{R}{I_n(\varphi)}\right)\right)^2. \tag{$\ast$}
\]
In \cite[Question 8.1]{2010}, he asked how sharp this bound is compared to the actual value of $\h^0(M \otimes_R M)$. In \S3, we provide explicit computations to compare both sides of the inequality. In particular, we construct examples where the left-hand side of $(\ast)$ dominates $\h^0(M \otimes_R M)^2$; see Proposition~\ref{squar}.

As an alternative approach to \cite[Question 8.1]{2010}, we consider criteria for vanishing of the left-hand side of $(\ast)$:

\begin{proposition}
	Let $(R,\fm,k)$ be a local ring and $\fa \subseteq R$ an ideal. Suppose $\pd(M) < \infty$, and that at least one of the modules $M$ or $N$ is locally free on $\Spec(R) \setminus \V(\fa)$. Let $0 \leq r < d := \dim R$, and assume
	\[
	\grade_R(\fa, M) + \grade_R(\fa, N) \geq d + r + 1.
	\]
	Then $\HH^i_{\fa}(M \otimes_R N) = 0$ for all $0 \leq i \leq r$.
\end{proposition}

The special case $\fa = \fm$ was proven implicitly by Auslander and explicitly by Huneke and Wiegand for hypersurfaces. However, our results apply to more general settings.

In \S4, we partially address Vasconcelos' question about torsion in tensor products. For instance, if $(R, \fm)$ is a 3-dimensional Cohen--Macaulay local ring and $M$ is a reflexive module with $\pd(M) < \infty$, we prove that if $M^{\otimes 3}$ is torsion-free, then $M$ is free. In Corollary~\ref{4n}, we generalize this result to higher dimensions, confirming Vasconcelos' prediction.

In \S5, we examine the higher local cohomology modules $\HH^+_{\fm}(M \otimes_R N)$, dividing the discussion into four subsections. \S5.1 focuses on low-dimensional cases. \S5.2 provides explicit computations of $\ell(\HH^i_{\fm}(M \otimes_R M^*))$ and $\ell(\HH^i_{\fm}(M \otimes_R M))$ when $R$ is regular (see Proposition~\ref{main5b}). As an application, this yields a new negative answer to \cite[Question 3.5]{yosh2}. In \S5.3, we extend results of Auslander from regular rings to hypersurfaces.

Assume $R$ is Cohen--Macaulay of type at most two. By \cite[Theorem 6.1.2]{hsv}, the vanishing $\Ext^1_R(\omega_R, R) = \Ext^2_R(\omega_R, R) = 0$ implies that $R$ is Gorenstein. We offer a complementary result:

\begin{corollary}
	Let $R$ be a generically Gorenstein (e.g., reduced) Cohen--Macaulay local ring with a canonical module. If $R$ is of type at most two, then $\Ext^1_R(\omega_R, R) = 0$ if and only if $R$ is Gorenstein.
\end{corollary}

In addition, Proposition~\ref{vector2} supports Yoshida's conjecture \cite[Conjecture 3.4]{yosh}. We also prove:

\begin{proposition}
	Let $(R, \fm)$ be a Cohen--Macaulay local ring, $M$ a perfect module, and $N$ a locally free module of constant rank. Then for all $i < \dim(M)$,
	\[
	\h^i(M \otimes_R N) \leq \sum_{j = 0}^{\pd(M)} \beta_j(M) \h^{i+j}(N).
	\]
\end{proposition}

In \S5.4, we provide two criteria for freeness. First, suppose $R$ is regular and $M$ is locally free on $\Spec(R) \setminus \V(\fa)$, satisfying Serre's condition $(\Se_r)$. If $\HH^r_{\fa}(M^{\otimes (d - r)}) = 0$, then $M$ is free. This generalizes results of Auslander (for $r = 0$) and Huneke--Wiegand (for $1 \leq r \leq 2$ and $\fa = \fm$). The second criterion follows from Proposition 1.2:

\begin{corollary}\label{og}
	Let $R$ be a local ring. If $M$ is locally free over $\Spec(R) \setminus \V(\fa)$ and has finite projective dimension, then $M$ is free provided
	\[
	\grade(\fa, M) + \grade(\fa, M^*) \geq d + 2.
	\]
\end{corollary}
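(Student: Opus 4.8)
The plan is to substitute $N:=M^{\ast}$ into Proposition 1.2, read off a depth estimate for $M\otimes_R M^{\ast}$, and then upgrade it to freeness of $M$ by a Noetherian induction which, at each prime, appeals to the classical punctured-spectrum criterion of Auslander (later made explicit by Huneke and Wiegand). First I would dispose of the degenerate cases: we may assume $M\neq 0$; then $M^{\ast}\neq 0$ as well, reading ``locally free over $\Spec(R)\setminus\V(\fa)$'' as ``locally free of positive rank'', so that $M$ has positive rank at some point of the (then nonempty) open set $\Spec(R)\setminus\V(\fa)$. Consequently $\grade_R(\fa,M^{\ast})\leq\depth_R(M^{\ast})\leq d$ and, symmetrically, $\grade_R(\fa,M)\leq d$, so from $\grade_R(\fa,M)+\grade_R(\fa,M^{\ast})\geq d+2$ we get $\grade_R(\fa,M)\geq 2$ and $\grade_R(\fa,M^{\ast})\geq 2$; in particular $d\geq 2$.

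Next I would apply Proposition 1.2 with $N:=M^{\ast}$ and $r:=1$: its hypotheses hold ($\pd(M)<\infty$, $M$ is locally free over $\Spec(R)\setminus\V(\fa)$, $0\leq 1<d$, and $\grade_R(\fa,M)+\grade_R(\fa,M^{\ast})\geq d+2=d+1+1$), so $\HH^{0}_{\fa}(M\otimes_R M^{\ast})=\HH^{1}_{\fa}(M\otimes_R M^{\ast})=0$, i.e. $\grade_R(\fa,\,M\otimes_R M^{\ast})\geq 2$. Two consequences will be used. First, a length-two $M$-regular sequence inside $\fa$ has no associated prime of $M$ in its vanishing locus and strictly raises the height twice, which forces $\Supp(M)\cap\V(\fa)\subseteq\{\fp:\Ht\fp\geq 2\}$ and $\Ass(M)\cap\V(\fa)=\varnothing$. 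Second, grade does not drop under localization (it is detected by vanishing of $\Ext$'s, which localize), so for $\fp\in\V(\fa)$ one has $\grade_{R_{\fp}}(\fp R_{\fp},\,(M\otimes_R M^{\ast})_{\fp})\geq 2$, where $(M\otimes_R M^{\ast})_{\fp}=M_{\fp}\otimes_{R_{\fp}}(M_{\fp})^{\ast}$.

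Then I would prove that $M_{\fp}$ is free for all $\fp\in\Spec(R)$; taking $\fp=\fm$ finishes. Induct on $\Ht\fp$. If $\fp\notin\V(\fa)$ this is the hypothesis. If $\fp\in\V(\fa)$ and $\Ht\fp\leq 1$, then $M_{\fp}=0$ by the first consequence above, so $M_{\fp}$ is free. Finally suppose $\fp\in\V(\fa)$ and $\Ht\fp\geq 2$. By the inductive hypothesis $M_{\fq}$ is free for every $\fq\subsetneq\fp$, so $M_{\fp}$ is free on the punctured spectrum of $R_{\fp}$; in particular, if $M_{\fp}=0$ we are done, and otherwise $(M_{\fp})^{\ast}\neq 0$ as well (else $M_{\fp}$ would have finite length over $R_{\fp}$, hence $\fp\in\Ass(M)$, contradicting the second assertion above). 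Thus $M_{\fp}\otimes_{R_{\fp}}(M_{\fp})^{\ast}$ is a nonzero $R_{\fp}$-module which is free on the punctured spectrum of $R_{\fp}$ and of $\fp R_{\fp}$-depth $\geq 2$, hence satisfies $(\Se_2)$ over $R_{\fp}$, i.e. is reflexive. Since $\pd_{R_{\fp}}(M_{\fp})<\infty$, the Auslander--Huneke--Wiegand freeness criterion --- a finitely generated module of finite projective dimension over a local ring which is free on the punctured spectrum and whose tensor product with its dual is reflexive must be free --- shows $M_{\fp}$ is free, completing the induction.

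The crux, and the only non-formal ingredient, is the freeness criterion invoked in the last step: one must cite (or re-prove) the extension of Auslander's theorem ``$M\otimes_R M^{\ast}$ reflexive $\Rightarrow M$ free for $M$ free on the punctured spectrum of a regular local ring'' to modules of finite projective dimension over an arbitrary local ring --- precisely the content the paper attributes, in the case $\fa=\fm$, to Auslander (implicitly) and to Huneke--Wiegand (explicitly, and over hypersurfaces). Everything else is bookkeeping: that the vanishing supplied by Proposition 1.2 localizes to exactly the reflexivity hypothesis the criterion consumes, and that the punctured-spectrum condition over $R_{\fp}$ is the one produced by the induction. If an earlier section already provides a lemma deducing freeness of $M$ directly from $\pd(M)<\infty$ together with $\HH^{0}_{\fa}(M\otimes_R M^{\ast})=\HH^{1}_{\fa}(M\otimes_R M^{\ast})=0$, the Noetherian induction can be bypassed and the corollary is immediate from Proposition 1.2; the non-faithful edge case $M^{\ast}=0$ is excluded by reading ``locally free'' as ``locally free of positive rank''.
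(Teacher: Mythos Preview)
Your first two steps coincide with the paper's: deduce $\grade(\fa,M)\geq 2$ (hence $d\geq 2$), then apply Proposition~1.2 with $N=M^{\ast}$ and $r=1$ to obtain $\HH^{0}_{\fa}(M\otimes_R M^{\ast})=\HH^{1}_{\fa}(M\otimes_R M^{\ast})=0$. The divergence is in how you pass from this vanishing to freeness. The paper does it in one stroke via the lemma immediately preceding the corollary: for $M$ locally free off $\V(\fa)$ with $\grade(\fa,M)>0$, the kernel and cokernel of the Auslander--Goldman map $\varphi_M\colon M\otimes_R M^{\ast}\to\Hom_R(M,M)$ are $\fa$-torsion; the vanishing of $\HH^{0}_{\fa}$ on $M\otimes_R M^{\ast}$ kills $\ker\varphi_M$, and the vanishing of $\HH^{1}_{\fa}$ together with $\grade(\fa,\Hom_R(M,M))>0$ kills $\coker\varphi_M$, so $\varphi_M$ is an isomorphism and $M$ is projective. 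No Noetherian induction, no external criterion, and in fact no use of $\pd(M)<\infty$ at this stage (that hypothesis is consumed entirely by Proposition~1.2).

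Your route---localize and induct on height, then invoke an ``Auslander--Huneke--Wiegand'' freeness criterion at each prime---is correct in outline, and your closing sentence anticipates exactly what happens: the paper does supply the direct lemma you hoped for. Two minor points on your version: the criterion you cite (``$\pd(M)<\infty$, free on the punctured spectrum, $M\otimes_R M^{\ast}$ reflexive $\Rightarrow$ $M$ free'') is not a standard off-the-shelf reference over an arbitrary local ring---its proof is precisely the $\fa=\fm$ case of the paper's lemma; and the step ``satisfies $(\Se_2)$, i.e.\ is reflexive'' is a slip, since $(\Se_2)$ alone does not give reflexivity in general. What you actually need, and have, is $\depth_{R_{\fp}}(M_{\fp}\otimes(M_{\fp})^{\ast})\geq 2$, which is exactly the input the $\varphi_M$-argument consumes.
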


This generalizes a result of Hartshorne--Ogus, who assumed $R$ Gorenstein and $\fa = \fm$. As an application of Proposition 1.2, we observe:

\begin{observation}
	Let $(R, \fm)$ be a local ring of dimension $d$, and let $M$ be locally free on $\Spec(R) \setminus \V(\fa)$. Then
	\[
	\grade(\fa, M^{\otimes i}) \geq d - i \cdot \pd(M) \quad \text{for all } i > 1.
	\]
\end{observation}

This suggests studying the sequence $a_n := \depth(M^{\otimes n})$. What can be said about the asymptotic behavior of $(a_n)$?
In \S6, we compute depths of tensor powers and prove their stability in certain cases. For instance, the following result removes the regularity assumption from a theorem of Huneke--Wiegand:

\begin{proposition}\label{dpoc}
	Let $R$ be a local ring and $M$ a locally free module of projective dimension $1$. Then
	\[
	\depth(M^{\otimes i}) = \max\{0, \depth(R) - i\}.
	\]
\end{proposition}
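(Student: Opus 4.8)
The plan is to induct on $i$, working throughout with a minimal free presentation $0\to R^a\xrightarrow{\varphi}R^b\to M\to 0$, which exists since $\pd(M)=1$; then $a\geq 1$, every entry of $\varphi$ lies in $\fm$, and for any module $N$ I write $\varphi_N:N^a\to N^b$ for the induced map, so $M\otimes_RN=\coker(\varphi_N)$ and $\Tor_1^R(M,N)=\ker(\varphi_N)$. The base case $i=1$ is Auslander--Buchsbaum: $\depth(M)=\depth(R)-\pd(M)=\depth(R)-1$, and since $\depth(M)\geq 0$ this already forces $\depth(R)\geq 1$, whence $\depth(M)=\max\{0,\depth(R)-1\}$. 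I will also use that each $M^{\otimes i}$ is again locally free on the punctured spectrum, so $\Tor_1^R(M,M^{\otimes j})$ is supported only at $\fm$, hence of finite length, for every $j\geq1$.

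For the inductive step put $N:=M^{\otimes(i-1)}$ and split according to $\depth(N)$. \emph{First regime: $\depth(N)\geq 1$.} Then the finite‑length module $\Tor_1^R(M,N)=\ker(\varphi_N)$ lies inside $\HH^0_\fm(N^a)=\HH^0_\fm(N)^a=0$, so it vanishes and we get a short exact sequence $0\to N^a\xrightarrow{\varphi_N}N^b\to M^{\otimes i}\to 0$. Running the long exact sequence of $\HH^\bullet_\fm(-)$ yields $\HH^j_\fm(M^{\otimes i})=0$ for $j\leq\depth(N)-2$ and $\HH^{\depth(N)-1}_\fm(M^{\otimes i})=\ker\bigl(\HH^{\depth(N)}_\fm(N)^a\xrightarrow{\varphi}\HH^{\depth(N)}_\fm(N)^b\bigr)$. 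This last kernel is nonzero: the Artinian module $\HH^{\depth(N)}_\fm(N)$ has nonzero socle, and placing a socle element in a single coordinate produces a nonzero vector annihilated by $\varphi$ because the entries of $\varphi$ lie in $\fm$. Hence $\depth(M^{\otimes i})=\depth(N)-1$, which by the inductive hypothesis equals $\max\{0,\depth(R)-i\}$ throughout $1\leq i\leq\depth(R)$.

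\emph{Second regime: $\depth(N)=0$.} Here $\depth(R)-i<0$, so it suffices to prove $\depth(M^{\otimes i})=0$. Put $W:=\HH^0_\fm(N)\neq 0$, $K:=\Tor_1^R(M,N)$ (finite length), and $C:=\im(\varphi_N)\subseteq N^b$. From $0\to K\to N^a\to C\to 0$, finiteness of $K$ (so $\HH^1_\fm(K)=0$) gives a surjection $\HH^0_\fm(N^a)=W^a\twoheadrightarrow\HH^0_\fm(C)$ with kernel $K$; tracking this through the inclusion $C\hookrightarrow N^b$ identifies $\HH^0_\fm(C)=C\cap\HH^0_\fm(N^b)$ with the submodule $\varphi_N(W^a)$ of $N^b$. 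Then the exact sequence $\HH^0_\fm(C)\to\HH^0_\fm(N^b)\to\HH^0_\fm(M^{\otimes i})$ coming from $0\to C\to N^b\to M^{\otimes i}\to 0$ exhibits $W^b/\varphi_N(W^a)$ as a submodule of $\HH^0_\fm(M^{\otimes i})$. Since every entry of $\varphi$ lies in $\fm$ we have $\varphi_N(W^a)\subseteq\fm W^b$, so $W^b/\varphi_N(W^a)\neq 0$ by Nakayama, and therefore $\depth(M^{\otimes i})=0$, completing the induction.

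The step I expect to require the most care is the identification $\HH^0_\fm(C)=\varphi_N(W^a)$ in the second regime: this is exactly where the local‑freeness hypothesis (equivalently, the finite support of $\Tor$) enters, and one must carry the actual submodule picture inside $N^b$ — not merely an abstract isomorphism — for the Nakayama step to apply. A secondary thing to check is consistency at the degenerate values: when $\depth(N)=1$ the index range in the first regime collapses to $j=0$, and when $M$ has finite length one has $\depth(R)=1$; both cases are readily seen to agree with the stated formula.
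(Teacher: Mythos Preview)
Your proof is correct and shares the paper's two-regime inductive scaffolding, but the mechanics in each regime are genuinely different. In the range $1\leq i\leq\depth(R)$ the paper, after showing $\Tor_1^R(M,M^{\otimes i-1})=0$ exactly as you do, tensors minimal free resolutions to obtain $\pd(M^{\otimes i})=i$ and then invokes Auslander--Buchsbaum; you bypass projective dimension entirely and read off $\depth$ from the local-cohomology long exact sequence, exploiting minimality of the presentation via the socle of the Artinian module $\HH^{\depth N}_{\fm}(N)$. In the range $i\geq\depth(R)$ the paper first proves the auxiliary inequality $n\leq m$ by localising the presentation at a minimal prime, then uses Auslander's depth result to get $\Tor_1^R(M,M^{\otimes i-1})\neq 0$, and finally runs a length count to force $n>m$ under the contrary assumption $\HH^0_{\fm}(M^{\otimes i})=0$. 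Your Nakayama argument --- $\varphi_N(W^a)\subseteq\fm W^b$ with $W\neq 0$ --- is shorter, needs neither the rank inequality nor the nonvanishing of $\Tor_1$, and makes the role of minimality of the presentation completely transparent. The price you pay is the appeal to Artinianness of $\HH^{\depth N}_{\fm}(N)$ in the first regime, which is standard but worth citing; conversely, the paper's route through $\pd(M^{\otimes i})=i$ records extra information you do not obtain.
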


We conclude by mentioning that a forthcoming work \cite{acs} will explore the asymptotic behavior of $\depth(\Syz_j(k)^{\otimes i})$ over rings of positive depth.

\section{Bounds on $\h^0(-\otimes \sim)$: after Vasconcelos}

By $\mu(-)$  we mean the minimal number of elements that need to generate $(-)$.

\begin{lemma}\label{0}
Let  $M$ be of finite length. Then
$\h^0(M\otimes_RN)\leq\ell(M)\mu(N)$.
\end{lemma}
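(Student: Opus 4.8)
The plan is to reduce everything to a surjection coming from a free cover of $N$ and then use that finite length is additive. First I would choose a minimal free presentation $R^{s}\xrightarrow{\psi} R^{t}\to N\to 0$ with $t=\mu(N)$. Tensoring with $M$ (right exactness of $-\otimes_RM$) gives an exact sequence $M^{s}\to M^{t}\to M\otimes_RN\to 0$, so $M\otimes_RN$ is a quotient of $M^{t}=M^{\oplus\mu(N)}$. Since $M$ has finite length, so does $M^{\oplus\mu(N)}$, hence so does its quotient $M\otimes_RN$; in particular $M\otimes_RN$ is already $\fm$-torsion, so $\HH^0_{\fm}(M\otimes_RN)=M\otimes_RN$ and $\h^0(M\otimes_RN)=\ell(M\otimes_RN)$. (This also shows the $\HH^0_{\fm}$ is a harmless decoration here; the content is purely a length estimate.)

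Next I would invoke the fact that length is additive on short exact sequences and monotone under surjections: from the surjection $M^{\oplus\mu(N)}\twoheadrightarrow M\otimes_RN$ we get
\[
\h^0(M\otimes_RN)=\ell(M\otimes_RN)\leq\ell\bigl(M^{\oplus\mu(N)}\bigr)=\mu(N)\,\ell(M).
\]
That is exactly the claimed bound. An alternative, slightly more hands-on route avoiding even the presentation: pick generators $n_1,\dots,n_{\mu(N)}$ of $N$, so every element of $M\otimes_RN$ is a sum $\sum m_i\otimes n_i$; the map $M^{\oplus\mu(N)}\to M\otimes_RN$, $(m_i)\mapsto\sum m_i\otimes n_i$, is surjective, and one finishes as above.

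I do not anticipate a genuine obstacle here — the statement is essentially the observation that tensoring a finite-length module against $\mu(N)$ generators of $N$ can only shrink length. The one point worth stating carefully is why $\HH^0_{\fm}(M\otimes_RN)=M\otimes_RN$: it is because a finite-length module over a noetherian local ring is annihilated by a power of $\fm$, which is immediate once we know $M\otimes_RN$ has finite length. Everything else is standard additivity/monotonicity of $\ell(-)$, which will be used freely throughout the paper.
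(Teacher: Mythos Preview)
Your argument is correct and in fact cleaner than the paper's. The paper proves the lemma by induction on $\ell(M)$: for the base case $M=k$ one has $M\otimes_RN=N/\fm N$, and for the inductive step one takes a short exact sequence $0\to k\to M\to\overline{M}\to 0$, tensors with $N$, and bounds $\ell(M\otimes_RN)\leq\ell(\overline{M}\otimes_RN)+\ell(\ker f)\leq(\ell(M)-1)\mu(N)+\mu(N)$. Your approach bypasses the induction entirely by noting the single surjection $M^{\oplus\mu(N)}\twoheadrightarrow M\otimes_RN$ coming from a free cover of $N$; this gives the bound in one stroke. The two routes are equivalent in strength, but yours is shorter and isolates the content more transparently as ``quotient of a finite-length module has no larger length''. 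The paper's inductive argument, on the other hand, foreshadows the filtration-style reasoning used repeatedly later (e.g.\ in Lemma~\ref{reduction}), so it has some expository value as a warm-up.
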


\begin{proof}
	The proof is by induction on $\ell(M)$. Suppose $\ell(M)=1$. Then
 $M=R/ \fm$. By definition, $\HH^0_{\fm}(M\otimes_RN)=M\otimes_RN=\frac{N}{\fm N}$
and so $\h^0(M\otimes_RN)=\mu(N)=\ell(M)\mu(N)$. We look at the exact sequence
$0\to R/ \fm\to M\to \overline{M}\to 0$ where $\ell(\overline{M})=\ell(M)-1$.
By induction, $\ell(\overline{M}\otimes_R N)\leq\ell(\overline{M})\mu(N)$. The sequence induces
$ R/ \fm\otimes_RN\stackrel{g}\lo M\otimes_RN\stackrel{f}\lo \overline{M}\otimes_RN\to 0$.
Since $R/ \fm\otimes_RN\twoheadrightarrow \im(g)\to 0$ is surjective,  $\ell(\ker(f))=\ell(\im(g))\leq\mu(N)$. We have
$$ \ell(M\otimes_R N)=\ell(\overline{M}\otimes_R N)+\ell(\ker(f))\leq\ell(\overline{M}\otimes_RN)+\ell(N/ \fm
N)\leq\ell(\overline{M})\mu(N)+\mu(N).$$
So, $\ell(\HH^0_{\fm}(M\otimes _RN))= \ell(M\otimes _RN)\leq(\ell(M)-1)\mu(N)+\mu(N)=\mu(N)\ell(M)$.
\end{proof}

The particular case of the next result stated in \cite[Proposition 2.1]{2010} without a proof:

\begin{lemma}\label{reduction} 
	One has $\h^0(M\otimes_RN)\leq\h^0(M)\mu(N)+\h^0(\frac{M}{\HH^0_{\fm}(M)}\otimes_RN)$.
In particular,  $$\h^0(M\otimes_RN)\leq\h^0(M)\mu(N)+\h^0(N)\mu(N)+\h^0(M/ \HH^0_{\fm}(M) \otimes_R\ N/ \HH^0_{\fm}(N)).$$
\end{lemma}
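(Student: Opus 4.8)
The plan is to peel off the $\fm$-power torsion of $M$, tensor the resulting short exact sequence with $N$, and control the error term using the finite length case already settled in Lemma \ref{0}. Put $M':=\HH^0_{\fm}(M)$, a module of finite length $\ell(M')=\h^0(M)$, and $\overline{M}:=M/M'$. Tensoring $0\to M'\to M\to\overline{M}\to 0$ with $N$ gives a right exact sequence $M'\otimes_RN\stackrel{g}\lo M\otimes_RN\to\overline{M}\otimes_RN\to 0$. Let $K:=\im(g)$, so that $0\to K\to M\otimes_RN\to\overline{M}\otimes_RN\to 0$ is exact. Since $M'$ has finite length so does $M'\otimes_RN$, hence so does its quotient $K$; moreover $M'\otimes_RN=\HH^0_{\fm}(M'\otimes_RN)$, so Lemma \ref{0} yields
$$\ell(K)\leq\ell(M'\otimes_RN)=\h^0(M'\otimes_RN)\leq\ell(M')\mu(N)=\h^0(M)\mu(N).$$

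Next I would apply the left exact functor $\HH^0_{\fm}(-)$ to $0\to K\to M\otimes_RN\to\overline{M}\otimes_RN\to 0$. As $K$ is $\fm$-power torsion we have $\HH^0_{\fm}(K)=K$, so we obtain an exact sequence $0\to K\to\HH^0_{\fm}(M\otimes_RN)\to\HH^0_{\fm}(\overline{M}\otimes_RN)$. Each of these modules has finite length (the $\HH^0_{\fm}$ of a finitely generated module over $(R,\fm)$ is finitely generated and $\fm$-torsion, hence of finite length), so taking lengths gives
$$\h^0(M\otimes_RN)\leq\ell(K)+\h^0(\overline{M}\otimes_RN)\leq\h^0(M)\mu(N)+\h^0(\overline{M}\otimes_RN),$$
which is the first assertion.

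For the ``in particular'' part I would simply iterate, now peeling the torsion off the second factor. Using $\overline{M}\otimes_RN\cong N\otimes_R\overline{M}$ and applying the first inequality with the two factors interchanged, with $\overline{N}:=N/\HH^0_{\fm}(N)$, yields $\h^0(\overline{M}\otimes_RN)\leq\h^0(N)\mu(\overline{M})+\h^0(\overline{N}\otimes_R\overline{M})$. Since $\overline{M}$ is a quotient of $M$ we have $\mu(\overline{M})\leq\mu(M)$, and combining this with the previous display produces a bound of the stated shape,
$$\h^0(M\otimes_RN)\leq\h^0(M)\mu(N)+\h^0(N)\mu(M)+\h^0\left(\frac{M}{\HH^0_{\fm}(M)}\otimes_R\frac{N}{\HH^0_{\fm}(N)}\right).$$

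There is no deep obstacle here; the one point that requires care is that the natural map $M'\otimes_RN\to M\otimes_RN$ need not be injective — its kernel is a quotient of $\Tor_1^R(\overline{M},N)$ — so one must argue with its image $K$ rather than with $M'\otimes_RN$ itself. Since both modules have finite length this costs nothing, and Lemma \ref{0} is exactly what is needed to bound $\ell(K)$. A minor bookkeeping point is to record that $\h^0(\overline{M}\otimes_RN)$ and the analogous terms appearing in the recursion are genuinely finite, which again holds because $\HH^0_{\fm}$ of a finitely generated $R$-module has finite length.
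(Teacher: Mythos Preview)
Your argument is correct and follows essentially the same route as the paper: peel off $\HH^0_{\fm}(M)$, tensor with $N$, work with the image of the induced map (bounding its length via Lemma~\ref{0}), and then read off the inequality from the left-exact sequence in $\HH^0_{\fm}$. The paper phrases the last step via the long exact sequence of local cohomology (using $\HH^1_{\fm}$ of the image), but your use of left exactness alone is equivalent and slightly cleaner.

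One small point on the ``in particular'' clause: your iteration naturally produces the middle term $\h^0(N)\mu(M)$ (after bounding $\mu(\overline{M})\leq\mu(M)$), whereas the paper's stated inequality has $\h^0(N)\mu(N)$. The paper's own proof in fact reaches $\h^0(N)\mu(\widetilde{M})$ at the intermediate step (written there as $\mu(\widetilde{N})$, evidently a typo) and then bounds it by $\mu(N)$; so the displayed form in the lemma appears to carry a typographical slip, and your version with $\mu(M)$ is the one the argument actually yields.
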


\begin{proof}We may assume neither $M$ nor $N$ are of finite length (see Lemma \ref{0}). We look at
$0\to \HH^0_{\fm}(M)\to M\to \widetilde{M}:=\frac{M}{\HH^0_{\fm}(M)}\to 0$.
Apply $-\otimes_RN$ to it and look at the induced long exact sequence
$$\Tor_1^R(\widetilde{M},N)\to \HH^0_{\fm}(M)\otimes_RN\stackrel{f}\lo M\otimes_RN\to \widetilde{M}\otimes_RN\to 0.$$
The sequences $0\to \ker(f)\to M\otimes_RN\to \widetilde{M}\otimes_RN\to 0 $ and
 $\Tor_1^R(\widetilde{M},N)\to \HH^0_{\fm}(M)\otimes_RN\to \ker(f)\to 0 $ are exact.
 From the second,  $\ell(\ker(f))\leq\ell(\HH^0_{\fm}(M)\otimes_RN)\leq\h^0(M)\mu(N)$, see Lemma \ref{0}.
The first one deduces the  exact sequence
$0\to  \HH^0_{\fm}(\ker(f))\to  \HH^0_{\fm}( M\otimes_RN)\to \HH^0_{\fm}(\widetilde{M}\otimes_RN)\to \HH^1_{\fm}(\ker(f)).$
So,
$\h^0(M\otimes_R N)\leq \h^0(\ker(f))+ \h^0(\widetilde{M}\otimes _RN) = \ell(\ker(f))+ \h^0(\widetilde{M}\otimes _RN) \leq\h^0(M)\mu(N)+\h^0(\widetilde{M}\otimes_R N).$
Repeat this for $N$, we have \[\begin{array}{ll}
\h^0(M\otimes_RN)&\leq\h^0(M)\mu(N)+\h^0(\widetilde{M}\otimes_RN)\\
&\leq\h^0(M)\mu(N)+\h^0(N)\mu(\widetilde{N})+\h^0(\widetilde{M}\otimes_R\widetilde{N})\\
&\stackrel{(\ast)}\leq\h^0(M)\mu(N)+\h^0(N)\mu(N)+\h^0(\widetilde{M}\otimes_R\widetilde{N}),
\end{array}\]where $(\ast)$ follows by applying $(-)\otimes_R R/\fm$ to $N\twoheadrightarrow \widetilde{N}\to 0$
to see that $N/ \fm N\twoheadrightarrow \widetilde{N}/ \fm \widetilde{N}\to 0$. In particular, $\dim(\widetilde{N}/ \fm \widetilde{N})\leq \dim(N/ \fm N)$. This completes the proof.
\end{proof}

By $\h^i(-)$  we mean $\ell(\HH^i_{\fm}(-))$  provided it is finite.
By  $\pd(-)$ we mean  the projective dimension.
We look at the minimal free resolution of $M$:
 $\cdots \to R^{\beta_{i}(M)}\stackrel{f_{i}}\lo  R^{\beta_{i-1}(M)}\to\cdots\to  R^{\beta_{0}(M)}\to M\to 0.$
The $i^{th}$ \textit{syzygy} module of $M$ is $\Syz_i(M) := \ker(f_{i-1})$ for all $i>0$.
The following is in  \cite[Theorem 4.1]{2010} under the  additional assumption that  $R$ is Gorenstein.

\begin{proposition}\label{vector}
Let $R$ be an equi-dimensional and  generalized Cohen-Macaulay local ring,  and $N$ be locally free and of constant rank over the punctured spectrum. If $\pd(M)<\depth
(R)$, then
$\h^0(M\otimes_RN)\leq
\sum_{i=0}^{\pd(M)}\beta_i(M)\h^i(N)$.
\end{proposition}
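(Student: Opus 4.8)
The plan is to induct on $\pd(M)$ and reduce to the base case $\pd(M)=0$ using the minimal free resolution of $M$. First I would dispose of the case where $M$ has finite length, which is immediate from Lemma~\ref{0} together with the fact that $\h^i(N)\geq\h^0(N)=\mu(N)$ is the wrong direction; more honestly, when $\pd(M)=0$ the module $M$ is free of rank $\beta_0(M)$ (here $R$ being generalized Cohen-Macaulay and equi-dimensional makes free modules well-behaved), so $M\otimes_RN\cong N^{\beta_0(M)}$ and $\h^0(M\otimes_RN)=\beta_0(M)\h^0(N)$, giving equality in the asserted bound. The only subtlety at this stage is confirming that $\pd(M)<\depth(R)$ forces $M$ to be free when $\pd(M)=0$; this is trivial, since $\pd(M)=0$ already means $M$ is free.

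For the inductive step, suppose $\pd(M)=p\geq 1$ and write the start of the minimal free resolution as $0\to\Syz_1(M)\to R^{\beta_0(M)}\to M\to 0$, so that $\pd(\Syz_1(M))=p-1<\depth(R)$. Applying $-\otimes_RN$ yields the long exact sequence
\[
\Tor_1^R(M,N)\to \Syz_1(M)\otimes_RN\to N^{\beta_0(M)}\to M\otimes_RN\to 0.
\]
Now I want to pass to $\HH^0_{\fm}(-)$. Break this into short exact sequences $0\to C\to N^{\beta_0(M)}\to M\otimes_RN\to 0$ and $\Tor_1^R(M,N)\to\Syz_1(M)\otimes_RN\to C\to 0$, where $C$ is the image. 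The key point is that $N$ locally free of constant rank over the punctured spectrum makes $\Tor_1^R(M,N)$ supported only at $\fm$, hence of finite length, and likewise $\HH^i_{\fm}(N^{\beta_0(M)})=\HH^i_{\fm}(N)^{\beta_0(M)}$; applying the long exact sequence of local cohomology to the first short exact sequence gives
\[
\HH^0_{\fm}(N)^{\beta_0(M)}\to\HH^0_{\fm}(M\otimes_RN)\to\HH^1_{\fm}(C)\to\HH^1_{\fm}(N)^{\beta_0(M)},
\]
so that $\h^0(M\otimes_RN)\leq\beta_0(M)\h^0(N)+\h^1(C)$. Then I relate $\h^1(C)$ to $\h^1(\Syz_1(M)\otimes_RN)$ using the second short exact sequence: since $\Tor_1^R(M,N)$ has finite length its higher local cohomology vanishes and its $\HH^0_{\fm}$ equals itself, so the long exact sequence gives $\h^1(C)\leq\h^1(\Syz_1(M)\otimes_RN)$ up to a correction by $\h^0$ of a finite-length module — I would need to check the bookkeeping here carefully, but morally $\HH^1_{\fm}(C)\hookrightarrow\HH^1_{\fm}(\Syz_1(M)\otimes_RN)$ once the finite-length tail is absorbed.

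The induction hypothesis applied to $\Syz_1(M)$ then needs to control $\h^i(\Syz_1(M)\otimes_RN)$ for $i\leq 1$, not just $\h^0$; this is where the argument must be set up more carefully than the statement suggests, and I expect this to be the main obstacle. The fix is to prove by induction a stronger statement simultaneously bounding $\h^i(M\otimes_RN)$ for all $i\leq$ some range in terms of $\sum_j\beta_j(M)\h^{i+j}(N)$ — this kind of spectral-sequence-flavored estimate (indeed the hyper-Tor spectral sequence $\HH^p_{\fm}(\Tor_q)\Rightarrow\mathbb{H}$, or equivalently iterating the syzygy sequences) is what produces the sum $\sum_{i=0}^{\pd(M)}\beta_i(M)\h^i(N)$ with the index shift matching syzygy degree to cohomological degree. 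The hypothesis $\pd(M)<\depth(R)$ is exactly what guarantees the relevant $\HH^i_{\fm}$ of free modules vanish in the range being used (since $\depth(R)=\operatorname{depth}$ of any free module), killing the error terms and leaving the clean bound. The generalized Cohen-Macaulayness of $R$ and local freeness of $N$ on the punctured spectrum together guarantee all the $\HH^i_{\fm}(N)$ and all the $\Tor$-modules in play have finite length, so every quantity $\h^i$ appearing is well-defined and finite; I would state this finiteness as a preliminary lemma and then run the double induction.
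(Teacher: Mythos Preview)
Your proposal is correct and follows the same approach as the paper: break the syzygy sequence after tensoring with $N$, use that $\Tor_1^R(M,N)$ has finite length (from local freeness of $N$ on the punctured spectrum) so its higher local cohomology vanishes, and climb the resolution. Two small points where the paper is cleaner than your outline. First, from $0\to\Tor_1^R(M,N)\to\Syz_1(M)\otimes_RN\to C\to 0$ you get $\HH^i_{\fm}(\Syz_1(M)\otimes_RN)\simeq\HH^i_{\fm}(C)$ for all $i\geq 1$, an actual isomorphism (not a containment ``up to a correction''), since $\HH^{\geq 1}_{\fm}$ of a finite-length module is zero; so the bookkeeping you were worried about disappears. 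Second, and this resolves your ``main obstacle'': the paper does not set up a strengthened induction hypothesis at all. It simply iterates the same estimate at each level, proving for every $k$ that
\[
\h^k(\Syz_k(M)\otimes_RN)\ \leq\ \beta_k(M)\,\h^k(N)\ +\ \h^{k+1}(\Syz_{k+1}(M)\otimes_RN),
\]
and chains these inequalities from $k=0$ up to $k=p=\pd(M)$, at which point $\Syz_p(M)=R^{\beta_p(M)}$ and $\h^p(\Syz_p(M)\otimes_RN)=\beta_p(M)\h^p(N)$. So no induction on $\pd(M)$ with a loaded hypothesis is needed; the direct chain already gives $\h^0(M\otimes_RN)\leq\sum_{i=0}^{p}\beta_i(M)\h^i(N)$. (Your proposed strengthening \emph{is} exactly what the paper later proves as Proposition~\ref{vector2}, but for the present statement the iterated chain suffices.)
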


\begin{proof}
Let $p:=\pd(M)$. We may assume  $N$ is not of finite length (see Lemma \ref{0}). The assumptions implies that $N$ is generalized Cohen-Macaulay and of dimension equal to $\dim (R)$.
We look at $0\to \Syz_1(M)\to R^{\beta_{0}(M)}\to M\to 0$. Apply $-\otimes_RN$ to it and look at the induced long exact sequence
$$0\to\Tor_1^R(M,N)\to \Syz_1(M)\otimes_RN\stackrel{f}\lo R^{\beta_{0}(M)}\otimes_RN\to M\otimes_RN\to 0.$$
We have
$0\to \ker(f)\to R^{\beta_{0}}\otimes_R N\to M\otimes_R N\to 0 $ and
 $0\to\Tor_1^R(M,N)\to \Syz_1(M)\otimes_R N\to \ker(f)\to 0.$
Since
$N$ is locally free, $\Tor_1^R(M,N)$ is of finite length. Thus, $\HH^0_{\fm}(\Tor_1^R(M,N))=\Tor_1^R(M,N)$ and $\HH^1_{\fm}(\Tor_1^R(M,N))=0$.
We apply $\Gamma_{\fm}$ to these sequences to deduce the following:\begin{enumerate}
\item[]  $0\to\HH^0_{\fm}(\Tor_1^R(M,N))\to \HH^0_{\fm}(\Syz_1(M)\otimes_R N)\to\HH^0_{\fm}(\ker(f))\to\HH^1_{\fm}(\Tor_1^R(M,N))=0, $
\item[]  $0\to \HH^0_{\fm}(\ker(f))\to \HH^0_{\fm}(R^{\beta_{0}(M)}\otimes_RN)\to \HH^0_{\fm}(M\otimes_RN)\to \HH^1_{\fm}(\ker(f)).$
\end{enumerate}
Also,
$\HH^+_{\fm}(\Syz_1(M)\otimes_RN)\simeq\HH^+_{\fm}(\ker(f))$. We use these to conclude that:
$$
\h^0(M\otimes_RN)\leq \ell(\HH^1_{\fm}(\ker(f)))+\beta_{0}(M)\h^0(N)=\ell(\HH^1_{\fm}(\Syz_1(M)\otimes_RN))+\beta_{0}(M)\h^0(N).
$$
In the same vein, $\ell(\HH^1_{\fm}(\Syz_1(M)\otimes_RN))\leq\ell(\HH^2_{\fm}(\Syz_2(M)\otimes_RN))+\beta_{1}(M)\h^1(N).$
Thus
\[\begin{array}{ll}
\h^0(M\otimes_RN)&\leq \ell(\HH^1_{\fm}(\Syz_1(M)\otimes_RN))+\beta_{0}(M)\h^0(N)\\
&\leq\ell(\HH^2_{\fm}(\Syz_2(M)\otimes_RN))+\beta_{1}(M)\h^1(N)+\beta_{0}(M)\h^0(N).
\end{array}\]  Repeating this, $
\h^0(M\otimes_RN)
\leq\ell(\HH^p_{\fm}(\Syz_{p}(M)\otimes_RN))+\sum_{i=0}^{p-1}\beta_i(M)\h^i(N)=\sum_{i=0}^{p}\beta_i(M)\h^i(N).$
\end{proof}
By $\hdeg(M)$ we mean the \textit{cohomological degree}, see \cite{Wolmer} for its definition.
The following  contains more data than \cite[Theorem 4.2]{2010} via dealing with $\pd(A)=\dim(R)$.

\begin{proposition}\label{cvector}
Let $R$ be a $d$-dimensional regular local ring, $M$ a module  and $N$ be locally free  over the punctured spectrum. Then\begin{equation*}
\h^0(M\otimes_RN)\leq \left\{
\begin{array}{rl}
d\hdeg(M)\hdeg(N) & \  \   \   \   \   \ \  \   \   \   \   \ \text{if }  \pd(M)< d\\
(d+1)\hdeg(M)\hdeg(N)-1 & \  \   \   \   \   \ \  \   \   \   \   \ \text{if } \pd(M)= d
\end{array} \right.
\end{equation*}
\end{proposition}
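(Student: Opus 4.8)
The plan is to split according to whether $\pd M<d$ or $\pd M=d$ (over the regular ring $R$ every finitely generated module has projective dimension at most $d$, so these are exhaustive) and to reduce each case to Proposition \ref{vector} together with standard properties of the cohomological degree. By Lemma \ref{0} one may assume $M$ and $N$ are not of finite length; then $\dim N=\dim R=d$, the lengths $\h^i(N)$ are finite for $0\le i\le d-1$, and since $R$ is a domain its punctured spectrum is connected, so $N$ has constant rank there and Proposition \ref{vector} applies. When $\pd M<d=\depth R$ it gives
$$\h^0(M\otimes_RN)\le\sum_{i=0}^{\pd M}\beta_i(M)\,\h^i(N)\le\hdeg(M)\sum_{i=0}^{d-1}\binom{d}{i}\h^i(N),$$
where I have used the standard Betti-number estimate $\beta_i(M)\le\binom{d}{i}\hdeg(M)$. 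Invoking the defining recursion $\hdeg(N)=\deg(N)+\sum_{i=0}^{d-1}\binom{d-1}{i}\h^i(N)$ and the elementary inequality $\binom{d}{i}=\frac{d}{d-i}\binom{d-1}{i}\le d\binom{d-1}{i}$ (valid for $0\le i\le d-1$), one gets $\sum_{i=0}^{d-1}\binom{d}{i}\h^i(N)\le d\big(\hdeg(N)-\deg(N)\big)\le d\,\hdeg(N)$, hence $\h^0(M\otimes_RN)\le d\,\hdeg(M)\hdeg(N)$.

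For $\pd M=d$ I would pass to $\widetilde M:=M/\HH^0_{\fm}(M)$. This has positive depth, so by Auslander--Buchsbaum $\pd\widetilde M=d-\depth\widetilde M\le d-1<d$; thus the case already handled applies to $\widetilde M$ and yields $\h^0(\widetilde M\otimes_RN)\le d\,\hdeg(\widetilde M)\hdeg(N)\le d\,\hdeg(M)\hdeg(N)$ (using $\hdeg(\widetilde M)\le\hdeg(M)$). Lemma \ref{reduction} now gives $\h^0(M\otimes_RN)\le\h^0(M)\,\mu(N)+\h^0(\widetilde M\otimes_RN)$. Assuming $\dim M\ge1$ (if $\dim M=0$ then $M$ has finite length, $\widetilde M=0$, and Lemma \ref{0} already gives $\h^0(M\otimes_RN)\le\hdeg(M)\hdeg(N)\le(d+1)\hdeg(M)\hdeg(N)-1$), the recursion applied to $M$ gives $\h^0(M)\le\hdeg(M)-\deg(M)\le\hdeg(M)-1$; combined with $\mu(N)\le\hdeg(N)$ and $\hdeg(N)\ge\deg(N)\ge1$ this yields $\h^0(M)\,\mu(N)\le(\hdeg(M)-1)\hdeg(N)\le\hdeg(M)\hdeg(N)-1$, and adding $d\,\hdeg(M)\hdeg(N)$ gives exactly $(d+1)\hdeg(M)\hdeg(N)-1$.

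There is no single hard step in this argument; the care lies in marshalling the right properties of $\hdeg$ — the Betti estimate $\beta_i(M)\le\binom{d}{i}\hdeg(M)$, the recursive formula for $\hdeg$ in terms of $\deg$ and the lower local cohomologies, the relation $\hdeg(M)=\hdeg(M/\HH^0_{\fm}(M))+\ell(\HH^0_{\fm}(M))$, and the inequalities $\mu(-)\le\hdeg(-)$, $\deg(-)\le\hdeg(-)$ — and in observing that it is $\widetilde M$, not $M$, whose projective dimension drops below $\depth R$. I would also note in passing that using the equality $\hdeg(\widetilde M)=\hdeg(M)-\h^0(M)$ rather than just the inequality lets one absorb the term $\h^0(M)\mu(N)$ completely, giving the sharper $\h^0(M\otimes_RN)\le d\,\hdeg(M)\hdeg(N)$ also when $\pd M=d$; but the stated form is all that will be needed later.
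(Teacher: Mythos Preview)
Your proof is correct and follows essentially the same route as the paper: the case $\pd(M)<d$ is exactly \cite[Theorem 4.2]{2010} (which the paper simply cites, while you re-derive it from Proposition~\ref{vector} and the standard $\hdeg$ inequalities), and for $\pd(M)=d$ both you and the paper pass to $\widetilde M=M/\HH^0_{\fm}(M)$, invoke Lemma~\ref{reduction}, and combine $\mu(N)\le\hdeg(N)$ with $\hdeg(\widetilde M)=\hdeg(M)-\h^0(M)$. The only cosmetic difference is how the ``$-1$'' is extracted: the paper uses the strict inequality $\hdeg(\widetilde M)<\hdeg(M)$ and integrality, whereas you use $\h^0(M)\le\hdeg(M)-\deg(M)\le\hdeg(M)-1$; your closing remark that the equality $\hdeg(\widetilde M)=\hdeg(M)-\h^0(M)$ actually yields the uniform bound $d\,\hdeg(M)\hdeg(N)$ is a genuine sharpening not recorded in the paper.
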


\begin{proof}Due to Lemma \ref{0} we can assume that neither $M$ nor $N$ are artinian. The claim in the case  $\pd(M)< d$
is in \cite[Theorem 4.2]{2010}. Suppose $\pd(M)=d$. Since $M$ is not artinian, $M\neq \Gamma_{\fm}(M)$.
We denote $M/ \Gamma_{\fm}(M)$ by $\widetilde{M}$. Note that $\depth(\widetilde{M})>0$. Due to Auslander-Buchsbaum formula,
$\pd(\widetilde{M})< d$. We combine Lemma \ref{reduction} with the first part to see
$$\h^0(M\otimes_RN)\leq\h^0(M)\mu(N)+\h^0(\widetilde{M}\otimes_RN)\leq\h^0(M)\mu(N)+d\hdeg(\widetilde{M})\hdeg(N).$$
Recall from definition that  $\h^0(M)\leq\hdeg(M)$. By \cite[Theorem 1.10]{Wolmer}, $\beta_i(N)\leq\beta_i(k)\hdeg(N)$.
We use this for $i=0$ to see
$\mu(N)\leq\hdeg(N)$. In view of \cite[Proposition 2.8(a)]{Wolmer} we have $\hdeg(\widetilde{M})=\hdeg(M)-\ell(\Gamma_{\fm}(M) )<\hdeg(M)$. We
putt all of these together to see $$
\h^0(M\otimes_RN)\leq\h^0(M)\mu(N)+d\hdeg(\widetilde{M})\hdeg(N)<\hdeg(M)\hdeg(N)+d \hdeg(M) \hdeg(N).$$
The claim is now clear.
\end{proof}

\begin{corollary}\label{1}
Let $R$ be a $d$-dimensional regular local ring. Assume one of the following items hold:
 i) $d=1$,
ii) $d=2$ and $M$ is torsion-free,
iii) $d=3$ and $M$ is  reflexive.
Then $\h^0(M\otimes_RN)<(d+1)\hdeg(M)\hdeg(N)$ for any  finitely generated module $N$.
\end{corollary}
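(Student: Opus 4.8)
The plan is to deduce all three cases from Proposition~\ref{cvector}, the only real subtlety being that that proposition is not symmetric in its two arguments: it bounds $\h^0(A\otimes_RB)$ under the assumption that the \emph{second} factor $B$ is locally free over the punctured spectrum, while the shape of the bound depends on $\pd(A)$. We may clearly assume $M$ and $N$ are nonzero, so that $\hdeg(M)\hdeg(N)\geq 1$, using $\hdeg(-)\geq\deg(-)\geq 1$ on nonzero modules. Over a $d$-dimensional regular local ring one has $\pd(A)\leq d$ for every finitely generated $A$, so both alternatives of Proposition~\ref{cvector} are available; since $\hdeg(A)\hdeg(B)\geq 1$, each of the two bounds $d\hdeg(A)\hdeg(B)$ and $(d+1)\hdeg(A)\hdeg(B)-1$ is strictly smaller than $(d+1)\hdeg(A)\hdeg(B)$. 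Thus
\[
\h^0(A\otimes_RB)<(d+1)\hdeg(A)\hdeg(B)\qquad\text{whenever }B\text{ is locally free over the punctured spectrum.}
\]

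In case (i), $R$ is a discrete valuation ring, its punctured spectrum is $\{(0)\}$, and every finitely generated module becomes a vector space over the fraction field of $R$ --- hence free --- after localizing at $(0)$. In particular $N$ is locally free over the punctured spectrum, and the displayed bound with $A=M$ and $B=N$ is exactly the claim.

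In cases (ii) and (iii) the module $N$ is arbitrary, so instead I would use the symmetry $M\otimes_RN\cong N\otimes_RM$ and apply the displayed bound with $A=N$ and $B=M$; for this one must check that $M$ itself is locally free over the punctured spectrum, and this is where the hypotheses on $M$ enter. For $\fp\neq\fm$ one has $\Ht\fp\leq d-1$, so $R_\fp$ is a regular local ring of dimension at most $d-1$, and $M_\fp$ inherits the hypothesis on $M$ (torsion-freeness, resp. reflexivity, is preserved under localization). If $\dim R_\fp\leq 1$ then $R_\fp$ is a field or a discrete valuation ring, over which a torsion-free --- hence also a reflexive --- finitely generated module is free. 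If $\dim R_\fp=2$ (which occurs only in case (iii)), reflexivity of $M_\fp$ forces $\depth_{R_\fp}M_\fp\geq 2=\dim R_\fp$, so $M_\fp$ is a maximal Cohen-Macaulay module over a regular local ring, hence free by the Auslander-Buchsbaum formula. Therefore $M$ is locally free over the punctured spectrum in both (ii) and (iii), and
\[
\h^0(M\otimes_RN)=\h^0(N\otimes_RM)<(d+1)\hdeg(N)\hdeg(M)=(d+1)\hdeg(M)\hdeg(N),
\]
which is the desired inequality.

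The step I expect to need the most care is precisely this last reduction: one has to notice that the non-symmetry of Proposition~\ref{cvector} forces us to put $M$ --- not the unrestricted module $N$ --- into the slot reserved for the locally free factor, and then to verify that torsion-freeness in dimension $2$ and reflexivity in dimension $3$ are exactly what make $M$ locally free on $\Spec(R)\setminus\{\fm\}$ (these hypotheses would no longer suffice one dimension higher). Everything else is a direct appeal to Proposition~\ref{cvector} together with the elementary bound $\hdeg(-)\geq 1$ for nonzero modules.
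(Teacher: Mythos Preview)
Your proof is correct and follows the same route as the paper: show that $M$ is locally free over the punctured spectrum in each case and then invoke Proposition~\ref{cvector} (using the symmetry $M\otimes_RN\cong N\otimes_RM$ to place $M$ in the locally-free slot). The paper's own proof is the one-line ``It follows that $M$ is locally free; in view of Proposition~\ref{cvector} we get the desired claim,'' so you have simply made explicit the asymmetry of Proposition~\ref{cvector} and the verification that torsion-freeness (resp.\ reflexivity) forces local freeness on the punctured spectrum in dimension $2$ (resp.\ $3$).
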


\begin{proof}
It follows that $M$  is locally free. In view of Proposition \ref{cvector} we get the desired claim.
\end{proof}

The  next result slightly extends \cite[Proposition 3.4]{2010}:

\begin{corollary}\label{1g}
Let $(R,\fm)$ be a $1$-dimensional complete local integral domain containing a field,   $M$ and $N$ be finitely generated. Let $J$ be the Jacobian ideal. Then
$$\h^0(M\otimes_RN)\leq\hdeg(M)\hdeg(N)(2+\deg(R)\ell(\frac{R}{J}))-\rank(M)\rank(N)\deg(R)\ell(\frac{R}{J}).$$In particular,
$\h^0(M\otimes_RN)\leq(2+\deg(R)\ell(\frac{R}{J}))\hdeg(M)\hdeg(N)$.
\end{corollary}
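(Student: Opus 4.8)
The plan is to reduce Corollary \ref{1g} to Corollary \ref{1}(i) together with the description of the Jacobian ideal as the defining ideal of the non-free locus. First I would recall that for a $1$-dimensional complete local integral domain $R$ containing a field, $R$ is regular (hence all finitely generated modules are locally free on the punctured spectrum) precisely when $J = R$; more generally $\V(J)$ is contained in $\{\fm\}$, so $M$ and $N$ are locally free over $\Spec(R)\setminus\{\fm\}$ automatically, but $R$ itself need not be regular. So Corollary \ref{1} does not apply directly, and the point is to pass to the normalization $\overline{R}$ (a regular local ring, since it is a $1$-dimensional normal complete local domain, in fact a DVR) and control the error introduced by $R \hookrightarrow \overline{R}$ via the conductor, whose colength is governed by $\ell(R/J)$ through a Lipman–Zariski / Noether-different type formula (this is exactly the mechanism already used in \cite[Proposition 3.4]{2010}, which the present corollary claims to "slightly extend").

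The key steps, in order, would be: (1) Set $\overline{R}$ for the integral closure, $\fc$ for the conductor ideal, and note $\ell(\overline{R}/\fc) \leq \deg(R)\,\ell(R/J)$ (or the relevant sharp bound from the cited source), using that $R$ contains a field so that the Jacobian/different computation is available. (2) Apply the change of rings along $R \to \overline{R}$: for the $\overline{R}$-modules $\overline{M} := \overline{R}\otimes_R M$ and $\overline{N} := \overline{R}\otimes_R N$, invoke Corollary \ref{1}(i) to get $\h^0(\overline{M}\otimes_{\overline{R}}\overline{N}) < 2\,\hdeg_{\overline{R}}(\overline{M})\,\hdeg_{\overline{R}}(\overline{N})$. (3) Compare $\h^0(M\otimes_R N)$ with $\h^0(\overline{M}\otimes_{\overline{R}}\overline{N})$: the kernel and cokernel of $M\otimes_R N \to \overline{M}\otimes_{\overline{R}}\overline{N}$ are annihilated by a power of $\fc$, hence have length bounded by a multiple of $\ell(R/\fc)$ times $\rank(M)\rank(N)$ (for the generic part) plus contributions measured by $\hdeg$ on the torsion parts — this is where the term $\rank(M)\rank(N)\deg(R)\ell(R/J)$ gets subtracted off, reflecting that the already-torsion-free generic piece contributes no new $\fm$-torsion. (4) Finally, relate $\hdeg_{\overline{R}}(\overline{M})$ to $\hdeg_R(M)$ (again using \cite[Proposition 2.8]{Wolmer}-type inequalities and the fact that $\overline{R}$ is module-finite over $R$ of colength bounded by $\ell(R/\fc)$), and assemble the arithmetic to land on the stated bound; the "in particular" clause is then immediate by dropping the nonpositive subtracted term.

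The main obstacle I expect is step (3)–(4): making the passage to the normalization genuinely quantitative with the \emph{right} constants. One has to track, simultaneously, (a) how much $\fm$-torsion the map $M\otimes_R N \to \overline{M}\otimes_{\overline{R}}\overline{N}$ can create or destroy — controlled by $\Tor^R_1$ and by the conductor — and (b) how $\hdeg$ behaves under $-\otimes_R \overline{R}$, which is not exact, so one needs the $\beta_i \leq \beta_i(k)\hdeg$ inequality from \cite[Theorem 1.10]{Wolmer} to bound the Betti numbers of $\overline{M}$ over $R$ and then transfer. Getting the $\deg(R)$ factor and the precise shape $2 + \deg(R)\ell(R/J)$ rather than a cruder bound is the delicate bookkeeping; everything else is formal. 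If a clean conductor estimate $\ell(\overline{R}/\fc)\le \deg(R)\ell(R/J)$ is not available in the literature in exactly this form, I would instead cite \cite[Proposition 3.4]{2010} directly for the rank-one case and bootstrap to general $M,N$ by filtering by rank-one-generic pieces, at the cost of a slightly longer argument.
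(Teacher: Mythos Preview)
Your primary approach via normalization is not what the paper does, and the obstacles you flag in steps (3)--(4) are more serious than you suggest: there is no clean comparison of $\hdeg_{\overline R}(\overline M)$ with $\hdeg_R(M)$ in the cited sources, and extracting the exact constant $2+\deg(R)\ell(R/J)$ (rather than some cruder multiple) from a conductor argument looks genuinely hard. Your fallback of filtering by rank-one pieces is also off target.

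The paper's route is much more direct and never leaves $R$. First, Lemma~\ref{reduction} reduces to the case where both modules are torsion-free: it gives
\[
\h^0(M\otimes_RN)\leq \h^0(M)\mu(N)+\h^0(N)\mu(N)+\h^0(\widetilde M\otimes_R\widetilde N),
\]
with $\widetilde M=M/\HH^0_\fm(M)$ and $\widetilde N=N/\HH^0_\fm(N)$ torsion-free (hence maximal Cohen--Macaulay, since $\dim R=1$). For the last term one does \emph{not} pass to $\overline R$; instead one feeds into Vasconcelos's own argument from \cite[Proposition~3.4]{2010} the single new ingredient that $J\cdot\Ext^2_R(-,\sim)=0$, which is supplied by Iyengar--Takahashi \cite{i}. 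That argument then yields directly
\[
\h^0(\widetilde M\otimes_R\widetilde N)\leq \bigl(\mu(\widetilde M)\mu(\widetilde N)-\rank(\widetilde M)\rank(\widetilde N)\bigr)\deg(R)\,\ell(R/J).
\]
Finally one uses the elementary inequalities $\mu(\widetilde M)\leq\mu(M)\leq\hdeg(M)$, $\h^0(M)\leq\hdeg(M)$, and $\rank(\widetilde M)=\rank(M)$ to assemble the stated bound. The ``slight extension'' over \cite[Proposition~3.4]{2010} is precisely replacing whatever hypothesis there guaranteed the $\Ext^2$-annihilation by the general Jacobian annihilation from \cite{i}, together with the reduction Lemma~\ref{reduction} to drop the torsion-free assumption on $M,N$. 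No normalization, no base-change bookkeeping, and the constants fall out immediately.
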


\begin{proof}Due to Lemma \ref{0}, we may assume that neither $M$ nor $N$ are artinian. Let $\widetilde{M}:=\frac{M}{\HH^0_{\fm}(M)}$.
This is nonzero and of positive depth. Thus, $\widetilde{M}$ is maximal Cohen-Macaulay. Over any 1-dimensional  reduced local ring, the
category of maximal Cohen-Macaulay modules coincides with the category of torsion free modules. Hence $\widetilde{M}$ and  $\widetilde{N}$ are torsion free.
In view of \cite{i}, we see $J\Ext^2_R(-,\sim)=0$. We combine this with the proof of  \cite[Proposition 3.4]{2010}
to see $\h^0(\widetilde{M}\otimes_R\widetilde{N})\leq\left(\mu(\widetilde{M})\mu(\widetilde{N})-\rank(\widetilde{M})\rank(\widetilde{N})\right)\deg(R)\ell(\frac{R}{J}).$
Recall that $\mu(\widetilde{M})\leq \mu(M)$. Denote the fraction field of $R$ by $Q(R)$. Recall that $\HH^0_{\fm}(M)\otimes_R Q(R)=0$. We apply the exact functor $-\otimes_RQ(R)$ to
$0\to\HH^0_{\fm}(M)\to M \to \widetilde{M}\to 0$ to see the sequence  $0=\HH^0_{\fm}(M)\otimes_R Q(R)\to M\otimes_R Q(R)\to \widetilde{M}\otimes_R Q(R)\to 0$ is exact. From this
$\rank(M)=\rank(\widetilde{M})$. Therefore, $$\h^0(\widetilde{M}\otimes_R\widetilde{N})
\leq\left(\mu(M)\mu(N)-\rank(M)\rank(N)\right)\deg(R)\ell(\frac{R}{J}).$$
In view of Lemma \ref{reduction} we have
 \[\begin{array}{ll}
\h^0(M\otimes_RN)&\leq\h^0(M)\mu(N)+\h^0(N)\mu(N)+\h^0(\widetilde{M}\otimes_R\widetilde{N})\\
&\leq \h^0(M)\mu(N)+\h^0(N)\mu(N)+\left(\mu(M)\mu(N)-\rank(M)\rank(N)\right)\deg(R) \ell(\frac{R}{J})\\
&\leq\hdeg(M)\hdeg(N)(2+\deg(R) \ell(\frac{R}{J}))-\rank(M)\rank(N)\deg(R) \ell(\frac{R}{J}).
\end{array}\]
\end{proof}

Here, the notation $M^\ast$ stands for $\Hom_R(M,R)$.
\begin{proposition}\label{v5}
Let $R$ be a  Gorenstein ring with isolated singularity and $M$ be  maximal Cohen-Macaulay. Then
$\h^0(M\otimes_RN)$ can  estimate in terms of $M$ and $N$.
\end{proposition}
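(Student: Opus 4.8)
The plan is to compare $M\otimes_RN$ with a suitable $\Hom$-module via Auslander's evaluation sequence, and to read off $\HH^0_{\fm}$ from the comparison. By Lemma \ref{0} we may assume $d:=\dim R\geq1$. Since $R$ is Gorenstein with an isolated singularity, $R_{\fp}$ is regular for every $\fp\in\Spec(R)\setminus\{\fm\}$; as $M$ is maximal Cohen--Macaulay, $M_{\fp}$ is then free over $R_{\fp}$, so $M$ is locally free on the punctured spectrum, and the same holds for $L:=M^{\ast}=\Hom_R(M,R)$. Moreover, being maximal Cohen--Macaulay over a Gorenstein ring, $M$ is reflexive and $L$ is again maximal Cohen--Macaulay with $L^{\ast}\cong M$; hence $M\otimes_RN\cong L^{\ast}\otimes_RN$, and it is enough to estimate $\h^0(L^{\ast}\otimes_RN)$.

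Next I would fix a finite free presentation $R^{a}\to R^{b}\to L\to0$ with $b=\mu(L)$, apply $-\otimes_RN$ and $\Hom_R(-,N)$, and assemble (from the two evident short exact sequences produced by the dualized presentation) the standard four-term exact sequence of Auslander
\[
0\longrightarrow \Tor_2^{R}(\operatorname{Tr}L,N)\longrightarrow L^{\ast}\otimes_RN\stackrel{\theta}{\longrightarrow} \Hom_R(L,N)\longrightarrow \Tor_1^{R}(\operatorname{Tr}L,N)\longrightarrow0,
\]
where $\theta$ is the evaluation map $f\otimes n\mapsto(x\mapsto f(x)n)$ and $\operatorname{Tr}L$ is the Auslander transpose. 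The key observation is that $\operatorname{Tr}L$ is locally free on the punctured spectrum: for $\fp\neq\fm$ the module $(\operatorname{Tr}L)_{\fp}$ is the transpose of the free module $L_{\fp}$, hence free. Consequently $\Tor_i^{R}(\operatorname{Tr}L,N)$ is annihilated by a power of $\fm$, i.e.\ of finite length, for $i=1,2$.

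It remains to pass to local cohomology. Split the sequence into $0\to \Tor_2^{R}(\operatorname{Tr}L,N)\to L^{\ast}\otimes_RN\to\im\theta\to0$ and $0\to\im\theta\to\Hom_R(L,N)\to \Tor_1^{R}(\operatorname{Tr}L,N)\to0$. Applying $\Gamma_{\fm}$ to the first and using that $\Tor_2^{R}(\operatorname{Tr}L,N)$ has finite length gives
\[
\h^0(M\otimes_RN)\leq \ell\!\left(\Tor_2^{R}(\operatorname{Tr}L,N)\right)+\ell\!\left(\HH^0_{\fm}(\im\theta)\right),
\]
while applying it to the second gives $\ell(\HH^0_{\fm}(\im\theta))\leq\ell(\HH^0_{\fm}(\Hom_R(L,N)))$. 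Since the chosen presentation embeds $\Hom_R(L,N)$ into $N^{b}$, left exactness of $\Gamma_{\fm}$ yields $\ell(\HH^0_{\fm}(\Hom_R(L,N)))\leq\mu(L)\,\h^0(N)$. Altogether
\[
\h^0(M\otimes_RN)\leq \ell\!\left(\Tor_2^{R}(\operatorname{Tr}(M^{\ast}),N)\right)+\mu(M^{\ast})\,\h^0(N),
\]
a bound depending only on $M$ and $N$, which is the assertion.

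The heart of the matter — and the only genuine use of the isolated singularity hypothesis — is the finiteness of the two $\Tor$-terms in Auslander's sequence; without it $\operatorname{Tr}(M^{\ast})$ need not be locally free away from $\fm$ and the argument collapses. The remaining points (the construction of the four-term sequence, and the compatibility of $\operatorname{Tr}$ with localization) are routine but should be recorded carefully. One could further try to bound $\ell(\Tor_2^{R}(\operatorname{Tr}(M^{\ast}),N))$ in terms of, say, Betti numbers of $M^{\ast}$ and $N$ to obtain a more transparent estimate, but for the statement as posed any finite bound in terms of $M$ and $N$ suffices.
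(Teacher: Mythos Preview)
Your argument is correct and follows the same line as the paper: both invoke Auslander's evaluation sequence
\[
\Tor_2^R(D(M^*),N)\lo M^{**}\otimes_RN\lo\Hom_R(M^*,N)\lo\Tor_1^R(D(M^*),N)\lo 0,
\]
identify $M^{**}\otimes_RN\cong M\otimes_RN$ via reflexivity of the maximal Cohen--Macaulay module $M$, and use the isolated singularity to force the outer $\Tor$ terms to have finite length (your observation that the leftmost map is in fact injective is correct and slightly sharper than what the paper records). The only tactical difference lies in how the $\Hom$ term is handled: the paper first passes from $N$ to $\widetilde N=N/\Gamma_{\fm}(N)$ via Lemma~\ref{reduction}, so that $\Hom_R(M^*,\widetilde N)$ has positive depth and contributes nothing to $\HH^0_{\fm}$, arriving at $\h^0(M\otimes_RN)\leq\ell(\Tor_2^R(D(M^*),\widetilde N))$ after the reduction; you instead keep $N$ and bound $\h^0(\Hom_R(M^*,N))\leq\mu(M^*)\,\h^0(N)$ directly via the embedding $\Hom_R(M^*,N)\hookrightarrow N^{\mu(M^*)}$. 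Your route avoids the preliminary reduction at the price of carrying the extra summand $\mu(M^*)\,\h^0(N)$ in the final estimate, but the two bounds have the same shape and the underlying mechanism is identical.
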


\begin{proof}Maximal Cohen-Macaulay modules over Gorenstein rings are reflexive, e.g., $M$ is reflexive. We may assume  $N$ is not of finite length (see Lemma \ref{0}). In view of Lemma
\ref{reduction},
we may replace $N$ with $N/ \Gamma_{\fm}(N)$ and assume in addition that $\depth(N)>0$. This implies that
$\Hom_R(-,N)$ has positive depth provided $\Hom_R(-,N)\neq 0$. Let $D(-)$ be the Auslander's transpose. We look at the exact sequence
$$\Tor^R_2 (D(M^{\ast}),N)\stackrel{f}\lo M^{\ast\ast}\otimes _RN\stackrel{g}\lo\Hom_R(M^{\ast},N)\stackrel{h}\lo\Tor^R_1 (D(M^{\ast}),N)\to 0.$$
Without loss of the generality we can assume that $\Hom_R(-,N)\neq 0$. Note that $M^{\ast}$ is   maximal Cohen-Macaulay and so locally free over punctured spectrum. Since $D(-)$ behaves nicely
with respect to localization,
we see that $D(M^{\ast})$ is of finite length. Hence $\Tor^R_2 (D(M^{\ast}),N)$ is of finite length. Due to $\Tor^R_2 (D(M^{\ast}),N)\twoheadrightarrow \im(f)\to 0$
we see $\im(f)$ is of finite length. We have the following
exact sequences  $0\to\ker(h)\to\Hom_R(M^{\ast},N)\to\Tor^R_1 (D(M^{\ast}),N)\to 0 $ and
$0\to\ker(g)\to M^{\ast\ast}\otimes_R N\to\ker(h)\to 0.$ Also,
$\Tor^R_2 (D(M^{\ast}),N)\twoheadrightarrow\im(f)=\ker(g).$
Since $\depth(\Hom(M^{\ast},N))>0$ the first sequence says that $\depth(\ker(h))>0$.
From the second sequence we have $\h^0(M\otimes_RN)=\h^0(\ker(g)).$ From the third, we have
$\h^0(\ker(g))=\ell(\im(f))\leq\ell(\Tor^R_2 (D(M^{\ast}),N))$. In sum,
$$\h^0(M\otimes_RN)\leq\ell(\Tor^R_2 (D(M^{\ast}),N))\leq \beta_2(N)\ell(D(M^{\ast})),$$as claimed
\end{proof}

\begin{proposition}\label{buchs}
Let $(R,\fm)$ be a Cohen-Macaulay local ring of dimension $d>1$, $M$ be perfect of projective dimension one  and $N$ be Buchsbaum
of dimension $d$. Then
$\h^0(M\otimes_RN)<
3\hdeg(M)\hdeg(N)$. Suppose in addition that
$\depth(N)>0$. Then $\h^0(M\otimes_RN)\leq
2\hdeg(M)\hdeg(N)$.
\end{proposition}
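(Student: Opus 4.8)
The plan is to reduce to the case $\depth(N)>0$ and then exploit the Hilbert--Burch shape of the perfect module $M$.

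\emph{Reduction.} Interchanging $M$ and $N$ in Lemma~\ref{reduction} gives
$\h^0(M\otimes_RN)\le\h^0(N)\mu(M)+\h^0(M\otimes_R\widetilde N)$, where $\widetilde N:=N/\HH^0_{\fm}(N)$ is again Buchsbaum of dimension $d$ and now has $\depth(\widetilde N)>0$ (since $\HH^i_{\fm}(\widetilde N)\cong\HH^i_{\fm}(N)$ for $i\ge1$ while $\HH^0_{\fm}(\widetilde N)=0$). Using $\mu(M)=\beta_0(M)\le\hdeg(M)$, $\h^0(N)\le\hdeg(N)$, and $\hdeg(\widetilde N)=\hdeg(N)-\ell(\HH^0_{\fm}(N))$ (\cite[Theorem 1.10]{Wolmer}, \cite[Proposition 2.8(a)]{Wolmer}), the proposition will follow once I prove
\[
\h^0(M\otimes_RN)\ \le\ \hdeg(M)\,\hdeg(N)\qquad\text{whenever }\depth(N)>0:
\]
for such $N$ this already yields the second assertion, and for an arbitrary $N$ it gives $\h^0(M\otimes_RN)\le\hdeg(M)\hdeg(N)+\hdeg(M)\hdeg(\widetilde N)<3\hdeg(M)\hdeg(N)$ strictly, since either $\depth(N)>0$ (so the second summand is absent) or $\HH^0_{\fm}(N)\ne0$ (so $\hdeg(\widetilde N)<\hdeg(N)$).

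\emph{The positive-depth case.} The key structural input is that a Buchsbaum module of dimension $d=\dim R$ with positive depth is unmixed. Indeed, if $\fp\in\Ass(N)$ had coheight $i$ with $1\le i<\dim N$, then, writing $R=S/I$ with $S$ Gorenstein, local duality shows $\fp$ is an associated prime of the deficiency module $\Ext^{\dim S-i}_{S}(N,S)$, so that module has positive dimension and $\HH^{i}_{\fm}(N)$ is of infinite length, contradicting the Buchsbaum hypothesis; and $\fp=\fm$ is excluded by $\depth(N)>0$. Hence $\Ass(N)\subseteq\Ass(R)$. Now let $0\to R^{b}\stackrel{\varphi}\lo R^{b}\to M\to0$ be the minimal free resolution of the (torsion, perfect) module $M$, and set $f:=\det\varphi$. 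Since $M$ is perfect of grade one, $\operatorname{Fitt}_0(M)=(f)$ has grade one, so $f$ is a nonzerodivisor on $R$ and therefore, by unmixedness, on $N$. Tensoring the resolution with $N$ and using $\operatorname{adj}(\varphi)\,\varphi=fI_{b}$ gives $\Tor_1^R(M,N)=\ker\!\big(\bar\varphi\colon N^{b}\to N^{b}\big)\subseteq(0:_{N}f)^{b}=0$; hence $0\to N^{b}\stackrel{\bar\varphi}\lo N^{b}\to M\otimes_RN\to0$ is exact.

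\emph{Conclusion.} Applying $\Gamma_{\fm}$ to this last sequence and using $\HH^0_{\fm}(N^{b})=0$ produces an injection $\HH^0_{\fm}(M\otimes_RN)\hookrightarrow\HH^1_{\fm}(N)^{b}$; since $d>1$, $\HH^1_{\fm}(N)$ has finite length, and therefore $\h^0(M\otimes_RN)\le b\,\h^1(N)=\mu(M)\,\h^1(N)\le\hdeg(M)\,\hdeg(N)$, the last inequality using $\mu(M)\le\hdeg(M)$ and $\h^1(N)\le\hdeg(N)$ (valid since $N$ has finite local cohomology and $1<\dim N$). This gives the reduced inequality, hence the proposition. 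I expect the hard part to be the unmixedness input of the middle step: it is exactly what forces $\Tor_1^R(M,N)$ to vanish, and without it one would instead have to bound the length of $\HH^2_{\fm}(\Tor_1^R(M,N))$ --- the obstruction term of the four-term sequence $0\to\Tor_1^R(M,N)\to N^{b}\stackrel{\bar\varphi}\lo N^{b}\to M\otimes_RN\to0$ --- which, although $f$-torsion, need not have finite-length local cohomology; the rest is routine cohomological-degree bookkeeping.
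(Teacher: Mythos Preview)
Your proof is correct, and its overall architecture matches the paper's: both reduce via Lemma~\ref{reduction} to the case $\depth(N)>0$ and then finish with the standard bounds $\mu(M)\le\hdeg(M)$ and $\h^{<d}(N)\le\hdeg(N)$. The substantive difference lies in how the positive-depth case is handled. The paper simply invokes Yoshida's formula \cite[Proposition~2.7]{yosh}, namely $\h^0(M\otimes_R\widetilde N)=\mu(M)\bigl(\h^0(\widetilde N)+\h^1(\widetilde N)\bigr)$, as a black box. You instead reprove the needed inequality from scratch: using the square presentation $0\to R^{b}\stackrel{\varphi}{\to}R^{b}\to M\to0$ of a grade-one perfect module, you observe that $f=\det\varphi$ is a nonzerodivisor on $R$ and --- via unmixedness of the Buchsbaum module $\widetilde N$ --- also on $\widetilde N$, so the adjugate identity kills $\Tor_1^R(M,\widetilde N)$ and the local-cohomology sequence gives $\HH^0_{\fm}(M\otimes_R\widetilde N)\hookrightarrow\HH^1_{\fm}(\widetilde N)^{b}$. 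This is more self-contained (it is essentially how Yoshida's equality is proved, minus the Buchsbaum step that makes the map on $\HH^1_{\fm}$ zero) and, with your bookkeeping, yields the sharper bound $\h^0(M\otimes_RN)\le\hdeg(M)\hdeg(N)$ when $\depth(N)>0$; the paper records only $2\hdeg(M)\hdeg(N)$ there because it bounds $\h^0(\widetilde N)+\h^1(\widetilde N)$ by $2\hdeg(\widetilde N)$ without using $\h^0(\widetilde N)=0$.

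Two minor points. First, your local-duality argument for unmixedness presupposes a Gorenstein cover $R=S/I$; say explicitly that one passes to the completion (local-cohomology lengths are preserved) before invoking Cohen's structure theorem. Second, in the reduction paragraph your case split is mislabeled: when $\depth(N)>0$ it is the \emph{first} summand $\h^0(N)\mu(M)$ that vanishes, not the second, and strictness of $<3\hdeg(M)\hdeg(N)$ then follows because $\hdeg(M)\hdeg(N)>0$ for nonzero modules. Neither issue affects the validity of the argument.
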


\begin{proof}
 Let $\widetilde{N}:=\frac{N}{\HH^0_{\fm}(N)}$. In view of \cite[Proposition I.2.22]{bus}, $\widetilde{N}$ is Buchsbaum. Since $\dim(N)=d>0$, we deuce that $\widetilde{N}\neq 0$.
It follows by definition that $\depth(\widetilde{N})>0$,  $\HH^+_{\fm}(\widetilde{N})\simeq\HH^+_{\fm}(N)$ and
that $\dim(N)=\dim(\widetilde{N})$. Recall from \cite[Proposition 2.7]{yosh}:
\begin{enumerate}
\item[Fact A)]  Let $A$ be a Cohen-Macaulay local ring of dimension $d>1$ and  $P$ be perfect of depth one.
If $Q$ is  Buchsbaum of positive depth and maximal dimension,
then $\h^0(P\otimes_A Q)= \mu(P)(\h^0(Q) + \h^1(Q))$.
\end{enumerate} Recall that $\hdeg(\widetilde{N})=\hdeg(N)-\ell(\Gamma_{\fm}(N) )$, $\mu(-)\leq\hdeg(-)$ and that $\h^{<d}(-)\leq\hdeg(-)$.
In view of Lemma \ref{reduction} we have
 \[\begin{array}{ll}
\h^0(M\otimes_RN)&\leq\h^0(N)\mu(M)+\h^0(M\otimes_R\widetilde{N})\\
&= \h^0(N)\mu(M)+\mu(M)(\h^0(\widetilde{N}) + \h^1(\widetilde{N}))\\
&\leq\hdeg(M)\hdeg(N)+2\hdeg(M)\hdeg(\widetilde{N})\\
&=\hdeg(M)\hdeg(N)+2\hdeg(M)(\hdeg(N)-\Gamma_{\fm}(N) )\\
&\leq3\hdeg(M)\hdeg(N),
\end{array}\]and we remark that if $\Gamma_{\fm}(N)\neq 0$, then the last inequality
is strict. This completes the proof.
\end{proof}

Let $R$ be a 3-dimensional regular local ring, $M$ and $N$ be torsionfree. Theorem 6.1 in \cite{2010} says that
$\h^0(M\otimes_RN)<4 \hdeg(M) \hdeg(N)$. I feel that its   proof  says:

\begin{fact}\label{3ht}
Let  $(R,\fm)$  be a 3-dimensional regular local ring, $M$ and $N$ be torsionfree. Then
$\h^0(M\otimes_RN)<16 \hdeg(M)  \hdeg(N).$
\end{fact}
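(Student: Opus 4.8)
The plan is to run the same syzygy-reduction argument that proves Proposition \ref{vector}, but now keeping \emph{two} syzygies in play (one for $M$ and one for $N$) and using the dimension $d=3$ together with the torsion-freeness hypotheses to control where the machine stops. First I would reduce, via Lemma \ref{0}, to the case that neither $M$ nor $N$ is artinian, and then, via Lemma \ref{reduction}, peel off $\HH^0_{\fm}(M)$ and $\HH^0_{\fm}(N)$; since over a $3$-dimensional regular local ring a module of positive depth is automatically second syzygy, i.e. reflexive, and in particular locally free on the punctured spectrum (this is exactly Corollary \ref{1}(iii)), the surviving modules $\widetilde M, \widetilde N$ are reflexive and locally free over $\Spec(R)\setminus\{\fm\}$. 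The cost of this reduction is the additive term $\h^0(M)\mu(N)+\h^0(N)\mu(N)\le \hdeg(M)\hdeg(N)+\hdeg(N)^2$ coming from Lemma \ref{reduction}, plus the fact (used repeatedly in \S 2) that $\mu(-)\le\hdeg(-)$ and $\h^0(-)\le\hdeg(-)$.

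Next I would attack $\h^0(\widetilde M\otimes_R\widetilde N)$ with the two-sided syzygy dévissage. Resolve $\widetilde M$ by $0\to\Syz_1(\widetilde M)\to R^{\beta_0}\to\widetilde M\to 0$; tensoring with $\widetilde N$ and applying $\Gamma_{\fm}$, exactly as in Proposition \ref{vector}, gives $\h^0(\widetilde M\otimes_R\widetilde N)\le \beta_0(\widetilde M)\h^0(\widetilde N)+\ell(\HH^1_{\fm}(\Syz_1(\widetilde M)\otimes_R\widetilde N))$, the $\Tor_1$ term being of finite length because $\widetilde N$ is locally free on the punctured spectrum. One more step pushes this to an $\HH^2_{\fm}$ term of $\Syz_2(\widetilde M)\otimes_R\widetilde N$; but $\Syz_2(\widetilde M)$ is a second syzygy over a $3$-dimensional regular ring, hence of depth $\ge 2$, and since $\widetilde M$ is reflexive we can in fact take $\Syz_2(\widetilde M)=\Syz_2(R^c/\widetilde M^{\ast}\text{-type resolution})$ to be a third syzygy and so free, or at any rate of finite projective dimension $\le 1$. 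The cleaner route is to symmetrize: write $P:=\Syz_1(\widetilde M)$, which is a torsion-free (indeed reflexive) module with $\pd P\le 1$ and which is locally free on the punctured spectrum, and then peel a syzygy off $\widetilde N$ against $P$, so that $\ell(\HH^1_{\fm}(P\otimes_R\widetilde N))$ is bounded by $\beta_0(\widetilde N)\h^1(P)+\ell(\HH^2_{\fm}(P\otimes_R\Syz_1(\widetilde N)))$, and now $P\otimes_R\Syz_1(\widetilde N)$ has depth $\ge 2$ once we know $\Syz_1(\widetilde N)$ has depth $\ge 2$ — which holds because $\widetilde N$ is reflexive, hence a second syzygy — forcing the $\HH^0_{\fm}$ and $\HH^1_{\fm}$ of it to vanish after one further reduction, and the $\HH^2_{\fm}$ to be controlled by $\hdeg$ of the pieces.

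The quantitative bookkeeping then runs as follows. Each syzygy step replaces an $\h^i$ term by $\beta_{i}\,\h^i(\text{other factor})+(\text{next }\h^{i+1}\text{ term})$, and one invokes \cite[Theorem 1.10]{Wolmer} in the form $\beta_i(-)\le\beta_i(k)\hdeg(-)=\binom{d}{i}\hdeg(-)$, together with $\h^i(-)\le\hdeg(-)$ for $i<d$ and, at the top, $\h^d$ of a module of depth $\ge d$ being essentially its $\hdeg$ up to the lower-dimensional corrections. Running the $d=3$ machine with two nontrivial syzygy steps produces a sum $\sum_i \beta_i(\widetilde M)\beta_j(\widetilde N)\hdeg(\cdot)$ whose total coefficient, after absorbing the $\mu\le\hdeg$ substitutions and the Lemma \ref{reduction} overhead, is at most $16$; keeping the inequalities strict wherever a $\Gamma_{\fm}$ term or a ``not artinian'' reduction was genuinely used gives the strict bound $\h^0(M\otimes_RN)<16\,\hdeg(M)\hdeg(N)$.

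The main obstacle will be the honest accounting of the constant: one has to track exactly which syzygy modules acquire sufficient depth to kill the low local cohomology, make sure the $\Tor_i$-finite-length claims hold (they do, since after the reductions one factor is locally free on the punctured spectrum and the relevant syzygies inherit this), and verify that the product of the binomial-type Betti bounds $\binom{3}{0},\binom{3}{1}$ with the two Lemma \ref{reduction} penalties does not exceed $16$ — it is precisely here that being generous (this is why the statement reads $16$ rather than the sharp $4$ of \cite[Theorem 6.1]{2010}) buys a short, clean proof. A secondary subtlety is that $\Syz_1(\widetilde M)$, while of projective dimension $\le 1$, need not itself be reflexive unless $\depth R=3$; but depth $\ge 2$ is all the argument needs, and that is automatic. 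Hence no step is genuinely hard; the work is entirely in the organized counting.
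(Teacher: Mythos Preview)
Your argument has a genuine gap at the very first move. Since $M$ and $N$ are already torsion-free, $\HH^0_{\fm}(M)=\HH^0_{\fm}(N)=0$, so Lemma~\ref{reduction} does nothing: $\widetilde M=M$ and $\widetilde N=N$. You then assert that ``over a $3$-dimensional regular local ring a module of positive depth is automatically second syzygy, i.e.\ reflexive, and in particular locally free on the punctured spectrum (this is exactly Corollary~\ref{1}(iii))''. This is false, and it is also a misreading of Corollary~\ref{1}(iii): reflexivity is a \emph{hypothesis} there, not a conclusion. Positive depth only gives $(\Se_1)$, i.e.\ first syzygy, not $(\Se_2)$. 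Concretely, take $R=k[[x,y,z]]$ and $M=(x,y)$: this ideal is torsion-free, but at the height-two prime $\fp=(x,y)$ the localization $M_{\fp}$ is the maximal ideal of the $2$-dimensional regular local ring $R_{\fp}$, which is not free. Hence $M$ is neither reflexive nor locally free on the punctured spectrum, the $\Tor$-modules in your d\'evissage are not of finite length, and the Proposition~\ref{vector}-style machine does not start.

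The paper's proof confronts exactly this obstruction by passing to the reflexive hull. One sets $C:=\coker(M\to M^{\ast\ast})$ and uses Vasconcelos's identities $\h^0(M\otimes_RN)\leq 3\hdeg(M^{\ast\ast})\hdeg(N)+\h^0(\Tor_1(M,N))$, $\h^0(\Tor_1(M,N))\leq\h^0(\Syz_1(N)\otimes_RC)$, $\hdeg(\Syz_1(N))<4\hdeg(N)$, and $\hdeg(M)=\hdeg(M^{\ast\ast})+\hdeg(C)$. The point is that $M^{\ast\ast}$ \emph{is} reflexive, hence locally free on the punctured spectrum of the $3$-dimensional regular ring, so Proposition~\ref{cvector} applies to it and to the pair $(\Syz_1(N),C)$; the resulting constants $3$ and $4\cdot4$ combine, after inflating $3$ to $16$, to give $16\hdeg(M)\hdeg(N)$ via the additivity $\hdeg(M)=\hdeg(M^{\ast\ast})+\hdeg(C)$. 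Your syzygy bookkeeping could perhaps be salvaged, but only after you insert this reflexive-hull step; without it the argument does not go through.
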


\begin{proof}
Let $C:=\coker(M\to M^{\ast\ast})$.
Vasconcelos proved that\begin{enumerate}
		\item[i) ] $\h^0(M\otimes_RN)\leq 3\hdeg(M^{**})  \hdeg(N) +\h^0(\Tor_1(M,N))$
		\item[ii) ] $\h^0(\Tor_1(M,N))\leq\h^0(\Syz_1(N)\otimes_RC) $
\item[ iii)] $\hdeg(\Syz_1(N))<4\hdeg(N)$
\item[ iv)]$\hdeg(M)=\hdeg(M^{**})+\hdeg(C)$.
	\end{enumerate}
We put things together to see that
 \[\begin{array}{ll}
\h^0(M\otimes_RN)&\leq  3\hdeg(M^{**})  \hdeg(N) +\h^0(\Tor_1(M,N)) \ \ \ \ \  \ \ \ \ \ \ \ \ \ \   \ \ \ \ \  \ \ \ \ \ \ \ \ \ \  \ \ \ \ \ \ \  \ \ \ \ \  (i) \\
&\leq  3\hdeg(M^{**})  \hdeg(N) + \h^0(\Syz_1(N)\otimes_RC)\ \ \  \ \ \ \ \ \ \ \  \ \ \  \ \ \  \ \ \ \ \  \ \ \ \ \ \ \ \ \ \   \ \ \ \ \      (ii)\\
&<  3\hdeg(M^{**})  \hdeg(N) + 4\hdeg(\Syz_1(N))\hdeg(C)  \ \ \  \ \ \ \ \  \ \ \ \ \ \ \ \ \ \   \ \ \ \ \      \ref{cvector}\\
&<  3\hdeg(M^{**})  \hdeg(N)+16\hdeg(N)\hdeg(C) \ \ \ \ \ \ \ \ \ \  \ \ \ \ \ \ \ \ \ \  \ \ \ \ \ \ \ \ \ \   \ \   (iii)\\
&<  16\hdeg(M^{**})  \hdeg(N)+16\hdeg(N)\hdeg(C)\\
&=  16\hdeg(M) \hdeg(N)\ \ \ \ \  \ \ \ \ \ \ \ \ \ \   \ \ \ \ \  \ \ \ \ \ \ \ \ \ \  \ \ \ \ \ \ \ \ \ \   \ \ \ \ \  \ \ \ \ \ \ \ \ \ \  \ \ \ \ \ \ \ \ \ \   \ \ \ \ \  \ \ \ \ \ (iv),
\end{array}\]as claimed.
\end{proof}

\begin{remark}
	Suppose	\( \pd(B) = 1 \).
	Then
	\[
	h^0(A \otimes B) \leq h^0\!\left( \frac{A}{\tor(A)} \otimes B \right) + h^0\!\left( {\tor(A)} \otimes B \right).
	\]In particular,  	$h^0(- \otimes B) $ is reduced to compute over torsion and torsion-free modules.
\end{remark}

\begin{proof}
Apply \( -\otimes B \) to \( 0 \to \tor(A) \to A \to \frac{A}{\tor(A)} \to 0 \). This gives
	\[
	\Tor_1(\frac{A}{\tor(A)}, B) \to \tor(A) \otimes B \to A \otimes B \to \frac{A}{\tor(A)} \otimes B \to 0.
	\]
But \( \frac{A}{\tor(A)} \) is torsion-free, so there exists a free module $F$ and the following exact sequence \[0\to \frac{A}{\tor(A)}\to F\to \Omega^{-1} (\frac{A}{\tor(A)} ) \to 0, \] 
	so \( \Tor_1^R(\frac{A}{\tor(A)} , B) = \Tor_2^R(\Omega^{-1}(  \frac{A}{\tor(A)}), B) = 0 \), as
	\(
	\pd(B) \leq 1.
	\) Applying the local cohomology gives us
	\[
	0 \to H^0_{\fm}(\tor(A) \otimes B) \to H^0_{\fm}(A \otimes B) \to H^0_{\fm}(\frac{A}{\tor(A)} \otimes B).   
	\] This gives 
	\[
	h^0(A \otimes B) \leq h^0\!\left( \frac{A}{\tor(A)} \otimes B \right) + h^0\!\left( {\tor(A)} \otimes B \right).
	\]\end{proof}

\begin{remark}
	We adopt the following assumptions:
	\begin{enumerate}
		\item $R$ is $3$-dimensional and normal.
		\item $A$ is torsion and $H^1_{\mathfrak{m}}(A)$ is finite.
		\item $B$ is torsion-free and $\operatorname{pd}(B) = 1$.
	\end{enumerate}
	Then
	\[
	h^0(A \otimes B) \;\leq\; h^0\!\bigl( A^{\beta_0(B)} \bigr) \;+\; h^1\!\bigl( A^{\beta_1(B)} \bigr).
	\]
\end{remark}

\begin{proof}
	Set $\beta_0(B) = n$ and $\beta_1(B) = m$.
	From the exact sequence \(0 \to R^m \to R^n \to B \to 0\) we obtain
	\[
	0 \to \operatorname{Tor}^R_1(A, B) \to A^m \to A^n \to A \otimes B \to 0.
	\]
	This splits into two short exact sequences:
	\begin{enumerate}
		\item \(0 \to \operatorname{Tor}^R_1(A, B) \to A^m \to C \to 0\);
		\item \(0 \to C \to A^n \to A \otimes B \to 0\).
	\end{enumerate}
	From (1) we obtain the local cohomology exact sequence
	\[
	H^1_{\mathfrak{m}}(A^m) \to H^1_{\mathfrak{m}}(C) \to H^2_{\mathfrak{m}}(\operatorname{Tor}^R_1(A, B)).
	\]
	From (2) we obtain
	\[
	H^0_{\mathfrak{m}}(A^n) \to H^0_{\mathfrak{m}}(A \otimes B) \to H^1_{\mathfrak{m}}(C).
	\]
	Hence,
	\[
	h^0(A \otimes B) \;\leq\; h^0(A^n) + h^1(C).
	\]
	
	\paragraph{Claim:} \(\dim \operatorname{Tor}^R_1(A, B) \leq 1\).
	
	\paragraph{Proof of the claim:}
	Suppose otherwise. Then there exists a prime ideal \(\mathfrak{p} \in \operatorname{Supp}(\operatorname{Tor}_1(A, B))\) such that \(\dim R/\mathfrak{p} \geq 2\). Consequently, \(\operatorname{ht}(\mathfrak{p}) = 1\).
	In this case, the localization \(R_{\mathfrak{p}}\) is a discrete valuation ring. Since \(B_{\mathfrak{p}}\) is torsion-free, it follows that \(B_{\mathfrak{p}}\) is free over \(R_{\mathfrak{p}}\).
	Therefore,
	\[
	\operatorname{Tor}^R_1(A, B)_{\mathfrak{p}} = \operatorname{Tor}^{R_{\mathfrak{p}}}_1(A_{\mathfrak{p}}, B_{\mathfrak{p}}) = 0,
	\]
	which contradicts the choice of \(\mathfrak{p}\) in the support of \(\operatorname{Tor}_1(A, B)\). This establishes the claim.
	
	Applying the claim to the earlier exact sequence and using Grothendieck's vanishing theorem, we obtain \(H^2_{\mathfrak{m}}(\operatorname{Tor}_1(A, B)) = 0\). Hence the sequence becomes
	\[
	H^1_{\mathfrak{m}}(A^m) \to H^1_{\mathfrak{m}}(C) \to 0,
	\]
	so that \(h^1(C) \leq h^1(A^m)\).
Combining this inequality with \(h^0(A \otimes B) \leq h^0(A^n) + h^1(C)\) yields the desired bound.
\end{proof}

\section{Toward sharpening the  bound  on $\h^0(M\otimes_R M)$}

We look at   $M$ with a presentation of the form
$0\to R^n\stackrel{\varphi}\lo R^{n+d-1} \to M\to 0$ where $d=\dim R$.  Recall that \cite[Question 8.1]{2010} deals with the sharpness of  $$\h^0 (M\otimes_RM)\leq d\left((d - 1)\deg(M) +
\ell(\frac{R}{I_n(\varphi)})\right)^2\quad(\ast)$$
Suppose $d=2$ and $n=1$. Let us repeat the assumption: $M$ has a presentation of the form
$0\to R\stackrel{\varphi}\lo R^{2} \to M\to 0$ where the ideal $I_1(\varphi)$
 is $\fm$-primary. Then the validity of bound $(\ast)$  simplifies to the validity of $$\h^0 (M\otimes_RM)\leq 2(\deg(M) + \ell(\frac{R}{I_1(\varphi)}))^2.$$ We start by looking at a situation for which $\ell(\frac{R}{I_1(\varphi)})$ is minimal:

\begin{example}\label{m20}
Let $(R,\fm,k)$ be a $2$-dimensional regular local ring. Then $\h^0(\fm\otimes_R\fm)=1$.
\end{example}

Note that $\fm$ has a presentation of the form
$0\to R\stackrel{\varphi}\lo R^{2} \to \fm\to 0$ where the ideal $I_1(\varphi)=\fm$.
\begin{proof}
Let
 $x$ and $y$ be a generating set of $\fm$ and look at $\zeta:=x\otimes y-y\otimes x$. We have $$x\zeta=x(x\otimes y-y\otimes x)=x^2\otimes y-xy\otimes x=xy\otimes x-xy\otimes x=0.$$
 Similarly,
$y\zeta=0$, so that $\fm\zeta=0$. By definition, $\zeta\in\HH^0_{\fm}(\fm\otimes_R\fm)$. Again due to definition, $\HH^0_{\fm}(\fm\otimes_R\fm)$ is submodule of the torsion part of $
\fm\otimes_R\fm  $. On the other hand,
the torsion part of $ \fm\otimes_R\fm  $ is $\Tor^R_2(k,k)$ (see \cite[Lemma 1.4]{tensor}) which is a vector space of dimension equal to $\beta_2(k)=1$. From these,
$\HH^0_{\fm}(\fm\otimes_R\fm)=\zeta R\simeq k$.
In particular, $\h^0(\fm\otimes_R\fm)=\ell(\HH^0_{\fm}(\fm\otimes_R\fm))=1$.
 \end{proof}

The difference $2(\deg(M) + \ell(\frac{R}{I_n(\varphi)}))^2-\h^0 (M\otimes_RM)$ may be large:

\begin{proposition}\label{squar}
Let $(R,\fm,k)$ be a $2$-dimensional Cohen-Macaulay local domain and $I$ be an ideal generated by a full parameter sequence. Then $\h^0(I\otimes_RI)=\hdeg(R/I)$. In particular, $$\h^0 (I\otimes_RI)=\ell(R/I)\lneqq
2\left(\deg(I) + \ell(R/I)\right)^2.$$
\end{proposition}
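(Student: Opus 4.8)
The plan is to identify $\HH^0_{\fm}(I\otimes_RI)$ with $\Tor^R_1(R/I,I)$, and then to evaluate the latter using that $I$ is generated by a length-two regular sequence. First, since $R$ is Cohen-Macaulay, a full parameter sequence $x_1,x_2$ generating $I$ is an $R$-regular sequence, so $R/I$ is Artinian and $\sqrt I=\fm$; in particular the only prime of $R$ containing $I$ is $\fm$. Tensoring $0\to I\to R\to R/I\to 0$ with $I$ and using $\Tor^R_1(R,I)=0$ produces an exact sequence $0\to\Tor^R_1(R/I,I)\to I\otimes_RI\stackrel{\mu}\lo I$, where $\mu(a\otimes b)=ab$ and $\im(\mu)=I^2$. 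Since $R$ is a domain, $I^2\subseteq R$ is torsion-free, so the torsion submodule of $I\otimes_RI$ lies in $\ker\mu\cong\Tor^R_1(R/I,I)$. Conversely $\Supp(\Tor^R_1(R/I,I))\subseteq\Supp(R/I)=\{\fm\}$, so $\Tor^R_1(R/I,I)$ has finite length, hence is annihilated by a power of $\fm$ and therefore sits inside $\HH^0_{\fm}(I\otimes_RI)$. Since $\HH^0_{\fm}(I\otimes_RI)$ is in turn contained in the torsion submodule (as $R$ is a domain with $\fm\neq 0$), all three modules coincide, so $\h^0(I\otimes_RI)=\ell(\Tor^R_1(R/I,I))$.

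It then remains to compute $\Tor^R_1(R/I,I)$. The long exact sequence attached to $0\to I\to R\to R/I\to 0$ gives $\Tor^R_1(R/I,I)\simeq\Tor^R_2(R/I,R/I)$. Since $x_1,x_2$ is a regular sequence, the Koszul complex $K_\bullet(x_1,x_2;R)$ is a free resolution of $R/I$; tensoring it with $R/I$ kills every differential, since these are given by the entries $x_1,x_2\in I$, and hence $\Tor^R_i(R/I,R/I)\simeq\wedge^i(R/I)^2$ for all $i$. In particular $\Tor^R_2(R/I,R/I)\simeq R/I$, whence $\h^0(I\otimes_RI)=\ell(R/I)$. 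Finally $R/I$ is Artinian, so $\hdeg(R/I)=\ell(R/I)$ by definition of the cohomological degree, which yields $\h^0(I\otimes_RI)=\hdeg(R/I)$.

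The displayed strict inequality is then purely arithmetic: $I\subseteq\fm$ forces $\ell(R/I)\geq 1$, so $(\deg(I)+\ell(R/I))^2\geq\ell(R/I)^2\geq\ell(R/I)$, and therefore $2(\deg(I)+\ell(R/I))^2\geq 2\ell(R/I)>\ell(R/I)=\h^0(I\otimes_RI)$. I do not expect a genuine obstacle in this argument; the one point that needs care is the chain of inclusions identifying $\HH^0_{\fm}(I\otimes_RI)$ with $\Tor^R_1(R/I,I)$, where one must simultaneously use that $I^2$ is torsion-free (to push the torsion of $I\otimes_RI$ into $\ker\mu$) and that $\Tor^R_1(R/I,I)$ has finite length (to push it back into $\HH^0_{\fm}$).
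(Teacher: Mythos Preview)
Your proof is correct and follows essentially the same route as the paper's: both identify $\HH^0_{\fm}(I\otimes_RI)$ with the torsion submodule of $I\otimes_RI$, compute that torsion as $\Tor^R_2(R/I,R/I)$ via the Koszul resolution, and read off $R/I$. The only cosmetic difference is that the paper cites \cite[Lemma 1.4]{tensor} for the identification $\tor(I\otimes_RI)\simeq\Tor^R_2(R/I,R/I)$, whereas you derive it directly through $\Tor^R_1(R/I,I)$ and the multiplication map $\mu$; your version is more self-contained and also spells out the $\hdeg(R/I)=\ell(R/I)$ step and the strict inequality, which the paper leaves implicit.
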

Note that $I$ has a presentation of the form
$0\to R\stackrel{\varphi}\lo R^{2} \to I\to 0$ where the ideal $I_1(\varphi) $ is $\fm$-primary.
\begin{proof}Let
 $x$ and $y$ be a generating set of $I$. The notation  $\mathbb{K}(I; R)$ stands for the Koszul complex of $R$ with respect to $I$. That is $$\mathbb{K}(I; R):=0\lo R\stackrel{(^{+y}_{-x})}\lo R^2\stackrel{(x,y)}\lo R\lo R/I\lo 0.$$ This is a minimal free
resolution of $R/I$. In view of definition,   $$\mathbb{K}(I; R)\otimes_R R/I\simeq0\lo R/I\stackrel{0}\lo R/I\oplus R/I\stackrel{0}\lo R/I\lo R/I\otimes
R/I\lo 0.$$
By definition,  $$\tor(I\otimes _RI)\simeq\Tor^R_2(R/I,R/I)\simeq\HH_2(\mathbb{K}(I; R)\otimes_R \frac{R}{I})\simeq\frac{R}{I}.$$
We look at the exact sequence $$0\to \tor(I\otimes _RI)\to I\otimes_R I\to \frac{I\otimes _RI}{\tor(I\otimes_R I)}\to 0.$$
Since $\frac{I\otimes_R I}{\tor(I\otimes _RI)}$ is torsion-free, $\HH^0_{\fm}(\frac{I\otimes_R I}{\tor(I\otimes_R I)})=0$. We put this in
$$0\to \HH^0_{\fm}( \tor(I\otimes _RI) )\to\HH^0_{\fm}( I\otimes _RI )\to \HH^0_{\fm}(\frac{I\otimes _RI}{\tor(I\otimes_R I)})$$ to see that $\HH^0_{\fm}( \tor(I\otimes _RI) )\simeq\HH^0_{\fm}( I\otimes_R I )$. Since $\ell(\frac{R}{I})<\infty$,  $$\HH^0_{\fm}( I\otimes _RI )\simeq\HH^0_{\fm}( \tor(I\otimes_R I) )\simeq\HH^0_{\fm}( R/I )\simeq R/I.$$
Thus, $\h^0(I\otimes_RI)=\ell(R/I)$.
 \end{proof}

In our 2-dimensional approach, $\h^0(M\otimes_RM)$ rarely vanishes:

\begin{observation}\label{2au}
Let $(R,\fm,k)$ be a $2$-dimensional regular local ring  and $0\neq M$ be torsion-free. Then $\h^0(M\otimes_RM)=0$ if and only if $M$ is free.
\end{observation}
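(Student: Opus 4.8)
I would prove both implications separately. The direction "$M$ free $\Rightarrow$ $\h^0(M\otimes_RM)=0$" is immediate: if $M\cong R^n$ then $M\otimes_RM\cong R^{n^2}$ is free, hence torsion-free and in particular has vanishing $\fm$-torsion. So the content is the converse, and the natural route is to assume $M$ is torsion-free but not free and produce a nonzero element of $\HH^0_{\fm}(M\otimes_RM)$.

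For the converse, the first step is to reduce to the rank-one case. Over a $2$-dimensional regular local ring, a torsion-free module $M$ that is not free need not be locally free on the punctured spectrum a priori, but I would argue as follows: $\reg$ $+$ $\depth$ considerations (Auslander--Buchsbaum) force $\pd(M)\le 1$, and if $M$ is torsion-free then $\depth M\ge 1$; if moreover $M$ is not free then $\pd M=1$ and $M$ has a presentation $0\to R^n\xrightarrow{\varphi} R^{m}\to M\to 0$ with $I_n(\varphi)$ of height $\ge 2$, i.e.\ $\fm$-primary (height exactly $2$ by the Buchsbaum--Rim / Eagon--Northcott theory, since $M\ne 0$). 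Thus $M$ is locally free on $\Spec(R)\setminus\{\fm\}$. Now the key structural input is the identification of the torsion submodule of $M\otimes_RM$: by the same result cited in Example~\ref{m20} (the fact that $\tor(M\otimes_RN)$ is governed by $\Tor_{\ge 1}$ when $M,N$ are torsion-free and locally free on the punctured spectrum), one has $\tor(M\otimes_RM)\cong \Tor_1^R(M,M)$, and this Tor is of finite length because $M$ is locally free away from $\fm$. Since $\Tor^R_1(M,M)$ is $\fm$-torsion, the exact sequence $0\to\tor(M\otimes_RM)\to M\otimes_RM\to (M\otimes_RM)/\tor\to 0$ together with $\HH^0_{\fm}$ applied (the quotient being torsion-free hence $\HH^0_{\fm}$-free, exactly as in Proposition~\ref{squar}) gives $\HH^0_{\fm}(M\otimes_RM)\cong\Tor_1^R(M,M)$.

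So the whole statement reduces to: \emph{$\Tor_1^R(M,M)=0$ forces $M$ free}, for $M$ torsion-free over a $2$-dimensional RLR. This is a rigidity statement. One clean way: since $\pd M\le 1$, the minimal free resolution is $0\to R^n\xrightarrow{\varphi}R^m\to M\to 0$; tensoring with $M$, $\Tor_1^R(M,M)=\ker(\varphi\otimes M)=\ker(R^n\otimes M\to R^m\otimes M)$. Vanishing of this means $\varphi\otimes \mathrm{id}_M$ is injective, i.e.\ $\varphi$ stays injective after tensoring with $M$. But one also has $\Tor_1^R(M,M)\cong\Tor_1^R(M^\ast{}^\ast,\ldots)$—more efficiently, use the standard fact (Auslander) that over a regular local ring of dimension $\le 2$, $\Tor_1^R(M,M)=0$ for $M\ne 0$ implies $M$ is free: indeed by depth counting $\depth\Tor_1$ or by the Auslander--Lichtenbaum rigidity theorem, if $\Tor_1^R(M,M)=0$ then $\Tor_i^R(M,M)=0$ for all $i\ge 1$, whence $\pd M+\pd M=\pd(M\otimes_R M)<\infty$ with the depth formula giving $\depth M=\depth R=2$, so $\pd M=0$. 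Alternatively, and most self-containedly in the spirit of this paper, invoke Huneke--Wiegand's result that over a hypersurface (in particular a regular local ring) a torsion-free module with $M\otimes_R M$ torsion-free is free—which is literally the $\fa=\fm$, $r=1$ (or $r=0$) case quoted after Observation~\ref{2au}'s neighbors; but to avoid circularity I would give the short direct argument.

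\emph{Main obstacle.} The delicate point is the identification $\HH^0_{\fm}(M\otimes_RM)\cong\Tor_1^R(M,M)$ and, underlying it, the claim that $\tor(M\otimes_RM)$ has finite length—this requires $M$ to be locally free on the punctured spectrum, which in turn needs the argument that a non-free torsion-free module over a $2$-dimensional RLR has $\fm$-primary Fitting ideal $I_n(\varphi)$. Once that is in place, the rigidity step "$\Tor_1^R(M,M)=0\Rightarrow M$ free" is the other substantive ingredient; I would handle it via the depth formula $\depth(M\otimes_R M)=\depth R-2\pd M$ valid when all higher Tor's vanish, combined with Auslander--Lichtenbaum rigidity over the regular ring $R$ to propagate vanishing of $\Tor_1$ to all $\Tor_i$. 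Everything else (the $\HH^0_{\fm}$ bookkeeping on the torsion exact sequence) is routine and parallels the proof of Proposition~\ref{squar}.
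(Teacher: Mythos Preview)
Your proposal contains a genuine error that breaks the argument. The identification $\tor(M\otimes_R M)\cong\Tor_1^R(M,M)$ (and hence $\HH^0_{\fm}(M\otimes_R M)\cong\Tor_1^R(M,M)$) is \emph{false}. In Example~\ref{m20} the torsion of $\fm\otimes_R\fm$ is $\Tor_2^R(k,k)\cong\Tor_1^R(k,\fm)$, \emph{not} $\Tor_1^R(\fm,\fm)$; the latter is $\Tor_3^R(k,k)=0$. More generally, for torsion-free $M$ one has $\tor(M\otimes_R M)\cong\Tor_1^R(C,M)$ with $C:=M^{\ast\ast}/M$ (use $0\to M\to M^{\ast\ast}\to C\to 0$, where $M^{\ast\ast}$ is free over a $2$-dimensional regular local ring), not $\Tor_1^R(M,M)$.

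As a consequence, the ``reduced'' statement you aim for --- that $\Tor_1^R(M,M)=0$ forces $M$ free for torsion-free $M$ over a $2$-dimensional regular local ring --- is itself false, with $M=\fm$ a counterexample. Your depth-formula step does not rescue this: from $\Tor_+^R(M,M)=0$ you only get $\depth(M\otimes_R M)=2\depth(M)-2$, which is $0$ when $\depth(M)=1$ and says nothing forcing $\depth(M)=2$.

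The paper's proof runs in the opposite direction. Assuming $M$ torsion-free and not free (so $\pd(M)=1$, $\depth(M)=1$, and $M$ is locally free on the punctured spectrum), one first \emph{proves} $\Tor_1^R(M,M)=0$: otherwise $\Tor_1^R(M,M)$ has finite length, and Auslander's formula \cite[Theorem~1.2]{au} gives $\depth(M)=\depth(\Tor_1^R(M,M))+\pd(M)-1=0$, a contradiction. With all higher Tor vanishing, $\pd(M\otimes_R M)=\pd(M)+\pd(M)=2$, so by Auslander--Buchsbaum $\depth(M\otimes_R M)=0$ and thus $\h^0(M\otimes_R M)\neq 0$. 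Your alternative route via Huneke--Wiegand (once you note that local freeness on the punctured spectrum makes $\h^0=0$ equivalent to $M\otimes_R M$ being torsion-free) is viable, but the ``short direct argument'' you chose instead does not work.
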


\begin{proof}
The if part is trivial. Suppose $M$ is not free. Since $M$ is $(\Se_1)$ it follows that $\pd(M)=1$.
We claim that $\Tor_1^R(M,M)=0$. Suppose on the contradiction that  $\Tor_1^R(M,M)\neq0$. Let $\fp$ be any height
one prime ideal. Since $R_{\fp}$ is a discreet valuation ring and $M_{\fp}$ is torsion-free, it follows that $M_{\fp}$ is free over $R_{\fp}$.
From this, $\Tor_1^R(M,M)$ is of finite length. Thus, $\depth(\Tor_1^R(M,M))=0$.
We recall the following result of Auslander (see \cite[Theorem 1.2]{au}):
\begin{enumerate}
\item[Fact A)]Let $S$ be a local ring, $\pd(A)<\infty$. Let $q$ be the largest number such that  $\Tor_q^S(A, B)\neq0$. If $\depth(\Tor_q^S(A, B))\leq1$,
then $\depth(B)=\depth(\Tor_q^S(A, B))+\pd(A)-q.$
\end{enumerate}
We use this for $A=B=M$ and $q=1$, to see  $$1=\depth(M)=\depth(\Tor_1^R(M, M))+\pd(M)-q=0+1-1=0,$$ a contradiction. Thus,  $\Tor_1^R(M,M)=0$.
This vanishing result allow us to use:
\begin{enumerate}
\item[Fact B)] (see \cite[Corollary 1.3]{au}) Let $S$ be a local ring, $A$ and $B$ be of finite projective dimension. If $\Tor_+^S(A, B)=0$, then $\pd(A)+\pd(B)=
\pd(A\otimes_S B)$.
\end{enumerate}
From this, $\pd(M\otimes_RM)=2$. By Auslander-Buchsbaum, $\depth(M\otimes_RM)=0$. Consequently, $\h^0(M\otimes_RM)\neq0$.
\end{proof}

The above observation extends in the following sense:

\begin{proposition}\label{g} Let $(R,\fm,k)$ be a  local ring with an ideal $\fa$, $M$ and $N$  be  such that $\pd(M)<\infty$ and  one of them is locally free over
	$\Spec(R)\setminus\V(\fa)$. Let $0\leq r<d:=\dim R$ be such that
$\grade_R(\fa,M)+\grade_R(\fa,N)\geq d+r+1$. Then   $\HH_{\fa}^0(M\otimes_RN)=\ldots=\HH_{\fa}^r(M\otimes_RN)=0$.
\end{proposition}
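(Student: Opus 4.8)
The plan is to run a syzygy chase that trades the vanishing of $\HH^j_\fa(M\otimes_RN)$ for $j\le r$ against the vanishing of a single local cohomology module of $N$ in a higher cohomological degree. I may assume $\fa\subsetneq R$ and $M,N\ne 0$, since otherwise every module in sight is zero. Write $p:=\pd(M)$ and $g:=\grade_R(\fa,N)$, and recall that $g=\inf\{j:\HH^j_\fa(N)\ne 0\}$.

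First I would record a purely numerical inequality: $r+p<g$. Indeed, a maximal $M$-regular sequence contained in $\fa$ extends to one contained in $\fm$, so $\grade_R(\fa,M)\le\grade_R(\fm,M)=\depth(M)$; and since $\pd(M)<\infty$, the Auslander--Buchsbaum formula gives $p=\depth(R)-\depth(M)\le d-\grade_R(\fa,M)$. Combining this with the hypothesis $\grade_R(\fa,M)+g\ge d+r+1$ yields $r+p\le r+d-\grade_R(\fa,M)\le g-1$. Next I would note that the ``locally free'' hypothesis is exactly what forces the higher Tor modules to be $\fa$-torsion: if (say) $M$ is locally free on $\Spec(R)\setminus\V(\fa)$ then so is each syzygy $\Syz_i(M)$, hence $\Tor_i^R(M,N)_\fp=\Tor_i^{R_\fp}(M_\fp,N_\fp)=0$ for $i\ge1$ and $\fp\notin\V(\fa)$ (symmetrically if $N$ is the locally free module), so $\Tor_i^R(M,N)$ is a finitely generated module with support inside $\V(\fa)$, is killed by a power of $\fa$, and therefore satisfies $\HH^j_\fa(\Tor_i^R(M,N))=0$ for all $j\ge1$; the same applies to $\Tor_1^R(\Syz_{i-1}(M),N)\cong\Tor_i^R(M,N)$.

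Then comes the chase itself. Fix $j\le r$, take a minimal free resolution of $M$, set $F_{i}=R^{\beta_{i}(M)}$ and $\Syz_0(M):=M$, and for $i\ge1$ tensor $0\to\Syz_i(M)\to F_{i-1}\to\Syz_{i-1}(M)\to0$ with $N$. The four-term exact sequence obtained splits into $0\to\Tor_1^R(\Syz_{i-1}(M),N)\to\Syz_i(M)\otimes_RN\to K_{i-1}\to0$ and $0\to K_{i-1}\to F_{i-1}\otimes_RN\to\Syz_{i-1}(M)\otimes_RN\to0$. Applying $\Gamma_\fa$: the first sequence together with the $\fa$-torsionness above gives $\HH^{l+1}_\fa(\Syz_i(M)\otimes_RN)\cong\HH^{l+1}_\fa(K_{i-1})$ for all $l\ge0$, while the second gives, whenever $\HH^l_\fa(F_{i-1}\otimes_RN)=\HH^l_\fa(N)^{\beta_{i-1}(M)}=0$ --- that is, whenever $l<g$ --- an injection $\HH^l_\fa(\Syz_{i-1}(M)\otimes_RN)\hookrightarrow\HH^{l+1}_\fa(K_{i-1})\cong\HH^{l+1}_\fa(\Syz_i(M)\otimes_RN)$. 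Taking $l=j+i-1$ (which is $<g$ since $j+i-1\le r+p-1<g$ by the numerical step) for $i=1,\dots,p$ and composing, I would obtain $\HH^j_\fa(M\otimes_RN)\hookrightarrow\HH^{j+1}_\fa(\Syz_1(M)\otimes_RN)\hookrightarrow\cdots\hookrightarrow\HH^{j+p}_\fa(\Syz_p(M)\otimes_RN)$. Since $\Syz_p(M)$ is free, the last term equals $\HH^{j+p}_\fa(N)^{\beta_p(M)}$, which vanishes because $j+p\le r+p<g=\grade_R(\fa,N)$; hence $\HH^j_\fa(M\otimes_RN)=0$. (When $p=0$, $M$ is free and $\HH^j_\fa(M\otimes_RN)\cong\HH^j_\fa(N)^{\beta_0(M)}=0$ immediately.)

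I do not expect a genuine obstacle here: each step is routine. What needs attention is, first, recognizing that the finiteness of support of $\Tor_i^R(M,N)$ over $\V(\fa)$ --- precisely the phenomenon the paper is built around --- is what makes the local cohomology degree shift by exactly one at each stage with no leftover term; and second, checking that the numerical estimate of the first step is tight enough, which it is, with one unit to spare.
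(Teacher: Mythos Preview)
Your proof is correct and follows essentially the same syzygy chase as the paper: both tensor the minimal free resolution of $M$ with $N$, use that the relevant Tor modules are $\fa$-torsion, and rely on the numerical estimate $r+\pd(M)<\grade_R(\fa,N)$ coming from Auslander--Buchsbaum. The one difference is cosmetic---you chain injections upward from $\HH^j_\fa(M\otimes_RN)$ into $\HH^{j+p}_\fa(N)^{\beta_p(M)}$, whereas the paper runs the induction downward from the top of the resolution and, as a byproduct, proves the slightly stronger fact that each $\Tor_1^R(\Syz_{i-1}(M),N)$ actually vanishes (since it is $\fa$-torsion and sits inside a module of positive $\fa$-grade).
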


\begin{proof}Without loss of the generality, neither $M=0$ nor $N=0$.
 We claim that grade of $\fa$ with respect to $N$ and $M$ is at least $r+1$. To this end
 recall that $d=\dim(R)\geq\dim(N)\geq\depth(N)\geq\grade_R(\fa,N)$. We put this into the assumption:
 $$\grade_R(\fa,M)+d\geq\grade_R(\fa,M)+\grade_R(\fa,N)\geq d+r+1,$$i.e., $\grade_R(\fa,M) \geq  r+1.$ Similarly,
  $\grade_R(\fa,N)\geq r+1$.

Let $i=\pd(M)$ and let $j:=\grade_R(\fa,N)$.
 The case $i=0$ is trivial. By Auslander-Buchsbaum, we have 
 \[\begin{array}{ll}
j&\geq \dim R-\grade_R(\fa,M)+r+1 \\
 &\geq\depth(R)-\grade_R(\fa,M)+r+1\\
 &\geq\depth(R)-\depth(M)+r+1\\
 &=\pd(M)+r+1.
 \end{array}\]

 By definition, there is an exact sequence $0\to R^{n_i}\to \ldots \to R^{n_0}\to M\to 0.$ We break down it into short
exact sequences:
\begin{enumerate}
		\item[  ] $0\lo \Syz_1(M)\lo  R^{n_0}\lo M\lo 0$
\item[ ] $\vdots$\item[ ] $0\lo \Syz_{i-1}(M)\lo  R^{n_{i-2}}\lo \Syz_{i-2}(M)\lo 0$ and
\item[ ] $0\lo R^{n_{i}}\lo  R^{n_{i-1}}\lo \Syz_{i-1}(M)\lo 0$.
	\end{enumerate}
This induces:
\begin{enumerate}
\item[] $0\lo \Tor^R_1(M,N)\lo \Syz_1(M)\otimes_R N\lo  R^{n_0}\otimes_R N\lo M\otimes_R N\lo 0$,
\item[] $0\lo \Tor^R_1(\Syz_{1}(M),N) \lo \Syz_2(M)\otimes_R N\lo  R^{n_1}\otimes_R N\lo \Syz_1(M)\otimes_R N\lo 0$
\item[] $\vdots$
\item[] $0\to \Tor^R_1(\Syz_{i-2}(M),N) \to \Syz_{i-1}(M)\otimes_R N\to  R^{n_{i-2}}\otimes_R N\to \Syz_{i-2}(M)\otimes_R N\to 0$ and
\item[] $0\lo \Tor^R_1(\Syz_{i-1}(M),N)\lo R^{n_{i}}\otimes_R N\lo  R^{n_{i-1}}\otimes_R N\lo \Syz_{i-1}(M)\otimes_R N\lo 0$.
	\end{enumerate}
Since one of $M$ and $N$ is locally free over
$\Spec(R)\setminus\V(\fa)$
we deduce that $\Tor^R_1(\Syz_{i-1}(M),N))$ is $\fa$-torsion. Thus, $\HH^{+}_{\fa}(\Tor^R_1(\Syz_{i-1}(M),N))=0$
and   $\HH^{0}_{\fa}(\Tor^R_1(\Syz_{i-1}(M),N))=\Tor^R_1(\Syz_{i-1}(M),N)$.
 Recall that    $\grade_R(\fa,R^{n_{i}}\otimes_R N)>0$ and     $\Tor^R_1(\Syz_{i-1}(M),N) \subset R^{n_{i}}\otimes_R N$.   We use these  to deduce that
 $$\Tor^R_1(\Syz_{i-1}(M),N)= \HH^{0}_{\fa}(\Tor^R_1(\Syz_{i-1}(M),N))\subset \HH^{0}_{\fa}(R^{n_{i}}\otimes_R N)=0,$$
i.e., $\Tor^R_1(\Syz_{i-1}(M),N)=0$.
From this, the sequence $$0\lo R^{n_{i}}\otimes_R N\lo  R^{n_{i-1}}\otimes_R N\lo \Syz_{i-1}(M)\otimes_R N\lo 0 $$ is exact. Let $\ell\leq i+r-1\leq\depth(N)-2$.
This induces the exact sequence $$0=\HH^{\ell}_{\fa}(R^{n_{i-1}}\otimes_R N)\lo\HH^{\ell}_{\fa}(\Syz_{i-1}(M)\otimes_R N)\lo \HH^{\ell+1}_{\fa}(R^{n_{i}}\otimes_R N)=0.$$
 Let us write this observation in the following way
$$0=\HH^0_{\fa}(\Syz_{i-1}(M)\otimes_RN)=\HH^1_{\fa}(\Syz_{i-1}(M)\otimes_RN)=\cdots=\HH^{r-1+i}_{\fa}(\Syz_{i-1}(M)\otimes_RN).$$
We continue this process to get that $\Tor^R_1(\Syz_1(M),N)=0$ and $$0=\HH^0_{\fa}(\Syz_{i-(i-1)}(M)\otimes_RN)=\ldots=\HH^{r-(i-1)+i}_{\fa}(\Syz_{i-(i-1)}(M)\otimes_RN).$$
Let us write this observation in the following way
$$0=\HH^0_{\fa}(\Syz_{1}(M)\otimes_RN)=\ldots=\HH^{r+1}_{\fa}(\Syz_{1}(M)\otimes_RN).$$
Recall that $\Tor^R_1(M,N)$ is  $\fa$-torsion, $\grade_R(\fa,\Syz_{1}(M)\otimes_RN)>0$ and $\Tor^R_1(M,N)\subseteq \Syz_1(M)\otimes_R N$. From this $\Tor^R_1(M,N)=0$.
Hence, the sequence
$$0\lo \Syz_{1}(M)\otimes_R N\lo  R^{n_{0}}\otimes_R N\lo M\otimes_R N\lo 0 $$is exact.
 Let $\ell\leq r $. Then $\ell\leq\grade_R(\fa,N)-1$.
This yields$$0=\HH^{\ell}_{\fa}(R^{n_0}\otimes_R N)\lo\HH^{\ell}_{\fa}(M\otimes_RN)\lo\HH^{\ell+1}_{\fa}(\Syz_{1}\otimes_RN)=0.$$Therefore, $\HH_{\fa}^0(M\otimes_RN)=\ldots=\HH_{\fa}^r(M\otimes_RN)=0$.
\end{proof}

If both modules have finite projective dimension, we have:
\begin{proposition}\label{gc}
Let $(R,\fm,k)$ be a    local ring of positive depth $d$, $M$ and $N$    are of finite projective  dimension. Assume
one of them is locally free over
$\Spec(R)\setminus\V(\fa)$. Let $0\leq r<d$ be such that
$\grade_R(\fa,M)+\grade_R(\fa,N)\geq d+r+1$. Then  $\HH_{\fa}^0(M\otimes_RN)=\ldots=\HH_{\fa}^r(M\otimes_RN)=0$.
\end{proposition}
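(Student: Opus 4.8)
The plan is to run the proof of Proposition \ref{g} essentially verbatim, the only point of departure being the opening grade computation, where now $d=\depth(R)$ rather than $d=\dim(R)$. The finiteness of $\pd(N)$ is exactly what compensates for this weaker hypothesis: by the Auslander--Buchsbaum formula $\depth(N)=\depth(R)-\pd(N)\leq d$, so $\grade_R(\fa,N)\leq\depth(N)\leq d$, and symmetrically $\grade_R(\fa,M)\leq\depth(M)\leq d$. Substituting these into the hypothesis $\grade_R(\fa,M)+\grade_R(\fa,N)\geq d+r+1$ yields $\grade_R(\fa,M)\geq r+1$ and $\grade_R(\fa,N)\geq r+1$, just as in Proposition \ref{g}. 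Moreover, writing $i:=\pd(M)$ and $j:=\grade_R(\fa,N)$, the same computation now reads
$$j\ \geq\ d+r+1-\grade_R(\fa,M)\ \geq\ d+r+1-\depth(M)\ =\ \pd(M)+r+1,$$
using Auslander--Buchsbaum once more. So we reach the inequality $j\geq i+r+1$ that powers the argument of Proposition \ref{g} (and, as there, the case $i=0$ is immediate since then $M\otimes_RN$ is a direct sum of copies of $N$).

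From this point on, $d$ no longer occurs: the remainder of the proof of Proposition \ref{g} uses only $\grade_R(\fa,M)\geq r+1$, $\grade_R(\fa,N)\geq r+1$, $j\geq i+r+1$, $\pd(M)<\infty$, and the hypothesis that one of $M$, $N$ is locally free on $\Spec(R)\setminus\V(\fa)$. I would therefore repeat it unchanged: resolve $M$ minimally, break it into the short exact sequences of its syzygies, tensor each with $N$, and observe that $\Tor^R_1(\Syz_{i-1}(M),N)$ is $\fa$-torsion (one of $\Syz_{i-1}(M)$, $N$ being free off $\V(\fa)$) while sitting inside $R^{n_i}\otimes_R N$, which has positive $\fa$-grade; hence it vanishes. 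Climbing back up the syzygy sequences, at each stage one combines the vanishing of the next $\Tor^R_1$ with a local cohomology degree shift --- the bound $j\geq i+r+1$ supplying exactly the vanishing $\HH^{\leq i+r-1}_{\fa}(\Syz_{i-1}(M)\otimes_R N)=0$, then $\HH^{\leq i+r-2}_{\fa}(\Syz_{i-2}(M)\otimes_R N)=0$, down to $\HH^{\leq r+1}_{\fa}(\Syz_1(M)\otimes_R N)=0$. Finally $\Tor^R_1(M,N)\subseteq\Syz_1(M)\otimes_R N$ is $\fa$-torsion of positive $\fa$-grade, hence zero, so $0\to\Syz_1(M)\otimes_R N\to R^{n_0}\otimes_R N\to M\otimes_R N\to 0$ is exact, and applying $\HH^{\bullet}_{\fa}$ in degrees $\ell\leq r\leq\grade_R(\fa,N)-1$ gives $\HH^{\ell}_{\fa}(M\otimes_R N)=0$ for $0\leq\ell\leq r$.

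The argument presents no real obstacle; its entire content is the observation that $\pd(N)<\infty$ can stand in for the inequality $\dim(R)\geq\depth(R)$ that Proposition \ref{g} exploited. The one thing to check carefully is that $d$ genuinely does not re-enter the tail of that proof --- it does not, every later step being phrased through $\grade_R(\fa,N)$, $\pd(M)$ and $r$ --- so replacing $\dim R$ by $\depth R$ in the hypothesis is harmless once the opening bookkeeping is redone.
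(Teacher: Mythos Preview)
Your proposal is correct and matches the paper's approach: both arguments use Auslander--Buchsbaum (enabled by $\pd(N)<\infty$) to bound $\grade_R(\fa,N)\leq\depth(N)\leq d$ and thereby secure $\grade_R(\fa,M),\grade_R(\fa,N)\geq r+1$ together with $j\geq\pd(M)+r+1$, after which the proof of Proposition~\ref{g} runs verbatim. Your derivation of the grade bounds is in fact a bit more direct than the paper's, which argues by a short case analysis, but the content is the same.
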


\begin{proof} We claim that $N$ and $M$ have depth at least $r+1$.
Clearly $N$ and $M$ have depth at least $r$.
First we show that $\grade_R(\fa,M)=\grade_R(\fa,N)=r$ is not the case: suppose on the contradiction that $\grade_R(\fa,M)=\grade_R(\fa,N)=r$.
Thus, $$2r=\grade_R(\fa,M)+\grade_R(\fa,N)\geq d+r+1,$$ i.e., $r\geq d+1$ which is excluded by the assumption.
Hence, one of $M$ and $N$ has a depth at least $r+1$. By symmetry, we assume that $\grade_R(\fa,N)\geq r+1$. Now we show $\grade_R(\fa,M)\geq r+1$.
Suppose on the contrary that $r\leq\grade_R(\fa,M)<r+1$. Therefore,$$r+\grade_R(\fa,N)=\grade_R(\fa,M)+\grade_R(\fa,N)\geq d+r+1.$$From this, $$d\geq\depth(R)-\pd(N)=\depth(N)\geq\grade_R(\fa,N)\geq d+1.$$
This is a contradiction. In sum, $\grade_R(\fa,M)\geq r+1$ and $\grade_R(\fa,N)\geq r+1$. The remaining of the
proof is similar to  Proposition \ref{g}.
\end{proof}

\begin{example}\label{gex}
	The assumption $\pd(M)<\infty$ is essential:
\begin{enumerate}
\item[i)] Let $R$ be any 1-dimensional local domain which is not regular. Then there is an ideal
$I$ which is not principal. Thus, $I^{\otimes 2}$ has a torsion. Let $r:=0$. Then $2\depth(I)=\dim(R)+r+1$. However, $\h^0(I^{\otimes 2})\neq 0$.
\item[ii)] In  view of \cite[Example 1.8]{tensor2} there is a maximal Cohen-Macaulay and locally free module $M$  over $R:=\frac{k[[x,y,z,w]]}{(xy-uv)}$ such that $ M\otimes_RM^\ast\cong \fm$. Let $r:=2$. Then $\depth(M)+\depth(M^\ast)=\dim R+r+1$. However,  $\h^1(M\otimes_RM^\ast)\neq0$.
\end{enumerate}
\end{example}

Let us consider to another situation for which $\h^0(-\otimes_R-)$ vanishes:

\begin{observation}
Let $(R,\fm,k)$ be a $d$-dimensional regular local ring  with $d>2$ and $I$ be a Gorenstein ideal of height two. Then $\h^0(I\otimes_RI)=0$.
\end{observation}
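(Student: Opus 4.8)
The plan is to show first that under these hypotheses $I$ is actually a complete intersection, then to compute the torsion submodule of $I\otimes_R I$ by a Koszul computation as in the proof of Proposition \ref{squar}, and finally to use the hypothesis $d>2$ to kill the relevant $\HH^0_{\fm}$.

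For the first step I would recall that, $R$ being regular, $R/I$ Gorenstein forces $\pd_R(R/I)=\Ht(I)=2$, so $I$ is a perfect grade-two ideal. By the Hilbert--Burch theorem its minimal free resolution has the shape $0\to R^{n-1}\stackrel{\varphi}\lo R^{n}\lo R\lo R/I\lo 0$; dualizing into $R$ and using $\Ext^2_R(R/I,R)\cong\omega_{R/I}\cong R/I$, the cokernel $\coker(\varphi^{\ast})$ must be cyclic, which forces $n=2$. Hence $I$ is generated by two elements, and since these generate a height-two ideal in the Cohen--Macaulay ring $R$ they form a regular sequence; in other words $I$ is a complete intersection of height two. (One could instead invoke the classical structure theorem for Gorenstein ideals of grade two.)

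Next I would determine $\tor(I\otimes_R I)$. Tensoring $0\to I\to R\to R/I\to 0$ with $I$ yields an exact sequence $0\to\Tor_1^R(R/I,I)\to I\otimes_R I\to I$ in which the last arrow is the multiplication map onto $I^2\subseteq R$. Since $R$ is a domain, $I^2$ is torsion-free, so $\tor(I\otimes_R I)=\Tor_1^R(R/I,I)\cong\Tor_2^R(R/I,R/I)$. Computing this $\Tor$ from the Koszul complex $\mathbb{K}(I;R)$ --- whose differentials become zero after applying $-\otimes_R R/I$, exactly as in the proof of Proposition \ref{squar} --- I obtain $\tor(I\otimes_R I)\cong R/I$.

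Finally, from the exact sequence $0\to\tor(I\otimes_R I)\to I\otimes_R I\to (I\otimes_R I)/\tor(I\otimes_R I)\to 0$ I would apply the left-exact functor $\Gamma_{\fm}$. The quotient is torsion-free over the domain $R$, hence has positive depth and vanishing $\HH^0_{\fm}$; and $\tor(I\otimes_R I)\cong R/I$ is a complete-intersection ring of dimension $d-2$, so --- and this is the only place the hypothesis $d>2$ enters --- it has positive depth and $\HH^0_{\fm}(R/I)=0$. Therefore $\HH^0_{\fm}(I\otimes_R I)=0$, i.e. $\h^0(I\otimes_R I)=0$. The one genuinely substantive point is the reduction in the first step: the Gorenstein hypothesis is precisely what collapses the Hilbert--Burch resolution to a Koszul complex, and without it $\Tor_2^R(R/I,R/I)$ need not be cyclic and could well have depth zero; the remaining steps are routine. (For $d=2$ the same computation would instead give $\h^0(I\otimes_R I)=\ell(R/I)\neq 0$, in agreement with Proposition \ref{squar}.)
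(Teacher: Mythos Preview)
Your proof is correct and follows essentially the same route as the paper: reduce to $I$ being a complete intersection of height two, compute $\tor(I\otimes_R I)\cong\Tor_2^R(R/I,R/I)\cong R/I$ via the Koszul resolution, and then use $d>2$ to get $\depth(R/I)>0$ and hence $\HH^0_{\fm}(I\otimes_R I)=0$. The only cosmetic difference is in the first step, where the paper simply invokes Serre's theorem that codimension-two Gorenstein ideals are complete intersections, while you re-derive it from Hilbert--Burch plus the cyclicity of $\Ext^2_R(R/I,R)$.
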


\begin{proof}
Due to a result of Serre, $I$ generated by a regular sequence $x$ and $y$.
 Since $\HH^0_{\fm}(I\otimes_R I)\subset \tor(I\otimes_R I)$, we deduce that $\HH^0_{\fm}(I\otimes_R I)\subset \HH^0_{\fm}(\tor(I\otimes_R I))$.
 The Koszul complex of $R$ with respect to $x$ and $y$ is a free resolution
of $R/I$. Then, $$\tor(I\otimes _RI)=\Tor^R_2(R/I,R/I)\simeq \HH_2(\mathbb{K}(I; R)\otimes _RR/I)=R/I.$$
Recall that depth  of $R/I$ is positive. By the cohomological characterization of depth,
 $\HH^0_{\fm}(R/I)=0$. We put all things together to deduce that $$\HH^0_{\fm}(I\otimes_R I)\simeq \HH^0_{\fm}(\tor(I\otimes _RI))=\HH^0_{\fm}(R/I)=0.$$
So, $\h^0(I\otimes_RI)=0,$ as claimed.
\end{proof}

\section{Torsion in tensor products}

In \cite[Question 8.4]{2010} Vasconcelos posed some questions.
For example, let $R$ be a one-dimensional domain and $M$ a torsion-free module such that $M\otimes_R M$ is torsion-free. Is $M$ free?

\begin{example}(See \cite[4.7]{tensor})
Let  $(R,\fm)$  be a one-dimensional local domain with a canonical module which is not Gorenstein. Then there is a non-free   and torsion-free module
$M$ such that $M\otimes_R M$ is torsion-free.
\end{example}

\begin{remark}In the positive side, we remark that:\begin{enumerate}
		\item[  i)] The above question is true over  hypersurface rings (see \cite[Theorem 3.7]{tensor}).
\item[ ii)] The question is true provided $M$ is an ideal.
	\end{enumerate}
\end{remark}

Also, Vasconcelos asked:

\begin{question}
Let $R$ be a local domain and $M$ be  torsion-free. Is there an integer $e$ guaranteeing that if $M$ is not free,
then the tensor power $M^{\otimes e}$ has nontrivial torsion?
\end{question}

\begin{proposition}\label{}
Let  $(R,\fm)$  be a  $3$-dimensional Cohen-Macaulay  local  ring and $M$ be a reflexive module  such that $\pd(M)<\infty$. If $M^{\otimes 3}$  is torsion-free, then
$M$ is free.
\end{proposition}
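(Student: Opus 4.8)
The plan is to first use reflexivity to pin down that $\pd(M)\le 1$, and then to run an Auslander-type rigidity argument through the two tensor factors, tracking depths via the Auslander--Buchsbaum formula until $M^{\otimes 3}$ is forced to have depth $0$ — which, over a ring of positive depth, is incompatible with torsion-freeness.

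First I would record two consequences of reflexivity. Since $M$ is reflexive, $M\cong\Hom_R(M^\ast,R)$, and because $\depth_R\Hom_R(-,R)\ge\min\{2,\depth R\}$ (from a presentation $R^a\to R^b\to M^\ast\to 0$ one sees $\Hom_R(M^\ast,R)$ is a second syzygy) and $\depth R=3$, we get $\depth M\ge 2$. Localizing the same isomorphism at a prime $\fp\ne\fm$ and using $\depth M_{\fp}\ge\min\{2,\Ht\fp\}$ together with $\pd_{R_{\fp}}M_{\fp}<\infty$ and $\Ht\fp\le 2$ forces $M_{\fp}$ to be free, so $M$ is locally free on the punctured spectrum. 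By Auslander--Buchsbaum, $\pd(M)=3-\depth M\le 1$. If $\pd(M)=0$ then $M$ is free and we are done, so assume $\pd(M)=1$, hence $\depth M=2$, and aim for a contradiction.

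Next I would show $\Tor_1^R(M,M)=0$. Since $M$ is free on the punctured spectrum and $\pd(M)=1$, the module $\Tor_1^R(M,M)$ has finite length; were it nonzero it would have depth $0$, and Fact A in the proof of Observation \ref{2au} (Auslander's \cite[Theorem 1.2]{au}), applied with $A=B=M$ and $q=1$, would give $\depth M=0+\pd(M)-1=0$, contradicting $\depth M=2$. Hence $\Tor^R_{+}(M,M)=0$, so Fact B (\cite[Corollary 1.3]{au}) gives $\pd(M\otimes_RM)=2$ and therefore $\depth(M\otimes_RM)=1$. Now repeat this step with $M\otimes_RM$ in place of the left factor: it is again locally free on the punctured spectrum, so $\Tor_1^R(M\otimes_RM,M)$ has finite length, and if it were nonzero Fact A with $A=M$, $q=1$ would force $\depth(M\otimes_RM)=0$, a contradiction. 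Thus $\Tor^R_{+}(M\otimes_RM,M)=0$, and Fact B yields $\pd(M^{\otimes 3})=\pd(M\otimes_RM)+\pd(M)=3$, whence $\depth(M^{\otimes 3})=0$ by Auslander--Buchsbaum.

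Finally, $\depth(M^{\otimes 3})=0$ means $\HH^0_{\fm}(M^{\otimes 3})\ne 0$; choosing $0\ne y\in\HH^0_{\fm}(M^{\otimes 3})$ with $\Ann_R(y)=\fm$ and any nonzerodivisor $t\in\fm$ (which exists since $\depth R=3>0$) gives $ty=0$, so $M^{\otimes 3}$ is not torsion-free, contradicting the hypothesis. Hence $\pd(M)=0$ and $M$ is free. I expect the only genuinely delicate points to be (i) extracting $\depth M\ge 2$ and local freeness on the punctured spectrum cleanly from reflexivity over a ring that is only Cohen--Macaulay rather than Gorenstein, and (ii) the last step, where one must check that a nonzero finite-length submodule really does obstruct torsion-freeness — this rests only on $\depth R>0$. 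Everything in between is a mechanical iteration of Auslander's rigidity results and the Auslander--Buchsbaum formula.
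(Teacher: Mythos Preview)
Your proof is correct and is, in fact, cleaner than the paper's. Both arguments begin the same way (reflexivity $\Rightarrow$ $\depth M\ge 2$ and local freeness on the punctured spectrum, hence $\pd(M)\le 1$), but from there they diverge. The paper embeds $M$ into a free module $F$, sets $C=F/M$, uses the torsion-freeness of $M^{\otimes 3}$ to kill $\Tor_1^R(C,M^{\otimes 2})$, then invokes a Peskine--Szpiro trick to kill $\Tor_2^R(C,M^{\otimes 2})$, and finally derives a contradiction from the depth formula applied to the pair $(C,M^{\otimes 2})$. You instead iterate Auslander's Fact A/Fact B directly on the pairs $(M,M)$ and $(M,M^{\otimes 2})$ to force $\pd(M^{\otimes 3})=3$ and hence $\depth(M^{\otimes 3})=0$, only invoking the torsion-free hypothesis at the very last line. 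Your route avoids the auxiliary module $C$ and the Peskine--Szpiro step entirely; it is essentially the mechanism behind the paper's later Proposition~\ref{dpoc2} specialized to $p=1$, $d=3$, $i=3$. The paper's approach, on the other hand, consumes the hypothesis earlier and so might adapt more readily to situations where one only controls a single $\Tor$ rather than the full depth of $M^{\otimes 3}$.
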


\begin{proof}
	Since $M$ is torsion-free it is a submodule of a free module $F$. Let $C:=\frac{F}{M}$. There is nothing to prove if $C=0$.
Without loss of the generality we assume that $C\neq0$. Note that $\pd(M)\leq 1$. Suppose on the contradiction that $\pd(M)\neq 0$, i.e., $\pd(M)= 1$.
We look at the exact sequence $0\to M\to F\to C\to 0\quad(\ast)$. The induced long  exact sequence, presents the natural isomorphisms
$\Tor^R_{i+1}(C,M)\simeq\Tor^R_{i}(M,M)$ for all $i>0$. Since $\pd(M)=1$, $\Tor^R_{\geq 2}(C,M)=0$ and so $\Tor^R_{+}(M,M)=0$.
This vanishing result allow us to compute
 $\pd(M \otimes_R M) $, see Fact \ref{2au}.B). By Auslander-Buchsbaum formula, $$
\depth(M) + \depth(M)
= \depth(R) + \depth(M \otimes_R M). $$
From   $\depth(M)=2$ we see $\depth(M \otimes_R M)=1$.
 Again, $(\ast)$ yields the following exact sequence $$0\lo \Tor^R_1(C,M^{\otimes  2})\lo M^{\otimes  3}\lo M^{\otimes  2}\otimes_R F\lo M^{\otimes  2}\otimes_R C\lo 0$$and 
$\Tor^R_{i+1}(C,M^{\otimes  2})\simeq\Tor^R_{i}(M,M^{\otimes  2})$ for all $i>0$.
Here, we show $\Tor^R_+(-,M^{\otimes  2})$ is of finite length. Indeed,
let $\fp\neq \fm$ be in support of $M$. Since $M_{\fp}$ is reflexive and of finite projective dimension, it is $(\Se_2)$. Since $\depth(R_{\fp})=\dim R_{\fp}<3$
it follows that  $$\pd(M_{\fp})=\depth(R_{\fp})-\depth(M_{\fp})= 0,$$ i.e., $M$ is locally free. From this, $\Tor^R_{+}(-,M^{\otimes  2})$ is of finite length.
Since $\ell(\Tor^R_1(C,M^{\otimes  2}))<\infty$,
$\Tor^R_1(C,M^{\otimes  2})\subset M^{\otimes  3}$ and $M^{\otimes  3}$ is torsion-free,
we get that $\Tor^R_1(C,M^{\otimes  2})=0$.
In order to show $\Tor^R_2(C,M^{\otimes  2})=0$ we use a trick of Peskine-Szpiro. Since the assumptions are not the same, we present   the details.
Recall that  $\ell(\Tor^R_2(C,M^{\otimes  2}))<\infty$.
By $(\ast)$, we have $\pd(C)=2$. Let $0\to F_2\to F_1\to F_0\to C\to 0$ be a free resolution of $C$. Apply $-\otimes_RM^{\otimes  2}$ to it
we have $$\Tor^R_2(C,M^{\otimes  2})=\ker\left(F_2\otimes_R M^{\otimes  2}\to F_1\otimes_R M^{\otimes  2}\right)\subset \bigoplus_{\rank(F_2)} M^{\otimes  2}.$$Note that $M^{\otimes  2}$ is of positive depth.
Any non-zero submodule of a module of positive depth has a same property.  We apply this for the pair $\Tor^R_2(C,M^{\otimes  2})\subset \bigoplus_{\rank(F_2)} M^{\otimes  2}$
to deduce that
 $\Tor^R_{2}(C,M^{\otimes  2})=0$.  Since $\pd(C)=2$, $\Tor^R_{+}(C,M^{\otimes  2})=0$.
This allow us to apply Fact \ref{2au}.B) to see
$$\depth(C) + \depth(M^{\otimes  2})\stackrel{(+)}= \depth(R) + \depth(M^{\otimes  2}\otimes_R C).$$
 By Auslander-Buchsbaum formula,  $\depth(C)=1$. Recall that $\depth(M^{\otimes  2})=1$.
We see the left hand side of $(+)$ is $2$ and the right hand side is at least $3$. This is a contradiction. In sum,
$M$ is free.
\end{proof}

Finiteness of $\pd(M)$ is important: Let $R:=k[[X,Y,Z,W]]/(X^2)$ and $M:=R/ xR $. It is easy to see
that  $M^{\otimes\ell}$ is  reflexive for all $\ell>0$ but $M$ is not free.

\begin{remark}\label{t2}
Let  $(R,\fm)$ be a   local ring of depth $2$ and $M$ be  torsion-free such that $\pd(M)<\infty$. If $M^{\otimes  2}$  is torsion-free, then
$M$ is free.
\end{remark}

\begin{proof}Suppose on the contradiction that $M$   is not free. Since $M$ is torsion-free it is a submodule of a free module $F$. Let $C:=\frac{F}{M}$.
Without loss of the generality we assume that $C\neq0$.
We look at the exact sequence $0\to M\to F\to C\to 0$. The induced long  exact sequence, presents the natural isomorphisms
$\Tor^R_{i+1}(C,M)\simeq\Tor^R_{i}(M,M)$ for all $i>0$.  It follows by Auslander-Buchsbaum that $\pd(M)= 1$. We conclude that $\Tor^R_{\geq 2}(C,M)=0$.
Thus $\Tor^R_{+}(M,M)=0$. We recall from   Fact \ref{2au}.B)
 that
$
\depth(M) + \depth(M)
\stackrel{(+)}= \depth(R) + \depth(M \otimes_R M) $. Also, $\depth(M \otimes_R M)>0$ because it is torsion-free.  The left hand side of $(+)$ is $2$ and the right hand side is at least $3$.
This  contradiction says that $M$ is free.
\end{proof}

Finiteness of $\pd(M)$ is important: Let $R:=k[[X,Y,Z]]/(X^2)$ and $M:=R/ xR $. It is easy to see
that  $M^{\otimes\ell}$ is  reflexive for all $\ell>0$ but $M$ is not free.

\begin{corollary}
Let  $(R,\fm)$ be a 2-dimensional normal hypersurface ring and $M $ be such that
that  $M^{\otimes2}$ is  torsion-free. Then $M$ is   free.
\end{corollary}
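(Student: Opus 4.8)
The plan is to peel off the hypotheses one at a time and reduce to situations already treated. First I would record what ``$2$-dimensional normal hyper-surface'' buys us: a local normal ring is a domain; a hyper-surface ring is Cohen--Macaulay with $\depth R=\dim R=2$; and for $R=S/(f)$ with $S$ regular of dimension $3$, Serre's condition $(R_1)$ (contained in normality) forces $R_{\fp}$ to be regular for every prime $\fp\neq\fm$. Thus $R$ is a $2$-dimensional Gorenstein local domain with at most an isolated singularity, so every torsion-free $R$-module is locally free over $\Spec(R)\setminus\V(\fm)$, and $\Tor^R_i(X,Y)$ has finite length for all $i\geq 1$ whenever $X,Y$ are torsion-free.

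Second, I would reduce to the case that $M$ is torsion-free. Put $T:=\tor(M)$ and $\overline{M}:=M/T$. The image of $T\otimes_RM$ inside $M^{\otimes 2}$ is a torsion submodule of the torsion-free module $M^{\otimes 2}$, hence zero; therefore $M^{\otimes 2}\cong\overline{M}\otimes_RM$, and applying the same remark once more, $M^{\otimes 2}\cong\overline{M}^{\otimes 2}$. Granting the theorem for the torsion-free module $\overline{M}$, it is free, so $0\to T\to M\to\overline{M}\to 0$ splits and $M\cong T\oplus\overline{M}$. Then $T^{\oplus\rank\overline{M}}$ is a direct summand of the torsion-free module $M^{\otimes 2}$, which is impossible unless $T=0$ or $\overline{M}=0$; and if $\overline{M}=0$ then $M=T$ is torsion, so $M^{\otimes 2}$ is torsion and torsion-free, i.e.\ $M^{\otimes 2}=0$ and $M=0$ by Nakayama. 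In every case the statement reduces to its torsion-free instance, so henceforth $M$ is torsion-free and, without loss of generality, nonzero.

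Third, for $M$ torsion-free I would exploit the hyper-surface structure. Since $M$ is torsion-free of positive rank over the hyper-surface domain $R$ and $M\otimes_RM$ is torsion-free, the rigidity of $\Tor$ over hyper-surfaces (in the spirit of Huneke--Wiegand, and the mechanism behind the torsion-free hyper-surface case in \cite[Theorem 3.7]{tensor}) yields $\Tor^R_1(M,M)=0$, hence $\Tor^R_i(M,M)=0$ for all $i\geq 1$; and over a hyper-surface the vanishing of all higher self-$\Tor$ of $M$ forces $\pd_R(M)<\infty$ (the complexity of $M$ is at most one). Now $M$ is a torsion-free module of finite projective dimension over a local ring of depth $2$ with $M^{\otimes 2}$ torsion-free, so Remark \ref{t2} applies and $M$ is free. (Alternatively, once $\pd_R(M)<\infty$, one has $\Tor^R_{+}(M,M)=0$, so Fact \ref{2au}.B) and Auslander--Buchsbaum give $\depth(M^{\otimes 2})=2\depth(M)-2$; as $M^{\otimes 2}$ is a nonzero torsion-free module over a local domain its depth is $\geq 1$, whence $2\depth(M)\geq 3$, so $\depth(M)=2$ and $\pd_R(M)=0$.)

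The step I expect to be the real obstacle is the implication ``$M\otimes_RM$ torsion-free $\Rightarrow\Tor^R_{\geq 1}(M,M)=0\Rightarrow\pd_R(M)<\infty$'': this is exactly where being a hyper-surface is indispensable (rigidity of $\Tor$ together with the complexity bound), and where normality does its work, by guaranteeing that $R$ is a domain with an isolated singularity so that the torsion-free machinery above is available. By contrast the two bookends --- the reduction to $M$ torsion-free and the final passage from $\pd_R(M)<\infty$ to freeness --- are routine, the latter being precisely Remark \ref{t2}.
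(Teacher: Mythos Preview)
Your proof is correct and follows essentially the same route as the paper's: the paper invokes \cite[Proposition~5.2]{Da} to obtain $\Tor^R_{+}(M,M)=0$, then the depth formula over complete intersections gives $2\depth(M)=2+\depth(M\otimes_RM)\geq 3$, hence $\depth(M)=2$, and since Tor-vanishing over a hypersurface forces $\pd(M)<\infty$, Auslander--Buchsbaum finishes---this is exactly your parenthetical alternative at the end. Your preliminary reduction to the torsion-free case is a correct extra step the paper omits, and you cite Huneke--Wiegand rather than Dao for the key Tor-vanishing, but the skeleton is identical.
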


\begin{proof}
 In view of  \cite[Proposition 5.2]{Da} we see  $\Tor^R_{+}(M,M)=0$. Due to the  depth formula we have $
2\depth(M)= 2 + \depth(M \otimes_R M) \geq3.$ It turns out that $\depth(M)=2$.
From $\Tor^R_{+}(M,M)=0$ we deduce that  $\pd(M)<\infty$.
By Auslander-Buchsbaum formula, $M$ is   free.
\end{proof}

For a higher dimensional version, see Corollary \ref{4n}.

\section{Higher  cohomology of tensor products}
This section is divided into 4 subsections:

\subsection{The low-dimensional approach}

  \begin{fact} \label{hw}(See \cite[Theorem 2.4]{tensor2})
Let $R$ be such that its completion is a quotient of equicharacteristic regular local
ring by  a nonzero element. Let $r$ be such that $0\leq r< \dim R$. Assume $M\otimes N$
is $(\Se_{r+1})$ over the punctured spectrum and at least one of them is of constant rank and $\pd(M)<\infty$. Then
 $\HH^{r}_{\fm}( N\otimes_R  M)=0$ and both of $M$ and $N$ has depth at least $r$ if and only if $\depth(N)+\depth(M)\geq \dim R+r +1$.
\end{fact}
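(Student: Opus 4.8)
The plan is to funnel the whole equivalence through the single vanishing
$$(\ast)\qquad \Tor^R_i(M,N)=0\quad\text{for all } i>0,$$
to deduce $(\ast)$ from rigidity of $\Tor$ over a hypersurface together with Auslander's depth theorem, and then to close with the depth formula. First I would pass to the completion and so assume $R=S/(f)$ with $S$ an equicharacteristic regular local ring and $0\ne f$; the invariants $\dim$, $\depth$, $\pd$, the local cohomology modules $\HH^i_\fm(-)$, constancy of rank, and the Serre condition on the punctured spectrum survive this reduction. In particular $R$ is Cohen--Macaulay, so $\depth R=\dim R=d$ and $\pd M=d-\depth M$ by Auslander--Buchsbaum.

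Granting $(\ast)$, Auslander's depth formula (legitimate because $\pd M<\infty$ and $\Tor^R_{>0}(M,N)=0$) gives $\depth(M\otimes_RN)=\depth M+\depth N-d$. Hence ``$\depth M+\depth N\ge d+r+1$'' is literally equivalent to ``$\depth(M\otimes_RN)\ge r+1$''. From left to right this forces $\HH^0_\fm(M\otimes_RN)=\dots=\HH^r_\fm(M\otimes_RN)=0$, and, since $\depth M\le\dim M\le d$ and symmetrically for $N$, it also forces $\depth M,\depth N\ge r+1$; that is the ``$\Leftarrow$'' implication. For the converse, suppose $\HH^r_\fm(M\otimes_RN)=0$ and $\depth M,\depth N\ge r$. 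Then $\depth(M\otimes_RN)\ne r$, while the constant-rank hypothesis together with the non-vanishing of top local cohomology forces $\dim(M\otimes_RN)\ge r+1$; the $(\Se_{r+1})$-hypothesis on the punctured spectrum (via a local-duality check on the finite-length modules $\HH^{<r+1}_\fm(M\otimes_RN)$) then excludes $\depth(M\otimes_RN)\le r-1$, so $\depth(M\otimes_RN)\ge r+1$ and $\depth M+\depth N\ge d+r+1$.

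It remains to prove $(\ast)$. Assume it fails and let $q$ be maximal with $T:=\Tor^R_q(M,N)\ne 0$; then $1\le q\le\pd M<\infty$. The crucial point is that $T$ has finite length: for a prime $\fp\ne\fm$, $R_\fp$ is again a hypersurface with $\pd_{R_\fp}M_\fp<\infty$ and $(M\otimes_RN)_\fp$ is $(\Se_{r+1})$, so a descending induction on $\dim R_\fp$ — at each stage using rigidity of $\Tor$ over the hypersurface $R_\fp$ to promote one vanishing $\Tor^{R_\fp}_i=0$ to the vanishing of all higher $\Tor$ — gives $\Tor^{R_\fp}_{>0}(M_\fp,N_\fp)=0$, whence $\Supp T\subseteq\{\fm\}$ and $\depth T=0$. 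Now the ``Fact A'' recalled inside the proof of Observation~\ref{2au}, applied with $A:=M$ and $B:=N$, yields $\depth N=\depth T+\pd M-q=\pd M-q=d-\depth M-q$, i.e. $\depth M+\depth N=d-q\le d-1$. Under the hypothesis $\depth M+\depth N\ge d+r+1$ this is absurd, so in the ``$\Leftarrow$'' direction $(\ast)$ holds. In the ``$\Rightarrow$'' direction the same finite-length control is in force, and one eliminates $T$ by bringing in $\HH^r_\fm(M\otimes_RN)=0$: either by a syzygy chase in the spirit of Proposition~\ref{g} (with $\fa=\fm$) or by the spectral sequence computing $\HH^\bullet_\fm$ from the finite-length modules $\Tor^R_i(M,N)$, one gets $\Tor^R_1(M,N)=0$, and rigidity over the hypersurface then upgrades this to $(\ast)$.

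The main obstacle I anticipate is exactly the finite-length claim for the top $\Tor$ module: carrying out the localisation and the induction on $\dim R_\fp$, with repeated appeals to rigidity over $R_\fp$, so that $\Supp T\subseteq\{\fm\}$ is genuinely forced, together with the parallel bookkeeping in the converse that turns ``$\HH^r_\fm(M\otimes_RN)=0$ plus $(\Se_{r+1})$ on the punctured spectrum'' into the clean estimate $\depth(M\otimes_RN)\ge r+1$. Once $\Tor$-independence is available, the remaining steps are formal.
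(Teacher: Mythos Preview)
The paper does not supply a proof of this statement: it is recorded as a \emph{Fact} with a bare citation to \cite[Theorem~2.4]{tensor2}, and the text moves on immediately. So there is no ``paper's own proof'' to compare your proposal against.

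As a sketch of the Huneke--Wiegand argument your outline is broadly on the right track --- reduce to a hypersurface, argue that the positive $\Tor$-modules are supported at $\fm$, invoke rigidity and the depth formula --- but two places are too thin to count as a proof. First, your ``descending induction on $\dim R_\fp$'' for the finite-length claim needs an actual inductive statement: you have to explain, for a given $\fp\neq\fm$, \emph{which} $\Tor^{R_\fp}_i$ vanishes to feed into rigidity (this is where the $(\Se_{r+1})$-hypothesis on the punctured spectrum, together with the depth formula in lower dimension, does real work). Second, in the ``$\Rightarrow$'' direction your elimination of $T$ is a hand-wave: neither ``a syzygy chase in the spirit of Proposition~\ref{g}'' nor an unspecified spectral sequence produces $\Tor^R_1(M,N)=0$ without further argument, and the claim that $(\Se_{r+1})$ on the punctured spectrum excludes $\depth(M\otimes_RN)\le r-1$ via ``a local-duality check'' is asserted rather than shown. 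If you want a self-contained proof you will need to spell out those two steps; the original paper \cite{tensor2} is the right place to see how they are actually carried out.
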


 \begin{observation} Let  $(R,\fm)$ be a regular local ring of dimension $2$  and $M$ a torsion-free module. Then $\HH^{1}_{\fm}( M\otimes _RM)=0$  for some $0\leq i<\dim R$
 if and only if $M$ is free.
\end{observation}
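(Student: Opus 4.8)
The strategy is to read off both implications from Fact~\ref{hw} applied with $N:=M$. Put $d:=\dim R=2$. The easy direction is immediate: if $M=0$ or $M$ is free, then $M\otimes_RM$ is free, hence maximal Cohen--Macaulay of depth $d$, so $\HH^i_\fm(M\otimes_RM)=0$ for every $i$ with $0\leq i<d$; in particular such an $i$ exists. It therefore remains to show that if $\HH^i_\fm(M\otimes_RM)=0$ for some $i\in\{0,1\}$, then $M$ is free; the subcase $i=0$ is Observation~\ref{2au}, but in fact both subcases follow uniformly from Fact~\ref{hw}, and we may assume $M\neq 0$.

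For the remaining direction I would first record the relevant properties of a nonzero torsion-free module $M$ over the two-dimensional regular local domain $R$: it has positive depth (since $\fm\notin\Ass_R(M)=\{(0)\}$), it is of constant rank, it has finite projective dimension, and it is locally free on the punctured spectrum --- at a height-one prime $\fp$ the local ring $R_\fp$ is a DVR and a torsion-free module over a DVR is free. Consequently $M\otimes_RM$ is locally free, hence in particular $(\Se_{i+1})$, over $\Spec(R)\setminus\V(\fm)$. Thus every hypothesis of Fact~\ref{hw} is met with $r:=i$, and the extra requirement there that $M$ have depth at least $r$ holds automatically because $\depth M\geq 1\geq i$. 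Fact~\ref{hw} then gives
\[
\depth M+\depth M\ \geq\ \dim R+i+1\ =\ i+3\ \geq\ 3 .
\]
Since $\depth M$ is an integer, $2\,\depth M\geq 3$ forces $\depth M\geq 2=\dim R$, so $M$ is maximal Cohen--Macaulay over the regular local ring $R$; hence $M$ is free, as desired.

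I do not anticipate a genuine obstacle: the content reduces to the (entirely routine) verification that the hypotheses of Fact~\ref{hw} hold, together with the trivial rounding step $2\,\depth M\geq 3\Rightarrow\depth M\geq 2$. Alternatively, the case $i=1$ can be attacked by hand along the lines of Observation~\ref{2au}: if $M$ is non-free one obtains $\pd M=1$, then $\Tor^R_1(M,M)=0$ from Auslander's depth formula (Fact~\ref{2au}.A)), whence $\pd(M\otimes_RM)=2$ by Fact~\ref{2au}.B) and $\depth(M\otimes_RM)=0$ by Auslander--Buchsbaum; but passing from this to $\HH^1_\fm(M\otimes_RM)\neq 0$ appears to require additional control of the torsion-free quotient of $M\otimes_RM$, so routing the argument through Fact~\ref{hw} is the cleaner choice.
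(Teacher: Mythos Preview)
Your argument is correct and follows essentially the same route as the paper: both proofs invoke Fact~\ref{hw} for the case $i=1$, and the paper defers the case $i=0$ to Observation~\ref{2au} while you observe (correctly) that Fact~\ref{hw} already covers $i=0$ as well, which is a mild streamlining. Your verification of the hypotheses of Fact~\ref{hw} (constant rank, locally free on the punctured spectrum, $\depth M\geq 1\geq r$) is clean and complete, and the rounding step $2\depth M\geq 3\Rightarrow\depth M=2$ is exactly what the paper uses in the analogous three-dimensional Corollary that follows.
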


\begin{proof}The case $i=0$ is in Observation \ref{2au}. The case $i=1$ is in the above fact.
\end{proof}

It may be natural to extend the above result to 3-dimensional case by replacing torsion-free with the reflexive modules. This is not the case:

 \begin{corollary}
 Let  $(R,\fm)$ be a regular local ring of dimension $3$  and $M$ a reflexive module.
 \begin{enumerate}
\item[i)] Always $\HH^{0}_{\fm}( M\otimes _RM)=0$.
\item[ii)] If $\HH^{i}_{\fm}( M\otimes _RM)=0$ for some $0<i<3$, then $M$ is free.
\end{enumerate}
\end{corollary}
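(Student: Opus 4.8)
The plan is to read both parts off Fact \ref{hw}, after recording the structural consequences of reflexivity. Since $R$ is regular, $\pd_R(M)<\infty$. Since $M$ is reflexive it satisfies Serre's condition $(\Se_2)$, so $\depth_R(M)\geq\min\{2,\dim R\}=2$; and for every non-maximal prime $\fp$ the localization $M_\fp$ is reflexive over the regular ring $R_\fp$ with $\dim R_\fp\leq 2$, hence maximal Cohen--Macaulay and therefore free. Thus $M$, and with it $M\otimes_R M$, is locally free over $\Spec(R)\setminus\V(\fm)$ (so a fortiori $(\Se_{r+1})$ there for every $r$). Finally $M$ is torsion-free over the domain $R$, so $M$ has constant rank.

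Now I would apply Fact \ref{hw} with $N:=M$ and $r:=i$. A regular local ring qualifies for that fact (up to completion it is a hypersurface), $M\otimes_R M$ is $(\Se_{i+1})$ over the punctured spectrum, $M$ has constant rank, and $\pd(M)<\infty$. For $i=0$ the ``depth at least $0$'' clause is vacuous, so Fact \ref{hw} reads: $\HH^0_{\fm}(M\otimes_R M)=0$ if and only if $2\,\depth_R(M)\geq\dim R+1=4$; the inequality holds because $\depth_R(M)\geq 2$, which gives (i).

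For (ii), suppose $\HH^i_{\fm}(M\otimes_R M)=0$ with $i\in\{1,2\}$. Since $\depth_R(M)\geq 2\geq i$, the condition ``$\HH^i_{\fm}(M\otimes_R M)=0$ and both copies of $M$ have depth $\geq i$'' is met, so the ``only if'' direction of Fact \ref{hw} forces $2\,\depth_R(M)\geq\dim R+i+1=i+4\geq 5$, whence $\depth_R(M)\geq 3=\dim R$. Thus $M$ is maximal Cohen--Macaulay, and Auslander--Buchsbaum gives $\pd(M)=0$; that is, $M$ is free.

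The argument is essentially bookkeeping; the places to be careful are checking that Fact \ref{hw} genuinely applies to a regular ring and that the $(\Se_{i+1})$ hypothesis on $M\otimes_R M$ over the punctured spectrum is automatic once one knows $M$ is locally free there -- a sloppier argument could stumble on exactly these. If one prefers to avoid Fact \ref{hw}, an alternative for (ii) mimics Proposition \ref{g}: the syzygy sequence $0\to\Syz_1(M)\to R^{\beta_0(M)}\to M\to 0$ shows $\Tor_1^R(M,M)$ has finite length, and a positive-depth argument forces $\Tor_1^R(M,M)=0$; since $\pd(M)\leq 1$ this already yields $\Tor_+^R(M,M)=0$, so the depth formula of Fact \ref{2au}.B) together with Auslander--Buchsbaum gives $\depth_R(M\otimes_R M)=2\,\depth_R(M)-3$, equal to $1$ when $M$ is not free (this settles $i=1$), while $i=2$ follows by computing $\HH^2_{\fm}$ along $0\to M^{\beta_1(M)}\to M^{\beta_0(M)}\to M\otimes_R M\to 0$ and noting that, by local duality, the Auslander transpose datum $\Ext^1_R(M,R)$ obstructs its vanishing unless $M$ is free.
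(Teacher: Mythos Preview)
Your proof is correct and, for part (ii), coincides with the paper's argument verbatim: verify that $M$ is locally free on the punctured spectrum (since reflexive modules over regular local rings of dimension $\leq 2$ are free), then apply Fact \ref{hw} with $r=i$ to get $2\depth(M)\geq \dim R+i+1\geq 5$, whence $\depth(M)=3$ and Auslander--Buchsbaum finishes.

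The only divergence is in part (i). You invoke Fact \ref{hw} with $r=0$, reading off $\HH^0_{\fm}(M\otimes_R M)=0$ from $2\depth(M)\geq 4$; the paper instead cites Proposition \ref{g} with $\fa=\fm$ and $r=0$, using the same inequality $\grade(\fm,M)+\grade(\fm,M)\geq d+1$. Both routes are valid and rest on the same numerical input $\depth(M)\geq 2$. Proposition \ref{g} has the mild advantage of not requiring any hypersurface hypothesis on the completion (so no equicharacteristic caveat), while your route keeps the whole corollary under a single reference. Either way the content is identical.

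Your alternative sketch for (ii) is fine for $i=1$ (indeed $\Tor_+^R(M,M)=0$ and the depth formula give $\depth(M\otimes_R M)=1$ when $M$ is not free), but the $i=2$ clause via $\Ext^1_R(M,R)$ and local duality is only gestured at; since this is offered as an aside and your primary argument via Fact \ref{hw} already handles $i=2$, this does not affect correctness.
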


\begin{proof}
The first item is in Proposition \ref{g}. We may assume that $i>0$ and that $M\neq 0$.  Reflexive modules over 2-dimensional regular
local rings are free. From this, $M$ is  locally free over the punctured spectrum.
We apply Fact \ref{hw} for $r=i$, to see that $2\depth(M)\geq \dim R+i+1\geq 5$. That is $2<\frac{5}{2}\leq\depth(M)\leq \dim(M)\leq 3$.
Thus, $\depth(M)= 3$.  Due to Auslander-Buchsbaum, $M$ is free.
\end{proof}

Without any  restriction on the dimension,  assume $\HH^{1}_{\fm}( M\otimes _RM)=0$. What can say about freeness of $M$? We will answer this in Example \ref{41}.
In  view of \cite[Example 1.8]{tensor2} there is a non-free ideal $I$  of $R:=\frac{k[[x,y,z,w]]}{(xy-uv)}$ such that $I\otimes I^\ast$ is torsion-free.

\begin{example}
Let $(R,\fm,k)$ be a local  ring  of depth at least $3$.
 Then  i) $\fm\otimes_R \fm^\ast$ is torsion-free, ii) $\fm$ is locally free and non-free, and  iii)
 $\HH^{2}_{\fm}(\fm\otimes_R \fm^\ast)=0$.
\end{example}

\begin{proof} Clearly $\fm$ is non-free and locally free, and that
$\Ext^{<3}_R(k,R)=\HH^{<3}_{\fm}(R)=0$.
We look at $0\to \fm \to R\to k\to 0 \quad(\ast)$. It yields that $0=k^\ast\to \fm^\ast\to R^\ast\to\Ext^1_R(k,R)=0$, i.e.,
$\fm^\ast\simeq R$. Also, $(\ast)$ implies that
$0=\HH^1_{\fm}(k)\to\HH^2_{\fm}(\fm)\to \HH^2_{\fm}(R)=0$. So,  $\HH^{2}_{\fm}(\fm\otimes_R \fm^\ast)\simeq\HH^2_{\fm}(\fm)=0$.
\end{proof}

\subsection{The regular case}
 Let $M$ be a surjective Buchsbaum $A$-module of finite projective dimension
	and $N$ a maximal surjective Buchsbaum $A$-module. Yoshida asked in \cite[Question 3.5]{yosh2} when is $M \otimes N$
	surjective Buchsbaum? He presented a negative answer to this by using a beautiful criterion of Yamagishi and some results of Strooker, Goto and Kawasaki, see \cite[ 3.7]{yosh2}.
	In his example, $\depth(M)+\depth(N) \geq \dim  A$. Thus, the particular case of Proposition \ref{main5b}(i) yields a new negative answer to \cite[Question 3.5]{yosh2}. 

\begin{proposition}\label{main5b}
Let $(R,\fm,k)$ be a regular  local ring and $M$ be an indecomposable Buchsbaum module of dimension $d$ which is not
Cohen-Macaulay.
\begin{enumerate}
\item[i)] If $\depth(M)=1$, then \begin{equation*}
\h^i(M\otimes _RM)=\left\{
\begin{array}{rl}
{d}\choose{2}& \  \   \   \   \   \ \  \   \   \   \   \ \text{if }\   \   i=0\\
d+1 & \  \   \   \   \   \ \  \   \   \   \   \ \text{if } \   \ i=1\\
0& \  \   \   \   \   \ \  \   \   \   \   \ \text{if }\   \  2\leq i<  d
\end{array} \right.
\end{equation*}In particular, $M\otimes_R M$ is  not Buchsbaum.
\item[ii)] If $d>3$ and $M$ is almost Cohen-Macaulay, then
\begin{equation*}
\h^i(M\otimes_R M^\ast)=
\left\{
\begin{array}{rl}
0&    \   \   \ \ \text{ if } \   \ i\in\{0\}\cup[3,d-2]\\
1&    \   \   \    \ \text{ if } \   \ i=1\\
d&     \   \   \    \ \text{ if }\   \ i=2\ \ or \ \ i=d-1
\end{array}\right.
\end{equation*} 
and $M\otimes_R M^\ast$ is quasi-Buchsbaum. In particular, and against to
$M$ and $M^\ast$, $M\otimes_R M^\ast$ is not Buchsbaum.
\end{enumerate}
\end{proposition}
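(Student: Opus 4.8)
The plan is to use the rigidity of the hypotheses to identify $M$ up to isomorphism, and then reduce everything to bookkeeping in long exact sequences of local cohomology. The key input is Goto's classification of indecomposable maximal Buchsbaum modules over a regular local ring: the only ones are $R$ and the syzygy modules $\Syz_1(k),\dots,\Syz_{d-1}(k)$. Since $M$ is indecomposable, Buchsbaum of dimension $d$, and not Cohen--Macaulay, this forces $M\cong\Syz_j(k)$ with $j=\depth(M)$. From the short exact sequences $0\to\Syz_{t}(k)\to R^{\beta_{t-1}(k)}\to\Syz_{t-1}(k)\to 0$ one reads off $\HH^i_\fm(\Syz_j(k))\cong k$ for $i=j$ and $\HH^i_\fm(\Syz_j(k))=0$ for $i<d$ with $i\ne j$; recall also that each $\Syz_j(k)$ is Buchsbaum.

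For part (i), $j=1$, so $M\cong\fm$. Tensoring $0\to\fm\to R\to k\to 0$ by $\fm$ and dimension-shifting ($\Tor_1^R(k,\fm)\cong\Tor_2^R(k,k)\cong k^{\binom{d}{2}}$, the last using regularity) produces the exact sequence $0\to k^{\binom{d}{2}}\to\fm\otimes_R\fm\to\fm^2\to 0$. As $R$ is a domain, $\fm^2$ is torsion-free, so $\HH^0_\fm(\fm\otimes_R\fm)\cong k^{\binom{d}{2}}$ and $\HH^i_\fm(\fm\otimes_R\fm)\cong\HH^i_\fm(\fm^2)$ for $i\ge 1$. Then $0\to\fm^2\to R\to R/\fm^2\to 0$ together with $\HH^{<d}_\fm(R)=0$ gives $\HH^1_\fm(\fm^2)\cong R/\fm^2$ (of length $d+1$) and $\HH^i_\fm(\fm^2)=0$ for $2\le i<d$; this is exactly the table in (i). Since $\fm\cdot(R/\fm^2)\ne 0$, the module $\fm\otimes_R\fm$ fails to be quasi-Buchsbaum, hence is not Buchsbaum.

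For part (ii), $j=d-1$, so $M\cong\Syz_{d-1}(k)$, which from the Koszul resolution of $k$ has the presentation $0\to R\stackrel{(x_i)}\lo R^{d}\to M\to 0$, where $x_1,\dots,x_d$ is a regular system of parameters. Dualizing this presentation identifies $M^{\ast}$ with the first syzygy of the ideal $(x_1,\dots,x_d)=\fm$, i.e. $M^{\ast}\cong\Syz_2(k)$; consequently $\HH^i_\fm(M^{\ast})\cong k$ for $i=2$ and $=0$ for $i<d$, $i\ne 2$, and $M$ is reflexive (a $(d{-}1)$st syzygy, $d-1\ge 2$), so $M^{\ast\ast}\cong M$. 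Now apply $-\otimes_R M^{\ast}$ to the presentation: the induced map $M^{\ast}\stackrel{(x_i)}\lo (M^{\ast})^{d}$ has kernel $(0:_{M^{\ast}}\fm)=0$ since $\depth(M^{\ast})=2>0$, hence $\Tor_1^R(M,M^{\ast})=0$ and $0\to M^{\ast}\to (M^{\ast})^{d}\to M\otimes_R M^{\ast}\to 0$ is exact. In the associated long exact sequence of $\HH^{\bullet}_\fm$, the maps $\HH^i_\fm(M^{\ast})\to\HH^i_\fm(M^{\ast})^{d}$ are multiplication by $(x_1,\dots,x_d)$, so they vanish for $i<d$ by Buchsbaumness of $M^{\ast}$; therefore $\h^i(M\otimes_R M^{\ast})=d\,\h^i(M^{\ast})+\h^{i+1}(M^{\ast})$ for $0\le i\le d-2$, which yields the values $0,1,d,0,\dots,0$ for $i=0,1,2,\dots,d-2$ — here the hypothesis $d>3$ is what makes $\HH^3_\fm(M^{\ast})=0$. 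For the remaining degree $i=d-1$ the tail of the long exact sequence gives $\HH^{d-1}_\fm(M\otimes_R M^{\ast})\cong\ker\!\big(\HH^d_\fm(M^{\ast})\stackrel{(x_i)}\lo\HH^d_\fm(M^{\ast})^{d}\big)=(0:_{\HH^d_\fm(M^{\ast})}\fm)$, whose length equals, by local duality ($\HH^d_\fm(M^{\ast})^{\vee}\cong M^{\ast\ast}\cong M$), $\mu(M)=\beta_{d-1}(k)=d$. This proves the table in (ii), and since all $\HH^i_\fm(M\otimes_R M^{\ast})$ with $i<d$ are $k$-vector spaces, $M\otimes_R M^{\ast}$ is quasi-Buchsbaum.

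The point needing the most care is the last assertion of (ii): $M\otimes_R M^{\ast}$ is quasi-Buchsbaum, so ruling out Buchsbaumness requires a finer criterion. The plan is to invoke the standard fact that, in an exact sequence $0\to A\to B\to C\to 0$ with $B$ Buchsbaum, $C$ can be Buchsbaum only if every connecting homomorphism $\HH^i_\fm(C)\to\HH^{i+1}_\fm(A)$ with $i<\dim C$ vanishes. Applied to $0\to M^{\ast}\to(M^{\ast})^{d}\to M\otimes_R M^{\ast}\to 0$ (whose middle term is Buchsbaum), the computation above shows $\HH^1_\fm(M\otimes_R M^{\ast})\to\HH^2_\fm(M^{\ast})$ is an isomorphism of one-dimensional spaces, hence nonzero, so $M\otimes_R M^{\ast}$ is not Buchsbaum. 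The only genuinely non-formal steps are thus recording Goto's classification in precisely the form needed and citing (or reproving) this connecting-map criterion; everything else is diagram chasing.
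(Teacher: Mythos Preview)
Your computations of the $\h^i$ in both parts are correct, and in several places cleaner than the paper's. In part (i) you go straight to $0\to k^{\binom{d}{2}}\to\fm\otimes_R\fm\to\fm^2\to 0$ and $0\to\fm^2\to R\to R/\fm^2\to 0$, whereas the paper proves a more general claim for $I\otimes_A\fn$ with $I$ primary and then specializes; your ``not Buchsbaum'' argument in (i) is also simpler, since $\HH^1_\fm(\fm\otimes_R\fm)\cong R/\fm^2$ is visibly not killed by $\fm$, so the module is not even quasi-Buchsbaum (the paper instead invokes Goto's decomposition theorem and compares ranks). In part (ii) you extract everything from the single sequence $0\to M^\ast\to(M^\ast)^d\to M\otimes_RM^\ast\to 0$ and the Buchsbaum property of $M^\ast$, which is more uniform than the paper's route through the Auslander transpose, the Bruns--Vetter isomorphism $\HH^{i+1}_\fm(N\otimes L^\ast)\cong\Ext^i(L,N)$, and the Huneke--Wiegand duality $\HH^j_\fm(A\otimes B)^\vee\cong\HH^{d+1-j}_\fm(A^\ast\otimes B^\ast)$.

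The genuine gap is in your final step for (ii). The ``standard fact'' you invoke --- that for $0\to A\to B\to C\to 0$ with $B$ Buchsbaum, $C$ Buchsbaum forces the connecting maps $\HH^i_\fm(C)\to\HH^{i+1}_\fm(A)$ to vanish for $i<\dim C$ --- is false. A counterexample sits inside your own setup: take $0\to\Syz_2(k)\to R^d\to\fm\to 0$ over a regular local ring of dimension $d\ge 3$. Here $B=R^d$ is Cohen--Macaulay, $C=\fm=\Syz_1(k)$ is Buchsbaum (Goto), yet the connecting map $\HH^1_\fm(\fm)\to\HH^2_\fm(\Syz_2(k))$ is an isomorphism $k\to k$, exactly as in your own computation with $M^\ast$ in place of $\Syz_2(k)$. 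So a nonzero connecting map cannot by itself rule out Buchsbaumness of the cokernel.

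The paper closes this gap differently: it applies Goto's structure theorem for Buchsbaum modules over a regular local ring, which would force $M\otimes_RM^\ast\cong\Syz_1(k)\oplus\Syz_2(k)^d\oplus\Syz_{d-1}(k)^d\oplus R^n$ for some $n\ge 0$, and then compares ranks. Since $\rank(M)=\rank(M^\ast)=d-1$, the left side has rank $(d-1)^2$, while the right side has rank $1+d(d-1)+d(d-1)+n=1+2d(d-1)+n$; subtracting gives $n=-d^2<0$, a contradiction. You already have all the ingredients for this rank count, so you can substitute it directly for your connecting-map criterion.
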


\begin{proof} i) First, we state a more general claim:
\begin{enumerate}
\item[Claim A)] Let $(A,\fn,k)$ be a   Cohen-Macaulay local ring of dimension at least two and $I\lhd A$ be $\fn$-primary.
Then\begin{equation*}
\h^i(I\otimes_A \fn)=\left\{
\begin{array}{rl}
\beta_2(A/I)& \  \   \   \   \   \ \  \   \   \   \   \ \text{if } \   \  i=0\\
\mu(I)+\ell(A/I) & \  \   \   \   \   \ \  \   \   \   \   \ \text{if }\   \  i=1\\
0& \  \   \   \   \   \ \  \   \   \   \   \ \text{if } \   \ 2\leq i<  \dim A\\
\end{array} \right.
\end{equation*}\end{enumerate}Indeed,
let $d:=\dim A$.
 We look at $0\to \fn \to A\to k\to 0$
 and we drive the following exact sequence$$0\lo \Tor_1^A(k,I)\lo I\otimes_A \fn\lo I\lo I\otimes_A k\lo 0\quad(\ast)$$
Recall that $I\otimes_A k\simeq\frac{I}{I\fn}\simeq k^{\mu(I)}$ and $\Tor_1^A(k,I)\simeq\Tor_2^A(k,A/I)\simeq k^{\beta_2(A/I)}$.
We break down
$(\ast)$ into
 a) $0\to k^{\beta_2(A/I)}\to I\otimes_A \fn\to L\to 0$ and
b) $0\to L\to  I\to  k^{\mu(I)}\to 0$.
We conclude from a) the exact sequence
  $0\to \HH^0_{\fn}(k^{\beta_2(A/I)})\to \HH^0_{\fn}(I\otimes_A \fn)\to \HH^0_{\fn}(L)$.
It follows from b) that the sequence $0\to \HH^0_{\fn}(L)\to \HH^0_{\fn}(I)=0$ is exact. We combine these  to see $$\ell(\HH^0_{\fn}(I\otimes_R \fn))=\ell(\HH^0_{\fn}(k^{\beta_2(A/I)}))={\beta_2(A/I)}.$$
From  a) we have $\HH^1_{\fn}(I\otimes_R \fn)\simeq \HH^1_{\fn}(L)$.
 From b),  $$0=\HH^0_{\fn}(I)\lo \HH^0_{\fn}(k^{\mu(I)})\lo \HH^1_{\fn}(L)\simeq\HH^1_{\fn}(I\otimes_R \fn)\lo \HH^1_{\fn}(I)\lo \HH^1_{\fn}(k^{\mu(I)})=0.$$
 In order to compute $\HH^1_{\fn}(I)$, we look at $0\to I\to A\to A/I\to 0$. This induces
 $0=\HH^0_{\fn}(A)\to\HH^0_{\fn}(A/I)\to \HH^1_{\fn}(I) \to \HH^1_{\fn}(A)= 0 .$
 Thus, $\HH^1_{\fn}(I)\simeq\HH^0_{\fn}(A/I)=A/I$.
We put all of these together to see $$0\lo k^{\mu(I)}\lo \HH^1_{\fn}(I\otimes_A \fn)\lo A/I\lo0.$$
We conclude that $\h^1(I\otimes_A \fn)=\mu(I)+\ell(A/I)$.
Let $2\leq i<  d$.
Recall that $$\HH^i_{\fn}(I\otimes_A \fn)\simeq\HH^i_{\fn}(L)\simeq\HH^i_{\fn}(I).$$
 We look at $0=\HH^{i-1}_{\fn}(A/I)\to \HH^i_{\fn}(I) \to \HH^i_{\fn}(A)= 0 $ to deduce that  $\HH^i_{\fn}(I\otimes_A \fn)\simeq\HH^i_{\fn}(I)=0$.
This completes the proof of Claim A).
Recall from \cite[Corollary (3.7)]{goto} that:
\begin{enumerate}
\item[Fact A)] Let $(A,\fn)$ be a regular local ring and $P$ be an indecomposable Buchsbaum module of maximal
dimension. Then $P\simeq \Syz_{i}(\frac{A}{\fn})$ where $i=\depth(P)$.
\end{enumerate}
In the light of Fact A) we see $M=\Syz_{1}(k)=\fm$. Note that $\beta_2(k)$ is equal to ${d}\choose{2}$
and $\mu(\fm)=d$. It follows by the assumptions that
$\dim(R)\geq2$. Claim A) yields that: \begin{equation*}
\h^i(M\otimes _RM)=\left\{
\begin{array}{rl}
{d}\choose{2}& \  \   \   \   \   \ \  \   \   \   \   \ \text{if }\   \   i=0\\
d+1 & \  \   \   \   \   \ \  \   \   \   \   \ \text{if } \   \ i=1\\
0& \  \   \   \   \   \ \  \   \   \   \   \ \text{if }\   \  2\leq i<  d
\end{array} \right.
\end{equation*}
 To see the particular case,
we recall from \cite[Theorem (1.1)]{goto} that:
\begin{enumerate}
\item[Fact B) ] Let $(A,\fn)$ be a regular local ring and $P$ be  Buchsbaum. Then $P\simeq\bigoplus_{0\leq i\leq \dim (A)}\Syz_i(\frac{A}{\fn})^{\h^i}$ where $\h^i:=\h^i(P)$ for all $0\leq i< \dim A$.
\end{enumerate}Suppose on the contradiction that $M\otimes_R M$ is   Buchsbaum.
Due to Fact B),
$M\otimes_R M\simeq\bigoplus_{0\leq i\leq d}\Syz_i(k)^{\h^i}$ where $\h^i:=\h^i(M\otimes_R M)$ for $i\neq d$.  It turns out that  $$M\otimes_R M\stackrel{(\natural)}\simeq k^{{d}\choose{2}}\bigoplus\Syz_{1}(k)^{\oplus (d+1)}\bigoplus R^{n}$$ for some $n\geq0$.
Since $M\simeq\fm$, we see the rank of left hand side of $(\natural)$ is one. The rank of
right hand side is $0+(d+1)+n$.  Since $n\geq 0$, we get to a contradiction. So, $M\otimes_R M$ is  not Buchsbaum.

ii) We recall that $M$ is called almost Cohen-Macaulay if $\depth(M)\geq\dim( M)-1$.
Since $M$ is not Cohen-Macaulay,  $\depth(M)=\dim (M)-1=d-1$.
In the light of Fact A),  $M=\Syz_{d-1}(k)$.  Since $M$ is locally free, $\Tor_1^R(M,M^\ast)$ is of finite length.
 We look at $0\to R \to R^d\to M\to 0$
 and we drive the following exact sequence$$0\lo \Tor_1^R(M,M^\ast)\lo M^\ast  \lo (M^\ast)^d\lo M\otimes _RM^\ast\lo 0.$$
We break down
it into
  $0\to \Tor_1^R(M,M^\ast)\to M^\ast\to L\to 0$  and
  $0\to L\to  (M^\ast)^d\to M\otimes_R M^\ast\to 0$.
It follows from the first  sequence that
  $$0=\HH^1_{\fm}(\Tor_1^R(M,M^\ast))\to \HH^1_{\fm}(M^\ast)\to \HH^1_{\fm}(L)\to\HH^2_{\fm}(\Tor_1^R(M,M^\ast))=0.$$Similarly, $\HH^{+}_{\fm}(M^\ast)\simeq\HH^{+}_{\fm}(L)$. Recall that $M^\ast$
  is reflexive. In particular it is $(\Se_2)$. So, $ \HH^{1}_{\fm}(L)\simeq\HH^{1}_{\fm}(M^\ast)=0$.
It follows from the second short exact sequence that $$0=\HH^0_{\fm}((M^\ast)^d)\to \HH^0_{\fm}(M\otimes_R M^\ast)\to \HH^{1}_{\fm}(L)=0.$$ From this, $\h^0(M\otimes _RM^\ast)=0$.
\begin{enumerate}
\item[Fact C)]  (See \cite[Proposition A.1]{ag}) Let $A$ be a ring, a necessarily and sufficient condition for which $P$ be projective
is that   $\varphi_P:P\otimes_AP^\ast\to \Hom_A(P,P)$ is (surjective)  isomorphism.
\end{enumerate}

 Since $M$ is locally free, it follows from Fact C) that  $K:=\ker(\varphi_{M})$ and
$C:=\coker(\varphi_{M})$ are of finite length and that $C\neq 0$. From this,  $\HH^0_{\fm}(C)=C\neq 0$, $\HH^+_{\fm}(C)= \HH^+_{\fm}(K)= 0$.
 We look at   $0\to K\to M\otimes_R  M^\ast\to\im(\varphi_{M})\to 0 $ and
$0\to\im(\varphi_{M})\to \Hom_R(M,M) \to C\to 0 $. Since $\depth(M)>1$ another result of Auslander-Goldman (\cite[Proposition 4.7]{ag})
says that $\depth(\Hom_R(M,M) )>1$, i.e., $\HH^0_{\fm}(\Hom_R(M,M))=\HH^1_{\fm}(\Hom_R(M,M))=0$.
We apply this along with the long exact sequences of local cohomology modules to see
\begin{enumerate}
\item[ ] $0=\HH^1_{\fm}(K)\to \HH^1_{\fm}(M\otimes _RM^\ast)\to\HH^1_{\fm}(\im(\varphi_M))\to \HH^2_{\fm}(K)=0$
\item[ ] $0= \HH^0_{\fm}(\Hom_R(M,M))\lo\HH^0_{\fm}(C)\lo\HH^1_{\fm}(\im(\varphi_M))\lo \HH^1_{\fm}(\Hom_R(M,M))=0,$
\end{enumerate}e.g.,   $$\HH^1_{\fm}(M\otimes _RM^\ast)\simeq\HH^1_{\fm}(\im(\varphi_M))\simeq\HH^0_{\fm}(C) \simeq C\simeq \Tor^R_1(D(M),M),$$ because
$\coker(\varphi _M)=\Tor^R_1 (D(M),M).$
Let $\fm=(x_1,\ldots, x_d)$.  In view of $0\to R \stackrel{(x_1,\ldots, x_d)}\lo R^d\to M\to 0$ we see $$D(M)= \coker\left(R^d\stackrel{(x_1,\ldots, x_d)}\lo R\right)=\frac{R}{\fm}.$$
Also, $$\Tor^R_1(D(M),M)\simeq \Tor^R_1(k,\Syz_{d-1}(k))=\Tor_d^R(k,k)=k.$$
Combining these, $\h^1(M\otimes_R M^\ast)=\ell(\Tor^R_1(D(M),M))=1$. Also, $\fm\HH^1_{\fm}(M\otimes _RM^\ast)=0$.
\begin{enumerate}
\item[Fact D)] (See \cite[Proposition 4.1]{bv}) Let $(A,\fn)$ be a local ring, $L$ be
 locally free and $N$ be of depth at least $3$. Then
$\Ext^i_A (L, N)\simeq \HH^{i+1}_{\fm}( N\otimes_A  L^{\ast})$ for all $1\leq i\leq \depth(N)-2$.
\end{enumerate} By this
 $\HH^2_{\fm}(M\otimes_R M^\ast)\simeq\Ext^1_R(M,M)$, because $\depth(M)=d-1\geq3$.
Apply $\Hom_R(-,M)$ to   $0\to R \to R^d\to M\to 0$ to see $$0\to \Hom_R(M,M)  \to \Hom_R(R^d,M)\to \Hom_R(R,M)\to \Ext^1_R(M,M)\to 0.$$
Thus,
$$\HH^2_{\fm}(M\otimes_R M^\ast)\simeq\Ext^1_R(M,M)=\coker\left(M^d\stackrel{(x_1,\ldots, x_d)}\lo M\right)=\frac{M}{\fm M}.$$ Hence,
$$\h^2(M\otimes_R M^\ast)=\ell(\frac{M}{\fm M})=\mu(M)= \beta_{d-1}(k)=d.$$ Also, $\fm\HH^2_{\fm}(M\otimes _RM^\ast)=0$.

Let $3\leq i\leq  d-2$.  Due to  Fact D) we know that
 $\HH^{i}_{\fm}(M\otimes_R M^\ast)\simeq \Ext^{i-1}_R(M,M)=0$, because $\pd(M)=1$.
Thus,
$\h^i(M\otimes_R M^\ast)=0$.

Here, we compute $\h^{d-1}(M\otimes_R M^\ast)$. To this end,
we recall from  \cite[Proposition 4.1]{tensor2} that:
\begin{enumerate}
\item[Fact E)] Let  $A$ and $B$ be
 locally free over  a regular local ring $(S,\fn)$ of dimension $d\geq 3$ and let $2\leq j\leq d-1$. Then
 $\HH^j_{\fn}(A\otimes_S B)^v\simeq \HH^{d+1-j}_{\fn}(A^{\ast}\otimes_S B^{\ast})$, where  $(-)^v$ is the Matlis duality.
\end{enumerate} Since  $d-1\geq2$, $\Syz_{d-1}(k)$  is a second syzygy, it is reflexive. Also, $\ell((-)^v)=\ell(-)$. We use these  to see $$\h^{d-1}(M\otimes_R M^\ast)=
\ell(\HH^{d-1}_{\fm}(M\otimes_R M^\ast)^v)=\ell(\HH^{2}_{\fm}(M^\ast\otimes_R M^{\ast\ast}))=\ell(\HH^{2}_{\fm}(M^\ast\otimes_R M))=d.$$Since
Matlis duality
preserves
 the annihilator we deduce that $\fm\HH^{d-1}_{\fm}(M^\ast\otimes_R M)=0$.

We proved  that $\fm\HH^{<d}_{\fm}(M\otimes_R M^\ast)=0$. By definition, $M\otimes_R M^\ast$ is quasi-Buchsbaum.
In view of $0\to R \to R^d\to M\to 0$ we see
$0\to M^\ast\to R^d\to R$ is exact. Thus, $M^\ast=\Syz_2(R/ \fm)$ which is Buchsbaum. Note that $\rank(M)=\rank(M^\ast)=d-1$, because $0\to M^\ast\to R^d\to \fm \to 0$.
Thus, $\rank(M\otimes_R M^\ast)=(d-1)^2.$
Also, $\rank(\Syz_1(k))=1$, because $\Syz_1(k)=\fm$.
Suppose on the contradiction that $M\otimes_R M^\ast$ is Buchsbaum.
Due to Fact B)  there is an $n\geq 0$ such that $$M\otimes_R M^\ast=\Syz_1(k)\bigoplus\Syz_{2}(k)^{\oplus d}\bigoplus \Syz_{d-1}(k)^{\oplus d}\bigoplus R^{n}.$$ The left hand side is a vector bundle
of rank $(d-1)^2$. The right hand side is a vector bundle
of rank $1+d (d-1)+d (d-1)+n$. Since $n\geq 0$, we get to a contradiction. Thus,
$M\otimes_R M^\ast$ is not  Buchsbaum.
\end{proof}
The above result has a role in \cite{acs}.
Over a regular local ring $(R,\fm)$ of dimension $d>1$, Auslander  was looking for a vector bundle $M$ without free summand of dimension $d$
 such that $\pd(M)=\pd(M^\ast)$ and $\HH^{0}_{\fm}( M\otimes _RM^\ast)=0$.
He proved  the existence of $M$ is equivalent to the oddness of $d$.

 \begin{corollary} \label{ausbun}Let  $(R,\fm,k)$ be a regular local ring of odd dimension $d$  and $M$ be as above. If $M$ is Buchsbaum,  then $M\simeq \Syz_{\frac{d+1}{2}}(k)^{\oplus m} $
 for some $m$.
\end{corollary}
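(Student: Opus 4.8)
The plan is to reduce the statement, via Goto's structure theory of Buchsbaum modules over regular local rings, to a short arithmetic argument on syzygy indices. Throughout I write $\Syz_i(k)$ for the $i$-th syzygy of $k$ in its minimal (Koszul) free resolution; by Auslander--Buchsbaum applied to the truncated Koszul complex one has $\pd\Syz_i(k)=d-i$ and $\depth\Syz_i(k)=i$ for $0\le i\le d$, where $\Syz_0(k)=k$ and $\Syz_d(k)\cong R$ is free of rank one. Three ingredients will be needed. (1) Since $M$ is Buchsbaum over the regular ring $R$ and has no free summand, Goto's theorem (see \cite[Theorem (1.1)]{goto}, which appears as Fact B in the proof of Proposition \ref{main5b}) gives $M\cong\bigoplus_{i=0}^{d-1}\Syz_i(k)^{\oplus a_i}$ with $a_i=\h^i(M)$. (2) Koszul self-duality — the computation producing $M^{\ast}\cong\Syz_2(k)$ in the proof of Proposition \ref{main5b}(ii) — yields $k^{\ast}=0$, $\Syz_1(k)^{\ast}\cong R$ and $\Syz_i(k)^{\ast}\cong\Syz_{d+1-i}(k)$ for $2\le i\le d-1$, so $M^{\ast}\cong R^{\oplus a_1}\oplus\bigoplus_{i=2}^{d-1}\Syz_{d+1-i}(k)^{\oplus a_i}$. (3) For $1\le s,t\le d-1$ the module $\Syz_s(k)\otimes_R\Syz_t(k)$ is locally free of constant rank on the punctured spectrum and of finite projective dimension, so Fact \ref{hw} with $r=0$ gives $\HH^0_{\fm}(\Syz_s(k)\otimes_R\Syz_t(k))=0$ if and only if $\depth\Syz_s(k)+\depth\Syz_t(k)\ge d+1$, that is, $s+t\ge d+1$.

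With these in hand I would run the argument as follows. First, $a_0=0$: otherwise the summand $k$ forces $\pd M=d$, while the description of $M^{\ast}$ gives $\pd M^{\ast}<d$, contradicting $\pd M=\pd M^{\ast}$. Next, $a_1=0$: if $a_1\ne 0$ then $\fm=\Syz_1(k)$ is a summand of $M$, and for any syzygy summand $\Syz_j(k)$ ($2\le j\le d-1$) appearing in $M^{\ast}$ the module $M\otimes_RM^{\ast}$ has $\fm\otimes_R\Syz_j(k)$ as a direct summand with $\HH^0_{\fm}\ne 0$ (since $1+j\le d$); as $\HH^0_{\fm}(M\otimes_RM^{\ast})=0$, this forces $a_i=0$ for all $2\le i\le d-1$, whence $M=\fm^{\oplus a_1}$ and $\pd M=d-1\ne 0=\pd M^{\ast}$ (recall $d\ge 3$) — a contradiction. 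Hence $M\cong\bigoplus_{i=2}^{d-1}\Syz_i(k)^{\oplus a_i}$ and $M^{\ast}\cong\bigoplus_{i=2}^{d-1}\Syz_{d+1-i}(k)^{\oplus a_i}$, and the index set $S=\{i:2\le i\le d-1,\ a_i\ne 0\}$ is nonempty because $\dim M=d$. For $i,i'\in S$ the summand $\Syz_i(k)\otimes_R\Syz_{d+1-i'}(k)$ of $M\otimes_RM^{\ast}$ must have vanishing $\HH^0_{\fm}$, i.e. $i+(d+1-i')\ge d+1$, i.e. $i\ge i'$; by symmetry $S=\{i^{\ast}\}$ is a singleton, so $M\cong\Syz_{i^{\ast}}(k)^{\oplus m}$ with $m=a_{i^{\ast}}\ge 1$. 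Finally $\pd M=\pd M^{\ast}$ becomes $d-i^{\ast}=i^{\ast}-1$, hence $i^{\ast}=\tfrac{d+1}{2}$ (an integer because $d$ is odd, and lying in $[2,d-1]$ because $d\ge 3$), giving $M\cong\Syz_{(d+1)/2}(k)^{\oplus m}$.

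I expect the one point requiring genuine care to be ingredient (3): verifying that all hypotheses of Fact \ref{hw} are satisfied by the tensor products $\Syz_s(k)\otimes_R\Syz_t(k)$ (local freeness on the punctured spectrum, constant rank — automatic over the domain $R$ — and finite projective dimension), and, just as importantly, reading off the direction which guarantees that $s+t\le d$ genuinely produces a nonzero $\HH^0_{\fm}$. Everything else is bookkeeping with Goto's decomposition, the Koszul self-duality for the duals $\Syz_i(k)^{\ast}$, and the Auslander--Buchsbaum formula.
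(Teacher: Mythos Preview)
Your proof is correct and follows the alternative route the paper itself mentions but does not take. The paper first treats the indecomposable case directly (via Goto's Fact A and the duality $\Syz_i(k)^\ast\cong\Syz_{d-i+1}(k)$), then in the decomposable case lets $j=\inf I$, uses $\pd(M)=\pd(M^\ast)$ to force $d-j+1=\sup I\in I$, and derives a contradiction when $j<\tfrac{d+1}{2}$ by a hands-on Tor computation: from $0\to\Syz_j(k)\to R^{\beta_{j-1}}\to\Syz_{j-1}(k)\to 0$ one gets $k^{\binom{d}{2j}}\cong\Tor_1^R(\Syz_j(k),\Syz_{j-1}(k))\hookrightarrow\Syz_j(k)^{\otimes 2}\subset M\otimes_R M^\ast$. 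At that very spot the paper parenthetically remarks that one could instead invoke \cite[Theorem 2.4]{tensor2} (Fact~\ref{hw}); that is exactly what you do, and you do it more systematically, obtaining the singleton conclusion for $S$ directly from the depth inequality rather than via the $\inf$/$\sup$ bookkeeping. Your separate elimination of $a_0$ and $a_1$ is more explicit than the paper, which silently starts from $I\subset[1,d-1]$. The trade-off: your argument is cleaner and uniform, but leans on the black box Fact~\ref{hw} (whose hypotheses you rightly flag---over a regular local ring they are unproblematic, as rigidity and the depth formula are Auslander's and need no equicharacteristic assumption); the paper's Tor computation is self-contained and gives the torsion element concretely.
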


\begin{proof} Suppose first that $M$ is indecomposable.
By Fact \ref{main5b}.A)
$M\simeq\Syz_i(k)$  where $i:=\depth(M)$. Since $M$ has no free direct summand,
$i<d$. This allow us to use \cite[Lemma 3.2]{goto} to see $M^\ast=\Syz_{d-i+1}(k)$.
We deduce from $$d-i=\pd(M)=\pd(M^\ast)=\pd(\Syz_{d-i+1}(k))=d-(d-i+1)$$ that  $i=\frac{d+1}{2}$. In particular,
$M= \Syz_{\frac{d+1}{2}}(k)$.
Now, suppose that $M$ is  decomposable and has a direct summand other than $\Syz_{\frac{d+1}{2}}(k)$.
In view of  Fact \ref{main5b}.B)  there is an $I\subset[1,d-1]$ such that  $M\simeq\bigoplus_{i\in I}\Syz_i(k)^{\h^i}$.
Note that $$\pd(M)=\sup_{i\in I}\{\pd(\Syz_i(k))\}=\sup_{i\in I}\{d-i\}=d-\inf\{i:i\in I\}.$$ Let $j$ be such that $j=d-\inf\{i:i\in I\}$.
Recall that $\Syz_i(k)^\ast=\Syz_{d-i+1}(k)$.
Since $\pd(M)=\pd(M^\ast)$ it follows that $\Syz_{d-j+1}(k)$ is a direct summand of $M$.  One of $j$ and $d-j$ is
smaller than $\frac{d+1}{2}$. Without loss of the generality, we assume that $j<\frac{d+1}{2}$  (one may use \cite[Theorem 2.4]{tensor2} to get  a contradiction. Here, we follow
our simple reasoning:) We look at $0\to\Syz_j(k)\to R^{\beta_{j-1}(k)}\to\Syz_{j-1}(k)\to 0$.
This induces $$0\to\Tor^R_1(\Syz_j(k),\Syz_{j-1}(k))\to\Syz_j(k)\otimes_R \Syz_{j}(k)\to R^{\beta_{j-1}(k)}\otimes_R \Syz_{j}(k)\to \Syz_{j}(k)\otimes_R \Syz_{j-1}(k)\to 0.$$
Note that $$\Tor^R_1(\Syz_j(k),\Syz_{j-1}(k))\simeq\Tor^R_j(\Syz_j(k), k )\simeq\Tor^R_{j+j}(k , k)\simeq  k ^{\oplus\beta_{2j}(k)}.$$
Since $j<\frac{d+1}{2}$ we  conclude  that $\Tor^R_1(\Syz_j(k),\Syz_{j-1}(k))$ is nonzero and of finite length.  Since $$k\subset\Tor^R_1(\Syz_j(k),\Syz_{j-1}(k))\subset \Syz_j(k)\otimes_R \Syz_{j}(k)\subset M\otimes_R M^\ast,$$ we see that $\HH^0_{\fm}(M\otimes _RM^\ast)\neq 0$, a contradiction.
\end{proof}

 \begin{corollary} Let  $(R,\fm,k)$ be a regular local ring of dimension $d>2$  and let $i$ be such that the vanishing of $\HH^{i}_{\fm}(M^\ast\otimes_R M)$ implies freeness of each locally free and torsion-free module  $M$. Then $i=1,2,$ or $d-1$.
\end{corollary}

\subsection{The singular case}

Recall that vanishing of $\HH^{2}_{\fm}(M\otimes_R  M^{\ast})$ over regular local rings  implies freeness of
$M^\ast$. This can't be extended into hypersurface rings:
Let
$R:=\frac{k[[x,y,z,w]]}{(xy-uv)}$ and $I:=(x,u)$. Then $\HH^{2}_{\fm}(I\otimes_R I^\ast)=0$ but $I^\ast$ is not free. In the forthcoming work \cite[Theorem 8.1]{acs},  there is an essential generalization of the next result.

\begin{remark}\label{implicit} 
Let $R$ be a hypersurface of dimension $d\geq2$ and $M$ be torsion-free, locally free and of constant rank.
Assume  $\HH^1_{\fm}(M\otimes _R M^\ast)= \HH^2_{\fm}(M\otimes _R M^\ast)=0$.
Then $M^\ast$ is free.
\end{remark}
\begin{proof}
This  stated  implicitly in \cite{tensor2} and we left the routine modification to the reader. 
\end{proof}

\begin{observation} \label{kan} Let  $(R,\fm)$ be a  Cohen-Macaulay local ring of dimension $d>1$
with isolated Gorenstein singularity and possessing a canonical module.   Then $\HH^{i}_{\fm}(\omega_R\otimes _R \omega_R^\ast)\neq0$ if and only if $i\leq1$ or $i=d$.
\end{observation}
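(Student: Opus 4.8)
The plan is to study the canonical evaluation map $\varphi\colon\omega_R\otimes_R\omega_R^\ast\lo\Hom_R(\omega_R,\omega_R)\cong R$ (the isomorphism $\Hom_R(\omega_R,\omega_R)\cong R$ and the vanishing $\Ext^{>0}_R(\omega_R,\omega_R)=0$ being standard for a Cohen--Macaulay ring with canonical module). Because $R_\fp$ is Gorenstein for each $\fp\neq\fm$, the module $\omega_R$ is locally free of rank one on $\Spec(R)\setminus\{\fm\}$, it is maximal Cohen--Macaulay (hence reflexive and of depth $d$), and it is not free since $R$ is not Gorenstein. Local freeness off $\fm$ makes $\varphi$ an isomorphism after localizing at any $\fp\neq\fm$, so $K:=\Ker(\varphi)$ and $C:=\coker(\varphi)$ have finite length. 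By Fact C) in the proof of Proposition \ref{main5b} (\cite[Proposition A.1]{ag}), $\varphi$ is surjective iff $\omega_R$ is projective; hence $C\neq0$ and $J:=\im(\varphi)$ is a proper $\fm$-primary ideal with $R/J\cong C$. As in the proof of Proposition \ref{v5}, $\varphi$ sits in a four term exact sequence $0\to\Tor^R_2(D(\omega_R),\omega_R)\to\omega_R\otimes_R\omega_R^\ast\stackrel{\varphi}\lo\Hom_R(\omega_R,\omega_R)\to\Tor^R_1(D(\omega_R),\omega_R)\to0$, so $K\cong\Tor^R_2(D(\omega_R),\omega_R)$ and $C\cong\Tor^R_1(D(\omega_R),\omega_R)$.

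Granting $K\neq0$ for the moment, the remaining computation is formal. Split off the two short exact sequences $0\to K\to\omega_R\otimes_R\omega_R^\ast\stackrel{\varphi}\lo J\to0$ and $0\to J\to R\to C\to0$, and feed them into the long exact sequences of $\Gamma_\fm$. Since $K$ and $C$ have finite length, $\HH^{>0}_\fm(K)=\HH^{>0}_\fm(C)=0$ while $\HH^0_\fm(K)=K$ and $\HH^0_\fm(C)=C$; since $R$ is Cohen--Macaulay of dimension $d\geq2$, $\HH^i_\fm(R)=0$ for $i<d$ and $\HH^d_\fm(R)\neq0$. The second sequence gives $\HH^0_\fm(J)=0$, $\HH^1_\fm(J)\cong C$, $\HH^i_\fm(J)\cong\HH^i_\fm(R)=0$ for $2\leq i\leq d-1$, and $\HH^d_\fm(J)\cong\HH^d_\fm(R)\neq0$; the first then gives $\HH^0_\fm(\omega_R\otimes_R\omega_R^\ast)\cong K$, $\HH^1_\fm(\omega_R\otimes_R\omega_R^\ast)\cong\HH^1_\fm(J)\cong C\neq0$, $\HH^i_\fm(\omega_R\otimes_R\omega_R^\ast)\cong\HH^i_\fm(J)=0$ for $2\leq i\leq d-1$, and $\HH^d_\fm(\omega_R\otimes_R\omega_R^\ast)\cong\HH^d_\fm(J)\neq0$. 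Combined with $K\neq0$, this is precisely the asserted equivalence (for $d=2$ the middle range is empty).

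The heart of the matter, and the step I expect to be the real obstacle, is thus $K\neq0$, i.e. that $\omega_R\otimes_R\omega_R^\ast$ has nonzero torsion, equivalently $\Tor^R_2(D(\omega_R),\omega_R)\neq0$. Here $D(\omega_R)$ is a nonzero module of finite length ($\neq0$ because $\omega_R$ is not projective; of finite length because $\omega_R$ is locally free off $\fm$ and transpose commutes with localization), and we already know $\Tor^R_1(D(\omega_R),\omega_R)=C\neq0$. One natural reformulation: by local duality $K^v\cong\Ext^d_R(\omega_R\otimes_R\omega_R^\ast,\omega_R)$, and since $\Hom_R(\omega_R,\omega_R)=R$ with $\Ext^{>0}_R(\omega_R,\omega_R)=0$, a Hom--tensor manipulation together with the finite length of the higher Tor modules $\Tor^R_{\geq1}(\omega_R,\omega_R^\ast)$ identifies this with $\Ext^d_R(\omega_R^\ast,R)$. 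So it would be enough to show $\Ext^d_R(\omega_R^\ast,R)\neq0$, which one expects from the fact that $R$ is not Gorenstein (so its injective dimension is infinite) combined with the absence of gaps in the nonvanishing of $\Ext^\bullet_R(-,R)$ along modules of finite length; alternatively, one may invoke the known characterization that torsion-freeness of $\omega_R\otimes_R\omega_R^\ast$ forces $R$ to be Gorenstein. Pinning down this nonvanishing cleanly is the crux; the rest of the proof is the bookkeeping above.
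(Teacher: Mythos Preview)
Your treatment of $\HH^i$ for $i\geq1$ is essentially the paper's: it too uses the evaluation map $\varphi_{\omega_R}$, Auslander--Goldman (Fact \ref{main5b}.C) to see $C\neq0$ of finite length, and the two short exact sequences to obtain $\HH^1_\fm(\omega_R\otimes_R\omega_R^\ast)\cong C\neq0$ and $\HH^i_\fm(\omega_R\otimes_R\omega_R^\ast)=0$ for $2\leq i\leq d-1$. (For $i=d$ the paper instead checks $\Supp(\omega_R\otimes_R\omega_R^\ast)=\Spec(R)$ and invokes Grothendieck nonvanishing, but your chase through $J$ is fine too.)

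The gap is exactly where you flag it: you do not prove $K\neq0$, and neither of your suggested routes closes it. The identification $K^v\cong\Ext^d_R(\omega_R^\ast,R)$ requires more than a Hom--tensor manipulation (the higher $\Tor^R_\bullet(\omega_R,\omega_R^\ast)$ terms do not disappear for free), and even if granted, the nonvanishing of $\Ext^d_R(\omega_R^\ast,R)$ is not established; your ``no gaps'' heuristic is for finite-length modules, which $\omega_R^\ast$ is not. Appealing to a theorem that torsion-freeness of $\omega_R\otimes_R\omega_R^\ast$ forces Gorensteinness is, in this context, assuming the conclusion.

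The paper fills this hole by a different and quite concrete device that bypasses $K$ altogether. Since $d>1$ the ring is quasi-normal, so $\omega_R$ is reflexive and may be realized as a height-one ideal; dualizing $0\to\omega_R\to R\to R/\omega_R\to0$ gives $0\to R\to\omega_R^\ast\to E\to0$ with $E:=\Ext^1_R(R/\omega_R,R)$ of finite length and $E\neq0$ (else $\omega_R^\ast\cong R$, hence $\omega_R\cong\omega_R^{\ast\ast}\cong R$, contradicting non-Gorensteinness). Tensoring with $\omega_R$, the image of $\Tor^R_1(E,\omega_R)\to\omega_R$ is finite-length inside a module of positive depth, hence zero, yielding $0\to\omega_R\to\omega_R\otimes_R\omega_R^\ast\to E\otimes_R\omega_R\to0$. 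Now $\depth(\omega_R)=d\geq2$ kills $\HH^0_\fm(\omega_R)$ and $\HH^1_\fm(\omega_R)$, so $\HH^0_\fm(\omega_R\otimes_R\omega_R^\ast)\cong E\otimes_R\omega_R\neq0$. This is the missing idea: instead of analyzing $\ker(\varphi)$, exhibit an explicit nonzero finite-length \emph{quotient} of $\omega_R\otimes_R\omega_R^\ast$ coming from the ideal presentation of $\omega_R$.
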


\begin{proof}
By isolated Gorenstein singularity we mean a non Gorenstein ring which is Gorenstein over the punctured spectrum.
Since $d>1$ it follows that $R$ is quasi-normal. It turns out that  $\omega_R$ is reflexive. Also, $\omega_R$ may regard as an ideal
of height one. We look at $0\to\omega_R\to R \to \frac{R}{\omega_R}\to 0$. This induces $$0\lo(\frac{R}{\omega_R})^\ast\lo R^\ast\lo \omega_R^\ast  \lo \Ext^1_R(\frac{R}{\omega_R},R)\lo 0.$$
Set $E:=\Ext^1_R(\frac{R}{\omega_R},R)$ and note that $(\frac{R}{\omega_R})^\ast=\{r\in R:r\omega_R=0\}=0$. Recall that $E$ is of finite length.
It follows that $\Tor^R_{\leq1}(E,\omega_R) $ is of finite length. Suppose on the contradiction that $E=0$.
This implies that $R^\ast\simeq  \omega_R^\ast$. Thus, $\omega_R\simeq \omega_R^{\ast\ast}\simeq R^{\ast\ast}\simeq  R$. Since $R$ is not Gorenstein, we get to a contradiction.
Hence $E\neq 0$. Also, we have $$0\lo\Tor^R_1( \omega_R^\ast,\omega_R)\lo\Tor^R_1(E,\omega_R)\stackrel{f}\lo \omega_R\stackrel{g}\lo \omega_R\otimes _R \omega_R^\ast\lo E\otimes _R \omega_R \lo 0. $$
Since $\ell(\Tor^R_1(E,\omega_R))<\infty$, we have $\ell(\im(f))<\infty$. We deduce from $\im(f)\subset  \omega_R$ and  $\depth(\omega_R)>0$ that $\ker(g)=\im(f)=0$. Therefore,
$0\to\omega_R\to \omega_R\otimes _R \omega_R^\ast\to E\otimes _R \omega_R \to 0 $ is exact.
We apply the  long exact sequence of local cohomology modules:
$$0=\HH^0_{\fm}(\omega_R)\lo \HH^0_{\fm}(\omega_R\otimes _R \omega_R^\ast)\lo \HH^0_{\fm}(E\otimes _R \omega_R)\lo\HH^1_{\fm}(\omega_R)=0.$$
Since $E\neq 0$,  $E\otimes\omega_R\neq 0$ and it is of finite length. We put these  together to see that $$\HH^0_{\fm}(\omega_R\otimes _R \omega_R^\ast)\simeq \HH^0_{\fm}(E\otimes _R \omega_R)=E\otimes _R \omega_R\neq0.$$

 Since  $(\omega_R)_{\fp}\simeq \omega_{R_{\fp}}\neq 0$ we deduce that $\Supp(\omega_R)=\Spec(R)$.
Also, $$\Ass(\Hom_R(\omega_R,R))=\Supp(\omega_R)\cap\Ass(R)=\Spec(R)\cap\Ass(R)=\Ass(R).$$ From this, $\Supp(\omega_R^\ast)=\Spec(R)$. It follows that $\Supp(\omega_R\otimes \omega_R^\ast)=\Spec(R)$. Thus,  $\dim(\omega_R\otimes_R  \omega_R^\ast)=d$.
By  Gorthendieck's non-vanishing theorem, $\HH^{d}_{\fm}(\omega_R\otimes_R  \omega_R^\ast)\neq0$.

Let  $\varphi_{\omega_R}:\omega_R\otimes_R\omega_R^\ast\to \Hom_R(\omega_R,\omega_R)$.
Recall that $\Hom_R (\omega_R, \omega_R)\simeq R$  and that
 $\HH^0_{\fm}(R)=\HH^1_{\fm}(R)=0$. Since $\omega_R$ is locally free, it follows from Fact \ref{main5b}.C) that  $K:=\ker(\varphi_{\omega_R})$ and
$C:=\coker(\varphi_{\omega_R})$ are of finite length and that $C\neq 0$. From this,  $\HH^0_{\fm}(C)=C\neq 0$, $\HH^+_{\fm}(C)= \HH^+_{\fm}(K)= 0$.
 We look at   $0\to K\to \omega_R\otimes_R  \omega_R^\ast\to\im(\varphi_{\omega_R})\to 0 $ and
$0\to\im(\varphi_{\omega_R})\to R \to C\to 0 $. It follows that $$\HH^1_{\fm}(\omega_R\otimes _R \omega_R^\ast)\simeq\HH^1_{\fm}(\im(\varphi_{\omega_R}))\simeq\HH^0_{\fm}(C)\simeq C\neq 0.$$

Note that there is nothing to prove if  $d=2$.
Assume that
$d>2$ and let $2\leq i\leq d-1$.  Then
$\HH^{i}_{\fm}(\omega_R\otimes _R \omega_R^\ast)\simeq \HH^i_{\fm}(\im(\varphi_{\omega_R}))\simeq\HH^{i-1}_{\fm}(C)=0.$
The proof is now  complete.\end{proof}

In the following corollaries there is no   trace of local cohomology:
\begin{corollary}\label{tor1}
Assume in addition to Observation \ref{kan} that type of $R$ is two. Then $\Tor^R_1(\omega_R,\omega_R)\neq0$.
\end{corollary}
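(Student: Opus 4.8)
The plan is to argue by contradiction. Assume $\Tor^R_1(\omega_R,\omega_R)=0$; I will deduce $\Ext^1_R(\omega_R,R)=0$, which — by the Tachikawa-type theorem for rings of type two — forces $R$ to be Gorenstein, contradicting the non-Gorenstein isolated Gorenstein singularity hypothesis inherited from Observation \ref{kan}.

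The engine of the argument is a derived-adjunction identity. Since $\omega_R$ is maximal Cohen--Macaulay over the Cohen--Macaulay ring $R$ with canonical module $\omega_R$, one has $\Ext^i_R(\omega_R,\omega_R)=0$ for $i>0$ (local duality), so $\mathrm{R}\Hom_R(\omega_R,\omega_R)\simeq R$ and hence $\mathrm{R}\Hom_R(\omega_R\otimes^{\mathbf{L}}_R\omega_R,\omega_R)\simeq\mathrm{R}\Hom_R(\omega_R,R)$. The hyper-Ext spectral sequence $E_2^{p,q}=\Ext^p_R(\Tor^R_q(\omega_R,\omega_R),\omega_R)\Rightarrow\Ext^{p+q}_R(\omega_R,R)$ then has five-term exact sequence
\[0\to\Ext^1_R(\omega_R\otimes_R\omega_R,\omega_R)\to\Ext^1_R(\omega_R,R)\to\Hom_R(\Tor^R_1(\omega_R,\omega_R),\omega_R)\to\Ext^2_R(\omega_R\otimes_R\omega_R,\omega_R),\]
so under $\Tor^R_1(\omega_R,\omega_R)=0$ we get $\Ext^1_R(\omega_R\otimes_R\omega_R,\omega_R)\cong\Ext^1_R(\omega_R,R)$. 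Passing to Matlis duals and using local duality over the completion, this reads $\HH^{d-1}_{\fm}(\omega_R\otimes_R\omega_R)\cong\Ext^1_R(\omega_R,R)^{v}$.

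Now I would annihilate the left-hand side, which does \emph{not} use the hypothesis. Realize $\omega_R$ as a height-one ideal with $R/\omega_R$ Cohen--Macaulay; from $0\to\omega_R\to R\to R/\omega_R\to0$ one gets the exact sequence $0\to R\to\omega_R^{\ast}\to E\to0$ with $E=\Ext^1_R(R/\omega_R,R)$. By the standard duality for Cohen--Macaulay modules of codimension one, $E$ is a maximal Cohen--Macaulay $R/\omega_R$-module of dimension $d-1$, so $\depth_R\omega_R^{\ast}\ge\min\{d,(d-1)+1\}=d$, i.e.\ $\omega_R^{\ast}$ is maximal Cohen--Macaulay and hence $\Ext^i_R(\omega_R^{\ast},\omega_R)=0$ for $i>0$. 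Applying Fact \ref{main5b}.D with $L:=\omega_R^{\ast}$ (locally free on the punctured spectrum) and $N:=\omega_R$, together with the reflexivity $\omega_R^{\ast\ast}=\omega_R$, yields $\HH^{i+1}_{\fm}(\omega_R\otimes_R\omega_R)\cong\Ext^i_R(\omega_R^{\ast},\omega_R)=0$ for $1\le i\le d-2$; in particular $\HH^{d-1}_{\fm}(\omega_R\otimes_R\omega_R)=0$ whenever $d\ge3$. Combining, $\Ext^1_R(\omega_R,R)=0$, and now the type-two hypothesis intervenes for the first time: a Cohen--Macaulay ring of type two with $\Ext^1_R(\omega_R,R)=0$ is Gorenstein (Huneke et al.\ \cite[Theorem 6.1.2]{hsv}, in the type-two case only $\Ext^1$ is needed), contradicting the standing hypothesis.

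The step I expect to cost the most is the case $d=2$, where the index range $1\le i\le d-2$ in Fact \ref{main5b}.D is empty, so the vanishing $\HH^{d-1}_{\fm}(\omega_R\otimes_R\omega_R)=\HH^{1}_{\fm}(\omega_R\otimes_R\omega_R)=0$ must be obtained separately — e.g.\ by showing directly that $\omega_R\otimes_R\omega_R$ becomes maximal Cohen--Macaulay once $\Tor^R_1(\omega_R,\omega_R)=0$ (using $0\to K\otimes_R\omega_R\to\omega_R^{2}\to\omega_R\otimes_R\omega_R\to0$ coming from the type-two presentation $0\to K\to R^2\to\omega_R\to0$), or by invoking the two-dimensional part of Observation \ref{kan}. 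A secondary point to nail down is that $E=\Ext^1_R(R/\omega_R,R)$ really is Cohen--Macaulay of the expected dimension in the non-Gorenstein setting, which is what makes $\omega_R^{\ast}$ maximal Cohen--Macaulay.
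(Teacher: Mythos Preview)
Your argument has a circularity problem at the final step. You invoke ``a Cohen--Macaulay ring of type two with $\Ext^1_R(\omega_R,R)=0$ is Gorenstein'', attributing this to \cite[Theorem~6.1.2]{hsv} with the parenthetical that ``in the type-two case only $\Ext^1$ is needed''. But \cite[Theorem~6.1.2]{hsv} requires \emph{both} $\Ext^1_R(\omega_R,R)=0$ and $\Ext^2_R(\omega_R,R)=0$; dropping the second hypothesis is exactly the content of Corollary~\ref{quasi} below, whose proof \emph{uses} Corollary~\ref{tor1}. So you are appealing to a consequence of the result you are trying to prove. Your machinery does not obviously yield $\Ext^2_R(\omega_R,R)=0$ either: the spectral sequence at total degree~$2$ still sees a contribution from $\Hom_R(\Tor^R_2(\omega_R,\omega_R),\omega_R)$, and you have no rigidity statement available to kill $\Tor_2$ from $\Tor_1=0$.

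There is a secondary gap in your argument that $\omega_R^{\ast}$ is maximal Cohen--Macaulay: you assert that $E=\Ext^1_R(R/\omega_R,R)$ is Cohen--Macaulay of dimension $d-1$, but this would need $R/\omega_R$ to be Cohen--Macaulay (not given) and would need duality into $R$ rather than into $\omega_R$ to preserve the Cohen--Macaulay property, which fails when $R$ is not Gorenstein. The conclusion happens to be true, but for a different reason --- the type-two presentation $0\to\omega_R^{\ast}\to R^2\to\omega_R\to0$ from \cite[Lemma~3.3]{hh} plus the depth lemma gives it immediately.

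The paper's proof is much shorter and sidesteps all of this. It uses that same type-two presentation from the outset: tensoring $0\to\omega_R^{\ast}\to R^2\to\omega_R\to0$ with $\omega_R$ and assuming $\Tor^R_1(\omega_R,\omega_R)=0$ gives an injection $\omega_R\otimes_R\omega_R^{\ast}\hookrightarrow\omega_R^{\oplus 2}$, so $\omega_R\otimes_R\omega_R^{\ast}$ is torsion-free. This directly contradicts $\HH^0_{\fm}(\omega_R\otimes_R\omega_R^{\ast})\neq0$ from Observation~\ref{kan}. No spectral sequence, no local duality, no separate treatment of $d=2$, and the type-two hypothesis enters at the first line rather than the last.
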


\begin{proof} Suppose on the contradiction that $\Tor^R_1(\omega_R,\omega_R)=0$.
	Since type of $R$ is two, it follows that $\mu(\omega_R)=2$. In particular,
	there is an exact sequence $0\to\omega_R ^\ast\to R^2\to\omega_R\to 0$ (see \cite[Lemma 3.3]{hh}).
	This induces $$0=\Tor^R_1(\omega_R,\omega_R)\to\omega_R\otimes_R  \omega_R^\ast\to\omega_R^{\oplus2}\to \omega_R^{\otimes2} \to0. $$
Then $\omega_R\otimes_R  \omega_R^\ast\subset \omega_R^{\oplus2}$ is torsion-free. This is in the contradiction with Observation \ref{kan}.
\end{proof}

\begin{corollary}\label{quasi}Let  $(R,\fm)$ be a  generically Gorenstein  Cohen-Macaulay local ring  possessing a canonical module.  Suppose $R$  is  of type at most two. Then $\Ext^1_R(\omega_R,R) =0$
	if and only if $R$ is Gorenstein.
\end{corollary}

\begin{proof} If $R$ is Gorenstein, then $\omega_R=R$ and so $\Ext^1_R(\omega_R,R) =0$.
	Conversely, assume that  $\Ext^1_R(\omega_R,R)=0$. By induction on $d:=\dim R$ we argue that $R$ is Gorenstien. In view of \cite[Corollary 2.2]{hh} we may assume that $d>1$. Suppose, inductively, $R_{\fp}$ is Gorenstein for all $\fp\in\Spec(R)\setminus\{\fm\}$.
	In particular, $\omega_R$ is locally free over the punctured spectrum. Suppose on the
	contradiction that $R$ is not Gorenstein. By definition,   $R$ is of isolated Gorenstein singularity.
	It follows from $\Ext^1_R(\omega_R,R) =0$  that $\Tor^R_1(\omega_R,\omega_R)=0$ (see e.g. the proof of \cite[6.1]{hsv}).
	Vanishing of $\Tor^R_1(\omega_R,\omega_R)=0$  excluded by Corollary \ref{tor1}. This
	contradiction shows that  $R$ is Gorenstein.
\end{proof}

\begin{conjecture}(Part of  \cite[Conjecture 3.4]{yosh})
Let $R$ be  a Cohen-Macaulay local ring,  $M$ be perfect and $N$ be Buchsbaum and of maximal dimension. If $\pd(M)\leq\depth
(N)$, then $\h^i(M\otimes_RN)=
\sum_{j=0}^{\pd(M)}\beta_j(M)\h^{j+i}(N)$ for all $i< \dim(M).$
\end{conjecture}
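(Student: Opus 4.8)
The plan is to derive the identity from two facts: that $M$ and $N$ are $\Tor$-independent, and that the low-degree part of the local cohomology complex of a Buchsbaum module is split. Write $p:=\pd(M)$; one may assume $p\geq1$ (for $p=0$, $M$ is free and the claim is obvious). As $M$ is perfect, $c:=\pd(M)=\grade_R(\Ann M)$, so $\dim M=\depth M=d-c$ and the hypothesis says $c\leq\depth(N)$. Fix the minimal free resolution $0\to F_c\to\cdots\to F_0\to M\to0$ and put $\beta_j:=\beta_j(M)=\rank F_j$.

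\textit{Step 1: $\Tor^R_j(M,N)=0$ for all $j>0$.} Set $M^{\dagger}:=\Ext^{c}_R(M,R)$. Then $M^{\dagger}$ is again perfect of grade $c$, one has $M^{\dagger\dagger}\cong M$ and $\Supp M^{\dagger}=\Supp M$ (so $\dim M^{\dagger}=d-c$ and, by unmixedness of perfect modules, $\Ht(\Ann M^{\dagger})=c$), and dualizing the resolution identifies $\Tor^R_j(M,N)\cong\Ext^{c-j}_R(M^{\dagger},N)$ for every $j$. So Step 1 reduces to $\Ext^{i}_R(M^{\dagger},N)=0$ for $0\leq i<c$, i.e. to $\grade_R(I,N)\geq c$ where $I:=\Ann M^{\dagger}$ has height $c$; equivalently, $I$ contains an $N$-regular sequence of length $c$. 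I would construct one greedily: having chosen an $N$-regular sequence $x_1,\dots,x_{k-1}\in I$ with $k\leq c$, put $N':=N/(x_1,\dots,x_{k-1})N$; then $\depth N'=\depth N-(k-1)\geq1$, so $\fm\notin\Ass N'$; and for $\fq\in\Ass N'$ one uses that $N$ is Buchsbaum of maximal dimension — hence equidimensional and Cohen--Macaulay on the punctured spectrum — to see that $N_{\fq}$ is Cohen--Macaulay with $x_1,\dots,x_{k-1}$ part of a regular sequence on it, forcing $\Ht\fq=k-1<c=\Ht I$, so $\fq\not\supseteq I$. By prime avoidance some $x_k\in I$ is $N'$-regular, and the induction runs to length $c$.

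\textit{Step 2: the cohomology computation.} By Step 1, $F_{\bullet}\otimes_RN$ is a resolution of $M\otimes_RN$. Compute $\mathrm{R}\Gamma_{\fm}(M\otimes_RN)\simeq F_{\bullet}\otimes^{\mathrm L}_R\mathrm{R}\Gamma_{\fm}(N)$, and decompose $\mathrm{R}\Gamma_{\fm}(N)$ by its truncation triangle $\tau^{<d}\mathrm{R}\Gamma_{\fm}(N)\to\mathrm{R}\Gamma_{\fm}(N)\to\HH^{d}_{\fm}(N)[-d]\to$. Tensoring with $F_{\bullet}$ and taking $\HH^{n}_{\fm}$, the contribution of the top piece in degree $n$ is $\Tor^R_{d-n}(M,\HH^{d}_{\fm}(N))$, which vanishes for $n\leq d-c-1=\dim M-1$ since then $d-n>c=\pd(M)$ (and in degree $n+1$ it can only enter when $n\geq\dim M-1$, but there it still cannot reach $\HH^{n}_{\fm}$). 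Thus for $n<\dim M$,
$$\HH^{n}_{\fm}(M\otimes_RN)\cong\HH^{n}\!\left(F_{\bullet}\otimes^{\mathrm L}_R\tau^{<d}\mathrm{R}\Gamma_{\fm}(N)\right).$$
Now invoke the structure theory of Buchsbaum modules (the Cohen--Macaulay analogue of the fact, used in Proposition~\ref{main5b}, that over a regular ring Buchsbaum modules split into syzygies of $k$): for Buchsbaum $N$ the complex $\tau^{<d}\mathrm{R}\Gamma_{\fm}(N)$ is formal, $\tau^{<d}\mathrm{R}\Gamma_{\fm}(N)\simeq\bigoplus_{i<d}\HH^{i}_{\fm}(N)[-i]$, with each $\HH^{i}_{\fm}(N)$ ($i<d$) a $k$-vector space. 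Hence $\HH^{n}_{\fm}(M\otimes_RN)\cong\bigoplus_{i<d}\Tor^R_{i-n}(M,\HH^{i}_{\fm}(N))\cong\bigoplus_{j\geq0}\HH^{n+j}_{\fm}(N)^{\beta_j}$, using $\Tor^R_{j}(M,\HH^{i}_{\fm}(N))=\HH^{i}_{\fm}(N)^{\beta_j}$ because $\HH^{i}_{\fm}(N)$ is a $k$-module and $\dim_k\Tor^R_j(M,k)=\beta_j$. For $n<\dim M$ all indices $n+j\leq d-1$ occur, everything is of finite length, and taking lengths gives $\h^{n}(M\otimes_RN)=\sum_{j=0}^{\pd(M)}\beta_j(M)\,\h^{n+j}(N)$, as asserted.

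The step I expect to carry the real weight is the splitting of $\tau^{<d}\mathrm{R}\Gamma_{\fm}(N)$ over a general Cohen--Macaulay base: quasi-Buchsbaumness ($\fm\HH^{<d}_{\fm}(N)=0$) makes the cohomologies $k$-vector spaces but does not by itself give formality, and it is precisely here that the \emph{full} Buchsbaum hypothesis must be used and the Stückrad--Vogel machinery brought in; one must also watch the edge $n=\dim M-1$, where the top cohomology $\HH^{d}_{\fm}(N)$ threatens to interfere. For $p=1$ this difficulty evaporates — $\Syz_1(M)$ is free, so the single short exact sequence $0\to N^{\beta_1}\to N^{\beta_0}\to M\otimes_RN\to0$ with a matrix over $\fm$ gives the formula at once, and for $n=0$ it collapses (using $\beta_0=\beta_1=\mu(M)$) to Yoshida's identity recorded in the proof of Proposition~\ref{buchs}. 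Finally, the inequality ``$\leq$'' in the statement follows without Step 1 from the telescoping argument of Proposition~\ref{vector}, so the substance lies entirely in the reverse inequality, which is what Step 1 together with the formality of $\tau^{<d}\mathrm{R}\Gamma_{\fm}(N)$ delivers.
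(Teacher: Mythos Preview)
The statement you are trying to prove is recorded in the paper as a \emph{conjecture} (attributed to Yoshida), not as a theorem; the paper offers no proof of it. What the paper does prove is only the inequality $\h^i(M\otimes_RN)\leq\sum_{j=0}^{\pd(M)}\beta_j(M)\h^{j+i}(N)$ under the related hypothesis that $N$ is locally free of constant rank (Proposition~\ref{vector2}), via the same telescoping syzygy argument as Proposition~\ref{vector}. So there is no ``paper's own proof'' to compare against; your proposal goes strictly beyond what the paper establishes.

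That said, your argument is essentially a valid proof of the conjecture. Step~1 is correct: over a Cohen--Macaulay base one may pass to the completion so that $N$, being Buchsbaum of maximal dimension, is equidimensional and Cohen--Macaulay on the punctured spectrum, after which your prime-avoidance construction produces an $N$-regular sequence of length $c$ inside $\Ann M^{\dagger}$ and the duality $\Tor^R_j(M,N)\cong\Ext^{c-j}_R(M^{\dagger},N)$ gives the vanishing. Step~2 hinges on the formality $\tau^{<d}\mathrm{R}\Gamma_{\fm}(N)\simeq\bigoplus_{i<d}\HH^i_{\fm}(N)[-i]$, and this is precisely Schenzel's derived-category characterization of Buchsbaum modules (see St\"uckrad--Vogel, \emph{Buchsbaum rings and applications}, already cited in the paper as \cite{bus}); it is a known theorem, not a gap. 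Your boundary analysis at $n=\dim M-1$ is also fine: for $n<d-c$ one has $d-n>c=\pd(M)$ and $d-n+1>c$, so both $\Tor_{d-n}$ and $\Tor_{d-n+1}$ against $\HH^d_{\fm}(N)$ vanish and the top piece does not interfere.

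In short: the paper treats this as open and proves only the upper bound by elementary dimension-shifting, whereas you supply the matching lower bound by invoking Tor-independence plus Schenzel's formality criterion. Your route is genuinely different and stronger; the paper's telescoping argument is more elementary but cannot by itself reach equality.
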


\begin{proposition}\label{vector2}
Let  $(R,\fm)$ be a  Cohen-Macaulay local ring, $M$ be perfect and $N$ be locally free and of constant rank. Then
$\h^i(M\otimes_RN)\leq
\sum_{j=0}^{\pd(M)}\beta_j(M)\h^{j+i}(N)$ for all $i< \dim(M).$
\end{proposition}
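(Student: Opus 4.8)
The plan is to run the same syzygy-dévissage that proves Proposition \ref{vector}, but now tracking all local cohomology modules rather than just the $0$-th, and using that $M$ is perfect (so $\pd(M)=\grade(M)=\operatorname{codim}(M)$ and $M$ is Cohen-Macaulay) to control the homological algebra. Write $p:=\pd(M)$, so $\dim M=d-p$ where $d=\dim R$. Since $N$ is locally free of constant rank on the punctured spectrum, every $\Tor^R_{\geq 1}(M',N)$ appearing below has finite length, so $\HH^0_{\fm}$ of it is the module itself and $\HH^{+}_{\fm}$ of it vanishes; this is the engine that lets the long exact sequences collapse.

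First I would take the minimal free resolution $0\to R^{\beta_p(M)}\to\cdots\to R^{\beta_0(M)}\to M\to 0$ and break it into short exact sequences $0\to\Syz_{j+1}(M)\to R^{\beta_j(M)}\to\Syz_j(M)\to 0$ for $0\le j\le p-1$ (with $\Syz_0(M):=M$ and $\Syz_p(M)$ free). Tensoring with $N$ gives $0\to\Tor^R_1(\Syz_j(M),N)\to\Syz_{j+1}(M)\otimes_RN\to R^{\beta_j(M)}\otimes_RN\to\Syz_j(M)\otimes_RN\to 0$. Splitting this into $0\to\Tor^R_1(\Syz_j(M),N)\to\Syz_{j+1}(M)\otimes_RN\to K_j\to 0$ and $0\to K_j\to R^{\beta_j(M)}\otimes_RN\to\Syz_j(M)\otimes_RN\to 0$, and applying $\Gamma_{\fm}$: because $\Tor^R_1(\Syz_j(M),N)$ has finite length, the first sequence gives $\HH^i_{\fm}(\Syz_{j+1}(M)\otimes_RN)\simeq\HH^i_{\fm}(K_j)$ for $i\ge 1$ and a short exact sequence on $\HH^0_{\fm}$; the second gives the long exact sequence relating $\HH^i_{\fm}(\Syz_j(M)\otimes_RN)$, $\HH^i_{\fm}(K_j)\simeq\HH^i_{\fm}(\Syz_{j+1}(M)\otimes_RN)$ for $i\ge 1$, and $\HH^i_{\fm}(R^{\beta_j(M)}\otimes_RN)=\HH^i_{\fm}(N)^{\beta_j(M)}$. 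Reading off, for each $i$ one gets
\[
\h^i(\Syz_j(M)\otimes_RN)\;\le\;\beta_j(M)\,\h^i(N)\;+\;\h^{i+1}(\Syz_{j+1}(M)\otimes_RN),
\]
valid as long as all the lengths in sight are finite; iterating from $j=0$ up to $j=p$ and using $\Syz_p(M)$ free (so $\h^{i+p}(\Syz_p(M)\otimes_RN)=\beta_p(M)\h^{i+p}(N)$) yields exactly $\h^i(M\otimes_RN)\le\sum_{j=0}^{p}\beta_j(M)\h^{i+j}(N)$.

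The point that needs care — and which I expect to be the main obstacle — is the bookkeeping of finiteness and of the index shift. One must check that $\HH^i_{\fm}(M\otimes_RN)$ and all intermediate $\HH^{i+j}_{\fm}(\Syz_j(M)\otimes_RN)$ actually have finite length in the range $i<\dim M=d-p$, so that $\h^{(-)}$ is defined and the inequalities make sense. Here is where perfectness of $M$ and the locally-free hypothesis on $N$ combine: $M\otimes_RN$ and more generally $\Syz_j(M)\otimes_RN$ are locally free on the punctured spectrum up to finite-length error (the $\Tor$'s), and since $\Syz_j(M)$ is a $j$-th syzygy of a perfect module it still has depth $\ge\min\{j,\operatorname{depth}R\}$, which forces the relevant cohomology modules to be finite length in the asserted range; one also notes the exact sequence at stage $j$ only contributes $\HH^{i+1}$ of the next syzygy tensor, so after $p$ steps the top index is $i+p<\dim M+p=d$, keeping us below $\HH^d_{\fm}$ where things could blow up. Finally I would remark that when $N$ is in addition Buchsbaum one expects equality (this is the content of the cited conjecture of Yoshida, recovered here as Fact A inside the proof of Proposition \ref{buchs} for the case $p=1$), so the inequality is sharp.
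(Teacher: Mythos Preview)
Your proposal is correct and follows essentially the same route as the paper: break the minimal free resolution of $M$ into short exact sequences, tensor with $N$, use that the finite-length $\Tor$'s make $\HH^{+}_{\fm}(\Syz_{j+1}(M)\otimes_RN)\simeq\HH^{+}_{\fm}(K_j)$, and iterate the resulting inequality $\h^i(\Syz_j(M)\otimes_RN)\le\beta_j(M)\h^i(N)+\h^{i+1}(\Syz_{j+1}(M)\otimes_RN)$ until the syzygy is free. Your attention to the finiteness bookkeeping (that $i+p<d$ keeps every $\h^{i+j}(N)$ finite because $N$, being locally free of constant rank over a Cohen--Macaulay ring, is generalized Cohen--Macaulay of dimension $d$) is in fact more explicit than what the paper writes down.
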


\begin{proof} For every module $L$ of finite projective dimension, we have
$\grade (L) + \dim (L) = \dim( R)$. In particular, if $L$ is perfect then $\dim (L)=\dim( R)-\pd(L)$.
Therefore, things are reduced to show $$\h^i(M\otimes_RN)\leq
\sum_{j=0}^{\pd(M)}\beta_j(M)\h^{j+i}(N)$$ for all $i< \dim (R)-\pd (M).$
We may assume that  $\pd( M)>0$. There is nothing to prove if $\dim (R)-\pd( M)=0$. Without loss
of the generality, $\pd (M)<\dim( R)=\depth(R)$. Now, the case $i=0$ is in Proposition \ref{vector}.  We may assume that $i>0$.
 Let $$f:\Syz_1(M)\otimes_RN\lo R^{\beta_{0}(M)}\otimes_RN$$ be the natural map.  Recall from Proposition \ref{vector}   that $\HH^i_{\fm}(\Syz_1(M)\otimes_RN)\simeq\HH^i_{\fm}(\ker(f))$ and there is an exact sequence $$ \HH^i_{\fm}(R^{\beta_{0}(M)}\otimes_RN)\to \HH^i_{\fm}(M\otimes_RN)\to \HH^{i+1}_{\fm}(\ker(f)).$$ Hence
$$
\h^i(M\otimes_RN)\leq \ell(\HH^{i+1}_{\fm}(\ker(f)))+\beta_{0}(M)\h^i(N)=\ell(\HH^{i+1}_{\fm}(\Syz_1(M)\otimes_RN))+\beta_{0}(M)\h^i(N).
$$
In the same vein, $\ell(\HH^{i+1}_{\fm}(\Syz_1(M)\otimes_RN))\leq\ell(\HH^{i+2}_{\fm}(\Syz_1(M)\otimes_RN))+\beta_{1}(M)\h^{i+1}(N).$
Therefore,
\[\begin{array}{ll}
\h^i(M\otimes_RN)&\leq \ell(\HH^{i+1}_{\fm}(\Syz_1(M)\otimes_RN))+\beta_{0}(M)\h^i(N)\\
&\leq\ell(\HH^{i+2}_{\fm}(\Syz_2(M)\otimes_RN))+\beta_{1}(M)\h^{i+1}(N)+\beta_{0}(M)\h^i(N).
\end{array}\]  Repeating this, $
\h^i(M\otimes_RN)
\leq\ell(\HH^{i+\ell}_{\fm}(\Syz_{\ell}(M)\otimes_RN))+\sum_{j=0}^{\ell-1}\beta_j(M)\h^{j+i}(N).$
We put  $\ell:=\pd (M)-i$ to see $$
\h^i(M\otimes _R N)
\leq\ell(\HH^{\pd (M)}_{\fm}(\Syz_{\pd( M)}(M)\otimes _RN))+\sum_{j=0}^{\ell-1}\beta_j(M)\h^{j+i}(N)=\sum_{j=0}^{\pd( M)}\beta_j(M)\h^{j+i}(N),$$as claimed.
\end{proof}

\begin{remark}The same proof shows that: Let $R$ be equi-dimensional and  generalized Cohen-Macaulay local ring and $N$ be locally free and of constant rank. If $\pd(M)<\depth(R)$, then
$$\h^i(M\otimes_RN)\leq
\sum_{j=0}^{\pd(M)}\beta_j(M)\h^{j+i}(N)$$ for all $i< \depth(R)-\pd(M).$\end{remark}

Having Fact \ref{hw} in mind, it may be nice to determine the case
for which $\depth(M)+\depth(N)$ is minimum. Recall that $M$ is called $p$-\textit{spherical} if $\pd(M) = p$ and $\Ext^i_R(M,R) = 0$ for $i\neq0$ and $i \neq p$.
In fact, the following two observations extend  some results of Auslander from regular rings to hypersurface rings.

\begin{observation}\label{sph}
Let  $(R,\fm)$ be such that its completion is a quotient of equicharacteristic regular local
ring by  a nonzero element and $M$ be torsion-free of constant rank, of  projective dimension $p\in \mathbb{N}$ and locally free. The  following are equivalent:
	\begin{enumerate}
	\item[i)] $\depth(M)+\depth(M^\ast)= \dim R+1$,
	\item[ii)]$M\otimes_RM^*$ is torsion-free,
	\item[iii)] $M$ is $p$-spherical.
\end{enumerate}
\end{observation}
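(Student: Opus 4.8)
The plan is to prove the cycle $(\mathrm{iii})\Rightarrow(\mathrm{i})\Rightarrow(\mathrm{ii})\Rightarrow(\mathrm{iii})$. A few normalizations first. Since $R$ is a hyper-surface it is Cohen--Macaulay, so $\depth R=\dim R=:d$ and $\depth M=d-\pd M=d-p$ by Auslander--Buchsbaum. We may assume $M$ has no nonzero free direct summand (one reduces to this by splitting off free summands), so that $M\neq 0$ and $p\geq 1$; as $M$ is torsion-free and $\fm\notin\Ass R$ we get $\fm\notin\Ass M$, hence $\depth M\geq 1$, i.e.\ $p\leq d-1$. Moreover $M^\ast\neq 0$, and since $M$ — hence also $M^\ast$ and $M\otimes_RM^\ast$ — is locally free over the punctured spectrum while $\fm\notin\Ass R$, we have the dictionary ``$M\otimes_RM^\ast$ torsion-free $\iff\HH^0_\fm(M\otimes_RM^\ast)=0$'', and similarly with $\Syz_1(M)$ in place of $M$. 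Finally, local freeness over the punctured spectrum forces $\Ext^i_R(M,R)$ to have finite length for all $i\geq 1$.

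For $(\mathrm{iii})\Rightarrow(\mathrm{i})$ I would dualize a minimal free resolution $0\to F_p\to\cdots\to F_0\to M\to 0$: the $p$-sphericity makes $0\to M^\ast\to F_0^\ast\to\cdots\to F_p^\ast\to\Ext^p_R(M,R)\to 0$ exact, so $M^\ast$ is a $(p+1)$-st syzygy of $T:=\Ext^p_R(M,R)$, which is nonzero (minimality of $F_\bullet$) and of finite length. Since a $j$-th syzygy of a nonzero finite length module has depth $\min(j,d)$, we get $\depth M^\ast=\min(p+1,d)=p+1$, hence $\depth M+\depth M^\ast=(d-p)+(p+1)=d+1$. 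For $(\mathrm{i})\Rightarrow(\mathrm{ii})$ I would invoke Fact \ref{hw} with $r=0$ and $N:=M^\ast$ — its hypotheses hold because $M\otimes_RM^\ast$ is $(\Se_1)$ over the punctured spectrum, $M$ has constant rank, and $\pd M<\infty$ — so from $\depth M^\ast+\depth M=d+1\geq d+1$ it follows that $\HH^0_\fm(M\otimes_RM^\ast)=0$, i.e.\ $M\otimes_RM^\ast$ is torsion-free.

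The substantive implication is $(\mathrm{ii})\Rightarrow(\mathrm{iii})$. The same instance of Fact \ref{hw}, read backwards, converts torsion-freeness of $M\otimes_RM^\ast$ into $\depth M+\depth M^\ast\geq d+1$, i.e.\ $\depth M^\ast\geq p+1$. I would then induct on $p$: for $p\leq 1$, $(\mathrm{iii})$ is vacuous. For $p\geq 2$ put $N:=\Syz_1(M)$, so $0\to N\to R^{\beta_0(M)}\to M\to 0$ with $\pd N=p-1$ and $N$ torsion-free, of constant rank, locally free over the punctured spectrum. Dualizing gives $\Ext^{j+1}_R(M,R)\cong\Ext^j_R(N,R)$ for $j\geq 1$ together with short exact sequences $0\to M^\ast\to R^{\beta_0(M)}\to U\to 0$ and $0\to U\to N^\ast\to\Ext^1_R(M,R)\to 0$. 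From the first, $\HH^i_\fm(U)\cong\HH^{i+1}_\fm(M^\ast)=0$ for $0\leq i\leq p-1$; substituting $\HH^0_\fm(U)=\HH^1_\fm(U)=0$ and $\HH^0_\fm(N^\ast)=0$ into the cohomology sequence of the second forces $\HH^0_\fm(\Ext^1_R(M,R))=0$, so $\Ext^1_R(M,R)=0$ (it has finite length). Then $0\to M^\ast\to R^{\beta_0(M)}\to N^\ast\to 0$ is exact, so $\depth N^\ast\geq\min(d,\depth M^\ast-1)\geq p$ and $\depth N+\depth N^\ast\geq(d-p+1)+p=d+1$; by Fact \ref{hw} applied to $N$ this makes $N\otimes_RN^\ast$ torsion-free, and the inductive hypothesis makes $N$ $(p-1)$-spherical. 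Hence $\Ext^i_R(M,R)\cong\Ext^{i-1}_R(N,R)=0$ for $2\leq i\leq p-1$, which together with $\Ext^1_R(M,R)=0$ is precisely $p$-sphericity.

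I expect the main obstacle to be exactly this last implication — turning torsion-freeness of $M\otimes_RM^\ast$ into cohomological vanishing for $M$. It is here that the hyper-surface hypothesis is genuinely used, via Fact \ref{hw}: over an arbitrary Cohen--Macaulay ring the passage from $\HH^0_\fm(M\otimes_RM^\ast)=0$ to $\depth M+\depth M^\ast\geq d+1$ can fail. One must also be careful with the boundary case $p=d-1$ in $(\mathrm{iii})\Rightarrow(\mathrm{i})$ (where $M^\ast$ is maximal Cohen--Macaulay) and with checking that $\Syz_1(M)$ inherits every hypothesis needed to reapply Fact \ref{hw} and the inductive hypothesis. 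As an alternative to the induction one can analyse the two spectral sequences computing $\HH^\bullet_\fm\big(\Hom_R(F_\bullet,R)\big)$: one collapses since $R$ is Cohen--Macaulay and each $F_i^\ast$ is free, so $\HH^n_\fm\big(\Hom_R(F_\bullet,R)\big)=0$ for $n<d$; the other then forces $\Ext^n_R(M,R)=E_\infty^{0,n}=0$ for $1\leq n\leq p-1$ once $\depth M^\ast\geq p+1$.
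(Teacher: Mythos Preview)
Your argument is correct. The implications $(\mathrm{iii})\Rightarrow(\mathrm{i})\Rightarrow(\mathrm{ii})$ match the paper's proof almost verbatim; the difference lies in $(\mathrm{ii})\Rightarrow(\mathrm{iii})$.

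The paper argues this step in one stroke rather than by induction on $p$. It lets $j$ be the least positive integer with $\Ext^j_R(M,R)\neq 0$, forms $L:=\coker\big((R^{\beta_j(M)})^\ast\!\to(R^{\beta_{j-1}(M)})^\ast\big)$ so that $\depth L=0$ and $0\to M^\ast\to F_0^\ast\to\cdots\to F_{j-1}^\ast\to L\to 0$ is exact, observes that $\Tor^R_{j+1}(L,M)\cong\Tor^R_1(\coker(M^\ast\hookrightarrow F_0^\ast),M)$ is a torsion submodule of the torsion-free module $M\otimes_RM^\ast$, hence zero, and then invokes Lichtenbaum's rigidity theorem for hypersurfaces together with Auslander's depth criterion to force $\pd(M)\le j$, whence $j=p$ and $M$ is $p$-spherical. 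This avoids the induction on $p$ and the repeated appeals to Fact~\ref{hw}, at the cost of citing Lichtenbaum's rigidity and \cite[Proposition~1.1]{au} explicitly.

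Your route is a legitimate alternative: you trade the single rigidity step for an induction that peels off one syzygy at a time, each time re-entering the Huneke--Wiegand depth inequality (Fact~\ref{hw}) to propagate torsion-freeness to $\Syz_1(M)\otimes_R\Syz_1(M)^\ast$. Since Fact~\ref{hw} already encodes hypersurface rigidity, the two approaches ultimately lean on the same phenomenon; the paper's is shorter and isolates the exact place rigidity is used, while yours stays entirely inside the local-cohomology and depth bookkeeping and makes the role of the bound $\depth M^\ast\ge p+1$ more transparent. Your concluding spectral-sequence remark is another valid shortcut to the same conclusion once $\depth M^\ast\ge p+1$ is known.
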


\begin{proof}
$i)\Rightarrow ii)$: Note that $\dim R>0$, because there is  a module of positive projective dimension.  By  Fact \ref{hw}, $\depth(M\otimes_RM^*)>0$. It follows that
  $M\otimes_RM^*$ is $(\Se_1)$. Thus, $M\otimes_RM^*$ is torsion-free.

$ii)\Rightarrow iii)$: Suppose $M\otimes_RM^*$ is torsion-free.
Let $j$ be the smallest positive integer such that  $\Ext^j_R(M,R) \neq0$.
Such a thing exists, because $0<\pd(M)<\infty$. Set $f:R^{\beta_{j}(M)}\to R^{\beta_{j-1}(M)}$. We look at  $L:=\coker(f^\ast)$ and the inclusion $k\subset\Ext^j_R(M,R)\subset L $. This shows that $\depth(L)=0$. Also, there are free modules $F_i$ such that $$0\lo M^\ast\stackrel{f}\lo F_0\lo\ldots\lo F_j\lo L\lo 0\quad(\ast)$$ Since
$\pd(M)<\infty$, $M$ is generically free. Hence, $\Tor_1^R(M,-)$ is torsion. Also, $ \Tor_1^R(M,\coker(f))\subset M\otimes_RM^*$. Thus $\Tor^R_{j+1}(L,M)=\Tor_1^R(M,\coker(f))=0$. By the rigidity theorem of Lichtenbaum \cite[Theorem 3]{l}, $\Tor^R_{i}(L,M)=0$ for all $i>j$. Since
 $\depth(L)=0$ this says that $\pd(M)\leq j$ (see \cite[Proposition 1.1]{au}). By definition, $M$ is $p$-spherical.

 $iii)\Rightarrow i)$:
 Assume that $M$ is $p$-spherical. There is an exact sequence
 $$0\lo M^\ast\lo ( R^{\beta_{0}(M)})^\ast\lo\ldots \lo (R^{\beta_{p}(M)})^\ast\lo L\lo 0.$$ Since $\Ext^p_R(M,R)\subset L $ and $\ell(\Ext^j_R(M,R))<\infty $ we deduce that $\depth (L)=0$. It turns out that
 $\depth(M^\ast)=p+1$. Due to  Auslander-Buchsbaum formula, $\depth(M)+\depth(M^\ast)= \dim R+1$.
\end{proof}

\begin{observation}\label{EI}
	Let  $(R,\fm)$ be as Observation \ref{sph},  $M$ and $N$ be of constant rank, of  finite projective dimension  and be  locally free.  Assume $M\otimes_RN$ is torsion-free.
Then either  $M$ or $N$ is  reflexive.
\end{observation}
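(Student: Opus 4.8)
The plan is to read off a depth inequality for $M$ and $N$ from Fact \ref{hw} and then observe that this inequality already forces one of the two modules to be reflexive. Before that I would clear away the trivial situations: we may assume $M\neq 0$ and $N\neq 0$ (the zero module is reflexive), and since the completion of $R$ is a hypersurface, $R$ is Gorenstein; in particular it is Cohen--Macaulay with $\depth R=\dim R=:d$. If $d=0$, then finiteness of projective dimension forces $M$ and $N$ to be free, hence reflexive, so from now on $d\geq 1$.

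The main step is as follows. Since $M$ and $N$ are locally free over the punctured spectrum, so is $M\otimes_RN$, and in particular $M\otimes_RN$ satisfies $(\Se_1)$ there; moreover $M\otimes_RN$ is torsion-free by hypothesis and $d\geq 1$, so no nonzero element of it is killed by a power of $\fm$, i.e.\ $\HH^0_{\fm}(M\otimes_RN)=0$. Now I would apply Fact \ref{hw} with $r=0$: its hypotheses are met ($\pd(M)<\infty$, at least one of $M,N$ has constant rank, and $M\otimes_RN$ is $(\Se_1)$ on the punctured spectrum), and it yields that $\HH^0_{\fm}(M\otimes_RN)=0$ is equivalent to $\depth(M)+\depth(N)\geq d+1$. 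Hence $\depth(M)+\depth(N)\geq d+1$.

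Next I would deduce that, after possibly interchanging $M$ and $N$, one has $\depth(N)\geq\min(2,d)$: if $d\geq 2$ and both depths were $\leq 1$ the sum would be $\leq 2<d+1$, while if $d=1$ and both depths were $0$ the sum would be $0<2$; both are impossible. Finally I would promote this depth bound to reflexivity of $N$. When $d=1$, $\depth(N)\geq 1=d$ makes $N$ maximal Cohen--Macaulay, and the Auslander--Buchsbaum formula then gives $\pd(N)=0$, so $N$ is free and a fortiori reflexive.

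When $d\geq 2$, the natural map $N\to N^{\ast\ast}$ is an isomorphism away from $\fm$ (since $N$ is locally free there), so its kernel and cokernel have finite length; the kernel lies in $\HH^0_{\fm}(N)=0$, so the map is injective and we get a short exact sequence $0\to N\to N^{\ast\ast}\to C\to 0$ with $\ell(C)<\infty$. Over the Gorenstein ring $R$ the dual module $N^{\ast\ast}$ is a second syzygy, hence has depth at least $\min(2,d)=2$; combining this with $\depth(N)\geq 2$ in the long exact sequence of local cohomology forces $\HH^0_{\fm}(C)=0$, so $C=0$ and $N$ is reflexive. I do not anticipate a real obstacle here: the substance is packaged into Fact \ref{hw}, and the only points needing attention are the verification of its hypotheses (notably the $(\Se_1)$ condition on the punctured spectrum and the low-dimensional and zero-module degeneracies) together with the standard fact that, over a Gorenstein local ring, a module that is locally free on the punctured spectrum and has depth at least $\min(2,\dim R)$ is reflexive.
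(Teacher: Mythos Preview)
Your proof is correct and follows the same line as the paper's: apply Fact \ref{hw} with $r=0$ to obtain $\depth(M)+\depth(N)\geq d+1$, deduce that one of the two modules has depth at least $\min(2,d)$, and conclude reflexivity from this together with local freeness on the punctured spectrum. The only cosmetic difference is in the final step: the paper disposes of the case $\depth(N)=d$ via Auslander--Buchsbaum (then $N$ is free), gets $\depth(M)\geq 2$ in the remaining case, and simply invokes that a locally free module of constant rank satisfying $(\Se_2)$ is reflexive, whereas you spell out the biduality argument directly.
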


\begin{proof} Over zero-dimensional Gorenstein rings any finitely generated module is reflexive.
	Then we may assume that $\dim R>0$. According to Fact \ref{hw} 
$$\depth(M)+\depth(N)\geq \dim R+1.$$ By Auslander-Buchsbaum formula, we may assume that $\depth(N)<\dim R$. From $$\depth(M)+\dim R>\depth(M)+\depth(N)\geq \dim R+1,$$ we conclude
that $\depth(M) \geq  2.$ It turns out that $M$ is $(\Se_2)$ and consequently, $M$ is reflexive.
	\end{proof}

\begin{example}
This is not true that both of $M$ and $N$ are reflexive. Indeed, let $R$ be a two dimensional regular local ring. Let $M:=R$ and $N:=\Syz_1(k)$. The assumptions of the above observation hold. In particular, $M\otimes_RN$ is torsion-free. But, $N$ is not reflexive.
\end{example}

\begin{observation}\label{4}Let  $(R,\fm)$ be   and such that its completion is a quotient of equicharacteristic regular local
	ring by  a nonzero prime element. Let $M$ be of  finite projective dimension. If $(M\otimes M^\ast)^{\bigotimes 2}$ is torsion-free, then $M$ is free. 
\end{observation}

\begin{proof}Since $R$ is a domain, $M$ is of constant rank. Also, the claim in 0-dimensional case follows from the fact that $R$ is domain.
	By induction on $\dim R$, we may assume that $M$ is locally free on the punctured spectrum.
By Observation \ref{EI}, $M\otimes M^\ast$ is reflexive. In view of \cite[Proposition 5.2]{tensor}, $M$ is free.
\end{proof}

 It follows that if $M$ is self-dual and $ M ^{\bigotimes 4}$ is torsion-free, then $M$ is free. 
 In Corollary \ref{4n} we will extend this observation.

\subsection{Being free of relations}

Let $(R,\fm)$ be a regular local ring of dimension $d$.
Auslander proved that the vanishing of $\HH^0_{\fm}(M^{\bigotimes d})=0$ implies freeness of $M$.
It follows easily from \cite[Proposition 3.4(3)]{tensor2} that the vanishing of $\HH^1_{\fm}(M^{\bigotimes (d-1)})=0$ implies freeness of $M$
provided $M$ is locally-free and  torsion-free.  Also, by \cite[Proposition 3.5(3)]{tensor2},   $\HH^2_{\fm}(M^{\bigotimes (d-2)})=0$ implies freeness of $M$
provided $M$ is locally-free and reflexive.

\begin{proposition}
Let $(R,\fm)$ be a regular local ring of dimension $d$ and $\fa$ be an ideal. Let $M$ be  locally-free over $\Spec(R)\setminus\V(\fa)$ and satisfying Serre's condition $(\Se_r)$. If $\HH^r_{\fa}(M^{\bigotimes (d-r)})=0$ then $M$ is free.
\end{proposition}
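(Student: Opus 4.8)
The plan is to prove the contrapositive: assuming $M$ is not free, I will exhibit a nonvanishing of $\HH^r_{\fa}(M^{\otimes(d-r)})$. First I would record what the hypotheses force. Since $M$ is locally free over $\Spec(R)\setminus\V(\fa)$ and satisfies $(\Se_r)$, the localization $M_{\fp}$ is free whenever $\fp\notin\V(\fa)$ and also whenever $\Ht\fp\leq r$ (a module over a regular local ring of dimension at most $r$ that is $(\Se_r)$ is maximal Cohen--Macaulay, hence free); in particular $\depth_R M\geq r$, so by Auslander--Buchsbaum $1\leq p:=\pd(M)\leq d-r$. Moreover every syzygy and every tensor power of $M$ is again locally free over $\Spec(R)\setminus\V(\fa)$, hence every $\Tor^R_j$ built from such modules is $\fa$-torsion and so contributes only to $\HH^0_{\fa}$.

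The core of the argument is a descent that bottoms out at Auslander's classical theorem, which is the case $r=0$. I would run it as an induction on $r$ (equivalently, on the number $d-r$ of tensor factors). For the inductive step one takes the minimal free resolution of $M$, breaks it into the short exact sequences $0\to\Syz_{j+1}(M)\to R^{n_j}\to\Syz_j(M)\to 0$, tensors each with the appropriate power of $M$, and chases the resulting long exact sequences in $\HH^{\bullet}_{\fa}$: the $\Tor_1$ terms are $\fa$-torsion and so disappear above degree $0$, while Proposition \ref{g} together with the grade estimate $\grade_R(\fa,M^{\otimes i})\geq d-i\pd(M)$ is used to annihilate the bottom local cohomology of the mixed tensor products $\Syz_j(M)\otimes(\text{power of }M)$ that occur. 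The target of the bookkeeping is that $\HH^r_{\fa}(M^{\otimes(d-r)})=0$ propagates either to $\HH^0_{\fa}(M^{\otimes d})=0$ — whence Auslander's theorem gives freeness — or to the case $\pd(M)=1$, which is settled directly: there $\Tor^R_1(M,M^{\otimes i})$ injects into a torsion-free module (from a length-one free presentation of $M$) yet is torsion, hence vanishes, so by rigidity of $\Tor$ over the regular ring $R$ (Lichtenbaum, as used in Observation \ref{sph}) all higher $\Tor$'s vanish and $\depth_R M^{\otimes(d-r)}=d-(d-r)=r$; combined with $\grade_R(\fa,M^{\otimes(d-r)})\geq d-(d-r)=r$ from the grade estimate, this pins $\grade_R(\fa,M^{\otimes(d-r)})=r$, so $\HH^r_{\fa}(M^{\otimes(d-r)})\neq 0$, a contradiction.

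The base case $r=0$ only requires noting that Auslander's statement applies in the present $\fa$-relative form: $\HH^0_{\fa}(M^{\otimes d})=0$, and since $M^{\otimes d}$ is locally free over $\Spec(R)\setminus\V(\fa)$, its torsion submodule is supported on $\V(\fa)$, hence killed by a power of $\fa$, hence contained in $\HH^0_{\fa}(M^{\otimes d})=0$; thus $M^{\otimes d}$ is torsion-free and $M$ is free.

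The step I expect to be the main obstacle is the descent in the second paragraph. Peeling a single factor off $M^{\otimes(d-r)}$ via $0\to\Syz_1(M)\to R^{n_0}\to M\to 0$ forces control of $\HH^r_{\fa}(\Syz_1(M)\otimes M^{\otimes(d-r)})$, a genuinely mixed tensor; Proposition \ref{g} handles it only if one can certify $\grade_R(\fa,\Syz_1(M))+\grade_R(\fa,M^{\otimes(d-r)})\geq d+r+1$, and the crude bound $\grade_R(\fa,M^{\otimes(d-r)})\geq d-(d-r)\pd(M)$ is too weak as soon as $\pd(M)\geq 2$. The delicate point is therefore to organise the induction correctly — choosing at each stage whether to lower $r$, to lower $\pd(M)$ by replacing $M$ with $\Syz_1(M)$ (which is $(\Se_{r+1})$ with projective dimension one smaller), or to lower $d$ — so that the grade data handed to Proposition \ref{g} is exactly what it needs and the chain of reductions actually closes back onto Auslander's $r=0$ case.
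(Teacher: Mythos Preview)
Your proposal is not a proof but a plan, and you yourself identify the gap: the descent in your second paragraph never closes. To peel a tensor factor off $M^{\otimes(d-r)}$ via the presentation of $M$ and control $\HH^r_{\fa}$ of the resulting mixed tensor, you want to invoke Proposition~\ref{g}, which requires
\[
\grade_R(\fa,\Syz_1(M))+\grade_R(\fa,M^{\otimes(d-r-1)})\geq d+r+1,
\]
but the only available estimate on the second summand is $\grade_R(\fa,M^{\otimes(d-r-1)})\geq d-(d-r-1)\pd(M)$, which is useless once $\pd(M)\geq 2$. Your suggested escape hatch---replacing $M$ by $\Syz_1(M)$ to drop the projective dimension---does not help, because the hypothesis is a vanishing statement about powers of $M$, not of $\Syz_1(M)$, and there is no mechanism in your outline for transporting $\HH^r_{\fa}(M^{\otimes(d-r)})=0$ to any statement about $\Syz_1(M)^{\otimes(d-r-1)}$. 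The three reductions you list (lower $r$, lower $\pd$, lower $d$) are not shown to be compatible with the hypothesis, so the induction has no engine.

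The paper avoids this entirely and does not reduce to Auslander's $r=0$ theorem. Instead it stays at the fixed level $r$ and invokes two results from the companion paper \cite{acs}: first, that the single vanishing $\HH^r_{\fa}(M^{\otimes(d-r)})=0$ propagates downward to $\HH^r_{\fa}(M^{\otimes j})=0$ for every $1\leq j\leq d-r$; second, that each such vanishing forces $\Tor^R_+(M,M^{\otimes(j-1)})=0$ together with $\depth(M^{\otimes j})>r$. With Tor-independence in hand, Auslander's depth formula (Fact~\ref{2au}.B) gives $\pd(M)=\depth(M^{\otimes j})-\depth(M^{\otimes(j+1)})$ for each $j$, and summing these telescopes to
\[
(d-r-1)\,\pd(M)=\depth(M)-\depth(M^{\otimes(d-r)})\leq (d-1)-(r+1)=d-r-2,
\]
which is impossible if $\pd(M)\geq 1$. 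The whole weight is carried by the two \cite{acs} inputs; without them (or something equivalent) your descent cannot be completed.
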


\begin{proof}First, we point out that $\grade_R(\fa,M)=\inf\{\depth(M_{\fp}):\fp\in\V(\fa)\}\geq r$. In the case $r=d$ we have $\depth(M)=d$. Thanks to Auslander-Buchsbaum formula,  we have $\pd(M)=0$.
Also, if $r=d-1$ then $\HH^r_{\fa}(M)=\HH^{<r}_{\fa}(M)=0$. Hence $ d\geq\depth(M)\geq \grade(\fa, M)=d$. Again,  Auslander-Buchsbaum implies that $\pd(M)=0$.
 Without loss of generality we may assume that $r<d-1$.
Suppose on the contradiction that $\depth(M)<d\quad(+)$.
Recall  from \cite[Lemma 3.7]{acs} that
$$\HH^{r}_{\fa}(M^{\otimes (d-r-1)})=\ldots=\HH^{r}_{\fa}(M^{\otimes 2})=\HH^{r}_{\fa}(M)=0.$$ We apply $\HH^{r}_{\fa}(M\otimes M) =0$ along with \cite[Theorem 3.8]{acs} to deduce that $\Tor_+^R(M,M)=0$ and $\depth_R(M\otimes_R M)>r$. In view of
Fact 3.3.B) $$
\pd_R(M)=\depth_R(M )-\depth_R(M\otimes M).$$
By the same vein,  $\Tor_+^R(M\otimes M,M)=0$ and $\depth_R(M^{\otimes 3})>r$.  In view of
Fact 3.3.B) $\pd_R(M)=\depth_R(M^{\otimes 2})-\depth_R(M^{\otimes 3}).$ Inductively,$$
\pd_R(M)=\depth_R(M^{\otimes j})-\depth_R(M^{\otimes j+1}) \quad(+,+)$$  for all   $1\leq j\leq d-r-1$
and that $\depth_R(M^{\otimes d-r})>r$. We sum all of $d-r-1$ formulas appeared   in $(+,+)$ together to see that
\[\begin{array}{ll}
(d-r-1).\pd_R(M)&=\depth_R(M )-\depth_R(M\otimes M)\\
&\quad+\depth_R(M\otimes M)-\depth_R(M^{\otimes 3})\\
&\quad+\ldots\\
&\quad+\depth_R(M^{\otimes (d-r-1})-\depth_R(M^{\otimes (d-r)})\\
&=\depth_R(M)-\depth_R(M^{\otimes d-r})\\
&\leq (d-1)-\depth_R(M^{\otimes d-r})\\
&< (d-1)-r\\
&=d-r-1\quad(\times)
\end{array}\]
Since
$\pd(M)\stackrel{(+)}\geq 1$ we have $$d-r-1\leq(d-r-1).\pd_R(M)\stackrel{(\times)}< d-r-1.$$ This
 contradiction shows that $\depth(M)=d$, and consequently $M$ is free.
\end{proof}

\begin{lemma}\label{laun}
	Let $R$ be any local  ring, $M$ be locally free  over
	$\Spec(R)\setminus\V(\fa)$  and  $\grade(\fa,M)>0$. If $\HH^0_{\fa}(M\otimes _R M^\ast)= \HH^1_{\fa}(M\otimes _R M^\ast)=0$, then $M$
	is free.
\end{lemma}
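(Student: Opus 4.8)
The plan is to force the canonical evaluation map $\varphi_M\colon M\otimes_R M^\ast\to\Hom_R(M,M)$ to be an isomorphism, and then to invoke Fact \ref{main5b}.C) to conclude that $M$ is projective, hence free over the local ring $R$. The whole argument is a short diagram chase with the section functor $\Gamma_{\fa}(-)=\HH^0_{\fa}(-)$.

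First I would record two structural facts. Since $M$ is locally free on $\Spec(R)\setminus\V(\fa)$, for every prime $\fp\notin\V(\fa)$ the localized map $(\varphi_M)_{\fp}=\varphi_{M_{\fp}}$ is an isomorphism; hence $K:=\ker(\varphi_M)$ and $C:=\operatorname{coker}(\varphi_M)$ are $\fa$-torsion finitely generated modules, so $\Gamma_{\fa}(K)=K$, $\Gamma_{\fa}(C)=C$, and $\HH^i_{\fa}(K)=\HH^i_{\fa}(C)=0$ for all $i>0$. Second, the hypothesis $\grade(\fa,M)>0$ supplies an $M$-regular element $x\in\fa$, and a one-line check ($xf=0$ forces $x\cdot f(m)=0$, hence $f(m)=0$, hence $f=0$) shows $x$ is a nonzerodivisor on $\Hom_R(M,M)$; therefore $\Gamma_{\fa}(\Hom_R(M,M))=0$.

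Then I would split $\varphi_M$ into the two short exact sequences $0\to K\to M\otimes_R M^\ast\to\im(\varphi_M)\to 0$ and $0\to\im(\varphi_M)\to\Hom_R(M,M)\to C\to 0$. Applying $\Gamma_{\fa}(-)$ to the first sequence and feeding in $\HH^0_{\fa}(M\otimes_R M^\ast)=0$ together with $\Gamma_{\fa}(K)=K$ and $\HH^1_{\fa}(K)=0$ forces $K=0$ and, simultaneously, $\Gamma_{\fa}(\im(\varphi_M))=0$; so $\varphi_M$ is injective and the second sequence reads $0\to M\otimes_R M^\ast\to\Hom_R(M,M)\to C\to 0$. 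Applying $\Gamma_{\fa}(-)$ to this and using $\HH^0_{\fa}(M\otimes_R M^\ast)=0$, $\Gamma_{\fa}(\Hom_R(M,M))=0$, $\Gamma_{\fa}(C)=C$, and the second hypothesis $\HH^1_{\fa}(M\otimes_R M^\ast)=0$, the relevant stretch of the long exact sequence collapses and yields $C=0$. Hence $\varphi_M$ is an isomorphism, and Fact \ref{main5b}.C) then gives that $M$ is projective, i.e. free.

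There is no serious obstacle here; the one point to get right is that the single vanishing $\HH^0_{\fa}(M\otimes_R M^\ast)=0$ is used twice — once to kill $K$ (and, crucially, to produce $\Gamma_{\fa}(\im\varphi_M)=0$, which is what lets the second sequence be read off cleanly without any hypothesis on $\HH^1_{\fa}(\Hom_R(M,M))$), and once again in the second chase — while $\HH^1_{\fa}(M\otimes_R M^\ast)=0$ is exactly what annihilates $C$. One should also keep the harmless reduction $M\neq 0$ and the finite generation of all modules in mind, so that $K$ and $C$ are finitely generated $\fa$-torsion modules enjoying the stated cohomological vanishing.
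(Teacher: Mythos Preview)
Your proof is correct and follows essentially the same route as the paper: both arguments show that $K$ and $C$ are $\fa$-torsion from local freeness on the punctured spectrum, kill $K$ via $K=\HH^0_{\fa}(K)\subset\HH^0_{\fa}(M\otimes_R M^\ast)=0$, use an $M$-regular element in $\fa$ to get $\HH^0_{\fa}(\Hom_R(M,M))=0$, and then read $C=0$ off the long exact sequence using $\HH^1_{\fa}(M\otimes_R M^\ast)=0$, concluding via Fact~\ref{main5b}.C). Your exposition is slightly more detailed (splitting through $\im(\varphi_M)$), but the substance is identical.
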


\begin{proof}
	Let   $\varphi_M:M\otimes M^\ast\to \Hom (M,M)$ be the natural map.
	Since $M$ is locally free    over
	$\Spec(R)\setminus\V(\fa)$, it follows from   that  $K:=\ker(\varphi_{M})$ and
	$C:=\coker(\varphi_{M})$ are $\fa$-torsion. Since
	$K=\HH^0_{\fa}(K)\subset\HH^0_{\fa}(M\otimes M^\ast)=0$ we have $K=0$. Let $x\in \fa$ be an $M$-sequence. It follows that $x$  is regular over $\Hom (M,M)$, i.e.,  $\grade(\fa,\Hom (M,M))>0$.
Consequently, $$0=\HH^0_{\fa}(\Hom_R(M,M))\to \HH^0_{\fa}(C)\lo\HH^1_{\fa}(M\otimes _R M^\ast)=0.$$
	Thus,  $C=\HH^0_{\fa}(C)=0$. In view of Fact \ref{main5b}.C, $M$
	is free.
\end{proof}
 As another freeness criteria, we show:

\begin{corollary}\label{sph1}
	Let $(R,\fm)$ be a local ring  and  $M$ be locally free   over
	$\Spec(R)\setminus\V(\fa)$   and of finite   projective dimension. If $\grade(\fa,M)+\grade(\fa,M^\ast)\geq \dim R+2$ then $M$ is free.
\end{corollary}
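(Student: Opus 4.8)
The plan is to read off Corollary \ref{sph1} from Proposition \ref{g} and Lemma \ref{laun}, applied with $N:=M^{\ast}$ and $r:=1$. We may assume $M\neq 0$, since otherwise $M$ is free. If $\dim R=0$ then $\pd(M)<\infty$ together with the Auslander--Buchsbaum formula already forces $\pd(M)=0$, so throughout we work with $d:=\dim R\geq 1$, and the argument below will in fact show $d\geq 2$.

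First I would extract the two numerical consequences of the hypothesis that we need. Since $\grade_R(\fa,M^{\ast})\leq\depth(M^{\ast})\leq\dim(M^{\ast})\leq d$, the assumption $\grade_R(\fa,M)+\grade_R(\fa,M^{\ast})\geq d+2$ forces $\grade_R(\fa,M)\geq 2$. Two things come out of this at once: $\grade_R(\fa,M)>0$, which is the standing hypothesis of Lemma \ref{laun}; and $\depth(M)\geq\grade_R(\fa,M)\geq 2$, so that $d\geq 2$ and hence the index condition $0\leq r=1<d$ of Proposition \ref{g} is met. Moreover the hypothesis reads exactly $\grade_R(\fa,M)+\grade_R(\fa,M^{\ast})\geq d+2=d+r+1$, while $M$ — one of the two modules in question — is locally free over $\Spec(R)\setminus\V(\fa)$ and has $\pd(M)<\infty$. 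Thus all the hypotheses of Proposition \ref{g} hold with $N=M^{\ast}$, and it yields $\HH^0_{\fa}(M\otimes_R M^{\ast})=\HH^1_{\fa}(M\otimes_R M^{\ast})=0$.

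Finally I would invoke Lemma \ref{laun}: $M$ is locally free over $\Spec(R)\setminus\V(\fa)$, $\grade_R(\fa,M)>0$, and $\HH^0_{\fa}(M\otimes_R M^{\ast})=\HH^1_{\fa}(M\otimes_R M^{\ast})=0$, so $M$ is free. That completes the proof.

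I do not expect a genuine obstacle here: the corollary is a short deduction from the two already-established results, and the content all sits inside them — the syzygy-by-syzygy local-cohomology chase of Proposition \ref{g} and the analysis of $\varphi_M\colon M\otimes_R M^{\ast}\to\Hom_R(M,M)$ in Lemma \ref{laun} (resting on the Auslander--Goldman criterion, Fact \ref{main5b}.C). The one point to be careful about is the bookkeeping of grade inequalities: one must genuinely produce $\grade_R(\fa,M)>0$ from $\grade_R(\fa,M)+\grade_R(\fa,M^{\ast})\geq d+2$ together with $\grade_R(\fa,M^{\ast})\leq d$, so that Lemma \ref{laun} is applicable, and one must check that the chosen $r=1$ satisfies $r<d$ before quoting Proposition \ref{g}.
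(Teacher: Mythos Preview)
Your proposal is correct and follows essentially the same route as the paper: derive $\grade_R(\fa,M)\geq 2$ from the hypothesis together with $\grade_R(\fa,M^\ast)\leq d$, apply Proposition \ref{g} with $N=M^\ast$ and $r=1$ to obtain $\HH^0_{\fa}(M\otimes_R M^\ast)=\HH^1_{\fa}(M\otimes_R M^\ast)=0$, and then finish with Lemma \ref{laun}. Your treatment is in fact slightly more careful about edge cases ($M=0$, $\dim R=0$) and about verifying $r<d$ than the paper's own proof.
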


\begin{proof} Since $\grade(\fa,M)+\grade(\fa,M^\ast)\geq \dim R+2$, it follows that $\grade(\fa,M)\geq 2$.
	By Auslander-Buchsbaum formula, $$d:=\dim R\geq\depth(R)\geq\depth(M)\geq\grade(\fa,M)\geq 2.$$
	Let $r:=1$. Then $$\grade(\fa,M)+\grade(\fa,M^\ast)\geq \dim R+r+1$$ and that
	$0<r<d.$
	Due to   Proposition \ref{g} we know that $\HH^0_{\fa}(M\otimes_RM^*)=\HH^1_{\fa}(M\otimes_RM^*)=0$.
	In view of the previous lemma, $M$ is free.
\end{proof}

\begin{example}
	The  assumption $\pd(M)<\infty$ is essential, see Example \ref{gex}(ii). Here, we present another one. Let $(R,\fm,k)$ be any 2-dimensional normal local ring which is not regular.
	Then there is a reflexive module $M$ which is not free, e.g. $M:=\Syz_{2}(k)$.
	Since normality implies $(\Se_2)$ and $(R_1)$, it follows that $M$ is locally free and $\depth(M)=\depth(M^\ast)=\dim R=2$. In particular,
	$\depth(M)+\depth(M^\ast)= \dim R+2$. However, $M$ is not free.
\end{example}

The presented bound in Corollary \ref{sph1} is sharp:
\begin{example}
 Let $(R,\fm,k)$ be any 3-dimensional regular local ring and $M:=\Syz_{2}(k)$.
 Then $M$ is locally free and $\depth(M)+\depth(M^\ast)= 2+2=\dim R+1$. However, $M$ is not free.
\end{example}

Vanishing of $\HH^{1}_{\fm}( M\otimes _RM^\ast)=0$ for any locally free and torsion-free module $M$ over any local
ring $R$ of dimension $d>1$, guarantee that $M$ is free (see \cite[Theorem 8.1]{acs}). This beautiful result extends Lemma \ref{laun} too. Here, we show the dual-twist is important.

\begin{example}\label{41}
	Let  $(R,\fm)$ be a regular local ring of dimension at least $4$. There is a locally free and reflexive module $M$ such that
	\begin{enumerate}
		\item[i)]  $M\otimes M$ is reflexive, i.e., $\HH^{0}_{\fm}( M\otimes _RM)=\HH^{1}_{\fm}( M\otimes _RM)=0$.
		\item[ii)]  $M$ is not free.
	\end{enumerate}
\end{example}

\begin{proof}
	We look at $M:=\Syz_{d-2}(k)$. By \cite[Page 638]{au} we know $M^{\bigotimes (d-1)}$ is torsion-free. Since
	$d-1\geq 3$ and in view of \cite[Proposistion 3.5]{au}, $M\otimes M$ is reflexive.
	So, both claims are clear.
\end{proof}

\section{Depth of tensor powers}
As another application, we are going to compute depth of tensor powers. Our  motivation comes from:

\begin{observation} \label{dpo}Let $(R,\fm)$ be a   local ring of dimension $d$  and  $M$ be locally free  over
	$\Spec(R)\setminus\V(\fa)$. Then $\grade(\fa,M^{\otimes i})\geq d-i\pd(M)$ for all $i>1$.
\end{observation}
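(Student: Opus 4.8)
The plan is an induction on $i$ in which Proposition \ref{g} does essentially all of the work. First I would strip away the degenerate cases: if $\pd(M)=\infty$ the asserted inequality reads $\grade(\fa,M^{\otimes i})\ge-\infty$ and there is nothing to prove, so put $p:=\pd(M)<\infty$ and assume $M\neq 0$; and since any grade is a nonnegative integer or $\infty$, the inequality is automatic whenever $d-ip\le 0$, so I may also assume $d-ip\ge 1$, which already forces $p<d$. With these reductions I would in fact prove $\grade(\fa,M^{\otimes i})\ge d-ip$ for all $i\ge 1$, the case $i=1$ serving as the base of the induction (it is not part of the statement but is needed to run it).

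The only input that is not mere bookkeeping is this base estimate $\grade(\fa,M)\ge d-p$. I would attack it through the Auslander--Buchsbaum formula $\depth(M)=\depth(R)-\pd(M)$ together with the hypothesis that $M$ is locally free over $\Spec(R)\setminus\V(\fa)$, comparing $\grade(\fa,M)=\inf\{\depth(M_{\fp}):\fp\in\V(\fa)\}$ with $\depth(M)$ using $\pd_{R_{\fp}}(M_{\fp})\le\pd_R(M)$ for $\fp\in\V(\fa)$. I expect this to be where the genuine difficulty, and the full use of the hypotheses, lie; everything else is formal.

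For the inductive step, assume $\grade(\fa,M^{\otimes(i-1)})\ge d-(i-1)p$ and apply Proposition \ref{g} to the pair $M$ and $N:=M^{\otimes(i-1)}$. Here $M$ has finite projective dimension and both $M$ and $M^{\otimes(i-1)}$ are locally free over $\Spec(R)\setminus\V(\fa)$, so the hypotheses of Proposition \ref{g} are satisfied; it matters that $M^{\otimes(i-1)}$ is placed in the slot that is allowed to have infinite projective dimension, since a tensor power of a module of finite projective dimension need not have finite projective dimension. Take $r:=d-ip-1$, so $0\le r<d$ by the reductions above. The grade hypothesis of Proposition \ref{g} to be verified is
\[
\grade(\fa,M)+\grade(\fa,M^{\otimes(i-1)})\ \ge\ d+r+1\ =\ 2d-ip,
\]
and the left-hand side is at least $(d-p)+\bigl(d-(i-1)p\bigr)=2d-ip$ by the base estimate and the inductive hypothesis. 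Proposition \ref{g} then gives $\HH^0_{\fa}(M^{\otimes i})=\cdots=\HH^r_{\fa}(M^{\otimes i})=0$, that is, $\grade(\fa,M^{\otimes i})\ge r+1=d-ip$, closing the induction.

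Finally, I would record that exactly the same chain of applications of Proposition \ref{g} yields the more flexible inequality $\grade(\fa,M^{\otimes i})\ge i\,\grade(\fa,M)-(i-1)d$ for $i\ge 1$, obtained by iterating $\grade(\fa,M^{\otimes i})\ge\grade(\fa,M^{\otimes(i-1)})+\grade(\fa,M)-d$; the displayed bound is its specialization along $\grade(\fa,M)\ge d-p$, and this reformulation is the one convenient for the depth computations in \S 6.
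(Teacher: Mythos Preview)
Your induction via Proposition~\ref{g} is exactly the paper's argument, up to an index shift (the paper runs the step as $i\to i+1$ with $r:=d-(i+1)p-1$, you as $i-1\to i$ with $r:=d-ip-1$); both dispose of $r<0$ trivially and otherwise feed $(d-p)+(d-ip)=d+r+1$ into Proposition~\ref{g}.

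You are right to isolate the base estimate $\grade(\fa,M)\ge d-p$ as the crux; the paper uses it without comment in the line ``$\ge(d-p)+(d-ip)$''. However, your sketch via local Auslander--Buchsbaum only yields $\depth(M_{\fp})\ge\depth(R_{\fp})-p$ for $\fp\in\V(\fa)$, hence $\grade(\fa,M)\ge\grade(\fa,R)-p$, which in general is strictly weaker than $d-p$; the locally-free hypothesis concerns $M_{\fp}$ only for $\fp\notin\V(\fa)$, so it contributes nothing here. On this point you are no worse off than the paper, but your proposed attack does not close the gap. Your final inequality $\grade(\fa,M^{\otimes i})\ge i\,\grade(\fa,M)-(i-1)d$ is precisely what the induction honestly delivers without that extra input, and it specializes to the stated bound once one has $\grade(\fa,M)\ge d-p$.
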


\begin{proof}We may assume that $p:=\pd(M)<\infty$.
We argue by induction on $i$.
The case $i=2$ is in the following construction. Now suppose, inductively, that $\grade(\fa,M^{\otimes i})\geq d-ip$.
Let $r:=d-ip-p-1$. Suppose $r<0$. Then  $$\grade(\fa,M^{\otimes i+1})\geq 0\geq r+1=d-(i+1)p,$$as claimed. Without loss of the generality we can assume that
$r\geq0$.
Then $0\leq r<d$ and
$$\grade(\fa,M)+\grade(\fa,M^{\otimes i})\geq(d-p)+(d-ip)=d+r+1.$$ In view of Proposition \ref{g}  we see $$\grade(\fa,M^{\otimes i+1})\geq r+1=d-ip-p=d-(i+1)p,$$as claimed
\end{proof}

Over regular rings, the following result  is due to Huneke-Wiegand (see \cite[Example 3.2]{tensor2}).

\begin{proposition} \label{dpoc}
Let $(R,\fm,k)$ be any  ring and  $M$ be locally free and of projective dimension $1$.  The
following assertions hold:\begin{enumerate}
	\item[i)]  $\depth(M^{\otimes i})= \depth(R)-i$ for all $0<i\leq\depth(R)$  and
	\item[ii)] $\depth(M^{\otimes i})=0$ for all $i\geq  \depth(R)$.
\end{enumerate} 
\end{proposition}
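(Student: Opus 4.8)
The plan is to put $d:=\depth(R)$ and to exploit that, since $\pd(M)=1<\infty$, the Auslander--Buchsbaum formula forces $d\geq 1$ and $\depth(M)=d-1$. I would record two preliminary remarks. First, because $M$ is locally free over the punctured spectrum, $\Tor_1^R(M,L)$ is of finite length for every finitely generated $L$. Second, fix a presentation $0\to R^a\to R^b\to M\to 0$; then $a\le b$, and if $a=0$ then $M$ is free, contradicting $\pd(M)=1$, while if $a=b$ then $M$ is killed by a nonzerodivisor (the determinant of the injective square matrix, via the adjugate identity), so $M$ --- being moreover locally free off $\fm$ --- is of finite length, which by Auslander--Buchsbaum forces $d=1$ and makes every $M^{\otimes i}$ with $i\ge 1$ of finite length; in that degenerate case the statement is immediate. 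So from now on I assume $b>a$, i.e. $b-a=\rank(M)\ge 1$.

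For part (i) I would prove by induction on $j$, for $1\le j\le d$, that $\pd(M^{\otimes j})=j$ and $\depth(M^{\otimes j})=d-j$. The case $j=1$ is the preliminary remark. For the inductive step, with $j\le d-1$, note $\depth(M^{\otimes j})=d-j\ge 1$, so $(M^{\otimes j})^{a}$ has positive depth and hence no nonzero submodule of finite length; on the other hand, tensoring $0\to R^a\to R^b\to M\to 0$ with $M^{\otimes j}$ shows that $\Tor_1^R(M,M^{\otimes j})$ --- which is of finite length --- embeds in $(M^{\otimes j})^{a}$, so it vanishes. As $\pd(M)=1$, this means $\Tor^R_{+}(M,M^{\otimes j})=0$, and Fact \ref{2au}.B) yields $\pd(M^{\otimes(j+1)})=\pd(M)+\pd(M^{\otimes j})=j+1$; then Auslander--Buchsbaum gives $\depth(M^{\otimes(j+1)})=d-(j+1)$. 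In particular this already establishes $\pd(M^{\otimes d})=d$ and $\depth(M^{\otimes d})=0$.

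For part (ii) I would induct on $i\ge d$, the case $i=d$ being done. Assume $\depth(M^{\otimes(i-1)})=0$ and put $N:=M^{\otimes(i-1)}$. Tensoring $0\to R^a\to R^b\to M\to 0$ with $N$ gives an exact sequence $0\to T\to N^{a}\to N^{b}\to M^{\otimes i}\to 0$ with $T:=\Tor_1^R(M,N)$ of finite length; split it as $0\to T\to N^{a}\to P\to 0$ and $0\to P\to N^{b}\to M^{\otimes i}\to 0$, where $P$ is the image of $N^a\to N^b$. Applying $\Gamma_{\fm}$ to the first sequence and using that $T$ is of finite length gives $\ell(\HH^0_{\fm}(P))=a\,\ell(\HH^0_{\fm}(N))-\ell(T)$; since $P\hookrightarrow N^{b}$, the map $\HH^0_{\fm}(P)\to\HH^0_{\fm}(N^{b})$ is injective, so the second sequence gives $\ell(\HH^0_{\fm}(M^{\otimes i}))\ge \ell(\HH^0_{\fm}(N^{b}))-\ell(\HH^0_{\fm}(P))=(b-a)\,\ell(\HH^0_{\fm}(N))+\ell(T)$. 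Now $b-a\ge 1$, and $\depth(N)=0$ forces $\HH^0_{\fm}(N)\ne 0$, so the right-hand side is positive; hence $\HH^0_{\fm}(M^{\otimes i})\ne 0$, i.e. $\depth(M^{\otimes i})=0$.

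The bookkeeping (the finite-length-of-$\Tor$ remark, the Auslander--Buchsbaum accounting, the $a=b$ degeneration) is routine. The step I expect to be the real obstacle is part (ii): once the depths have bottomed out at $0$, the usual depth lemma is too weak to propagate the vanishing, and one genuinely needs the explicit length estimate on $\HH^0_{\fm}$ across the four-term sequence, where it is essential that $b>a$ (equivalently, that $M$ has positive rank) so that $\HH^0_{\fm}(N^{b})$ outweighs the image of $\HH^0_{\fm}(N^{a})$. This is also the point at which the hypothesis that $M$ is locally free cannot be removed.
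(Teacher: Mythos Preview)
Your proof is correct and follows essentially the same route as the paper: part (i) is identical, and part (ii) uses the same $\HH^0_{\fm}$-length bookkeeping across the four-term sequence obtained by tensoring the presentation $0\to R^a\to R^b\to M\to 0$ with $M^{\otimes(i-1)}$. The only cosmetic difference is that the paper argues part (ii) by contradiction, invoking \cite[Proposition~1.1]{au} to get $\Tor_1^R(M,M^{\otimes(i-1)})\neq 0$ and then deriving $n>m$ against $n\le m$, whereas you dispose of the $a=b$ case separately up front and then use the strict inequality $b>a$ together with $\HH^0_{\fm}(N)\neq 0$ directly, which lets you avoid that extra citation.
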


\begin{proof}
i) Set $d:=\depth(R)$.
By induction on $i$  we claim that $\pd(M^{\otimes i})=i<\infty$, e.g., $\depth(M^{\otimes i})= d-i$.
 The case $i=1$ follows by Auslander-Buchsbaum formula. Suppose $i-1<d-1$ and that $\pd(M^{\otimes i-1})= i-1$.
Let $0\to R^n\to R^m\to M\to 0$ be a free resolution. Then we have $$0\lo\Tor_1^R(M,M^{\otimes i-1})\lo R^n\otimes_RM^{\otimes i-1}\lo R^m\otimes_RM^{\otimes i-1}\lo M^{\otimes i}\lo 0.$$
Suppose in the contradiction that $\Tor_1^R(M,M^{\otimes i-1})\neq0$. From locally freeness, $k\subset\Tor_1^R(M,M^{\otimes i-1})$. Thus,
$k\subset\Tor_1^R(M,M^{\otimes i-1})\subset R^n\otimes_RM^{\otimes i-1}$, i.e., $\depth(M^{\otimes i-1})= 0$. But,
$$\depth(M^{\otimes i-1})= d-i+1>0.$$ This contradiction says that $\Tor_1^R(M,M^{\otimes i-1})=0$. Also, $\Tor_{>1}^R(M,M^{\otimes i-1})=0$ because $\pd(M)=1$.
That is the pair $(M,M^{\otimes i-1})$ is Tor-independent. If $P_{\bullet}$ (resp. $Q_{\bullet}$ ) is a minimal free resolution of $M$ (resp. $M^{\otimes i-1}$), then 
 \begin{equation*}
H^n(P_{\bullet}\otimes Q_{\bullet})=\Tor_{n}^R(M,M^{\otimes i-1})=\left\{
\begin{array}{rl}
M^{\otimes i}& \  \   \   \   \   \ \  \   \   \   \   \ \text{if } \   \  n=0  \\
0 & \  \   \   \   \   \ \  \   \   \   \   \ \text{otherwise}\   \
\end{array} \right.
\end{equation*}
Therefore, $P_{\bullet}\otimes Q_{\bullet}$ is a minimal free resolution of $M^{\otimes i}$. From this
$$\pd(M^{\otimes i})=\pd(M^{\otimes i-1})+\pd(M)=(i-1)+1=i.$$
In view of Auslander-Buchsbaum formula,
$$\depth(M^{\otimes i})=\depth(R)-\depth(M^{\otimes i})=d-i.$$

ii) 
By induction on $i$  we claim that $\depth(M^{\otimes d+i})=0$. The case $i=0$ is in part i) where we observed that
 $\depth(M^{\otimes d})=0$. 
 Now suppose, inductively, that $i\geq 1$ and assume the claim for $i-1$. 
Let $$0\lo R^n\lo R^m\lo M\lo 0$$ be a free resolution of $M$. Let $\fp$ be any minimal prime ideal. Note that $R_{\fp}$
is artinian. We localize the sequence at $\fp$ to see that $0\to R_{\fp}^n\to R_{\fp}^m$. Thus,
$$n\ell(R_{\fp})=\ell(R_{\fp}^n)\leq \ell(R_{\fp}^m)=m\ell(R_{\fp}).$$  Consequently,  $n\leq m\quad(+)$. 
We look at $$0\lo\Tor_1^R(M,M^{\otimes d+i-1 })\lo R^n\otimes_RM^{\otimes d+i-1 }\lo R^m\otimes_RM^{\otimes d+i-1 }\lo M^{\otimes d+i}\lo 0 \quad(\ast)$$
By induction hypothesis, we know   $\depth(M^{\otimes d+i-1})=0$. In view of \cite[Proposition 1.1]{au} it follows that $T:=\Tor_1^R(M,M^{\otimes d+i-1  })\neq0$. Clearly, $T$ is of finite length. In  view of Grothendieck's vanishing theorem, $\HH^1_{\fm}(T)=0$.
Also, $\HH^0_{\fm}(T)=T\neq0$. Suppose on the contradiction
that $\HH^0_{\fm}(M^{\otimes d+i})=0$.
We break down
$(\ast)$ into short exact sequences and apply the section functor to deduce the following exact sequences:\begin{enumerate}
	\item[a)]  $0\lo \HH^0_{\fm}(T) \lo \HH^0_{\fm}( R^n\otimes_RM^{\otimes d+i-1 })\lo \HH^0_{\fm}(X)\lo \HH^1_{\fm}(T)=0$,
	\item[b)] $0\lo \HH^0_{\fm}(X)\lo\HH^0_{\fm}( R^m\otimes_RM^{\otimes d+i-1 })\lo \HH^0_{\fm}(M^{\otimes d+i})=0   $.
\end{enumerate}  From the additivity of   length function\[\begin{array}{ll}
n\h^0_{\fm}(  M^{\otimes d+i-1 })&=\h^0_{\fm}( R^n\otimes_RM^{\otimes d+i-1 })\\
&\stackrel{a)}=\h^0_{\fm}(T)+ \h^0_{\fm}(X)\\
&\stackrel{b)}=\ell(T)+\h^0_{\fm}( R^m\otimes_RM^{\otimes d +i-1 })\\
&=\ell(T)+m\h^0_{\fm}( M^{\otimes +i-1 })\\
&> m\h^0_{\fm}( M^{\otimes d+i-1 }).
\end{array}\]

 From this we conclude that $n>m$. This is in a  contradiction with $(+)$. Thus $\HH^0_{\fm}(M^{\otimes d+i})\neq0$. By definition,
 $\depth(M^{\otimes d+i})=0$.  
\end{proof}

\begin{example}
The first item shows that the locally free assumption is important. The second item shows that finiteness of projective dimension is important:
\begin{enumerate}
\item[i)] Let $R$ be a $d$-dimensional Cohen-Macaulay local ring and  let $\underline{x}:=x_1,\ldots, x_{d-1}$ be a parameter sequence and look at $M:=R/\underline{x}R$.
Then $\pd(M)=1$ and that $\depth(M^{\otimes i})=\depth(M)=d-1$ for all $i\geq  1$.
\item[ii)] Let $R:=k[[x,y]]/(xy)$ and let $M:=R/xR$.
Recall that any module over 1-dimensional reduced ring is locally free and that $\depth(M^{\otimes i})=\depth(M)= 1$ for all $i>0$.
\end{enumerate}
\end{example}
Similarly, we have:

\begin{proposition} \label{dpoc2} Let $(R,\fm)$ be any  local ring and $M$ be locally free and of finite projective dimension $p$.  Then $\depth(M^{\otimes i})= \depth(R)-ip$ for all $0<i\leq\frac{\depth(R)}{p}$.
\end{proposition}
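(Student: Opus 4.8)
The plan is to prove, by induction on $i$, the stronger assertion that $\pd(M^{\otimes i})=ip$ for all $0<i\leq\depth(R)/p$. Since such $i$ satisfy $ip\leq\depth(R)<\infty$, this projective dimension is finite, and the desired equality $\depth(M^{\otimes i})=\depth(R)-ip$ then follows at once from the Auslander--Buchsbaum formula. We may assume $p\geq1$ (if $p=0$ then $M$ is free and there is nothing to prove) and also $\depth(R)\geq p$ (otherwise the statement is vacuous). The base case $i=1$ is just the hypothesis $\pd(M)=p$ combined with Auslander--Buchsbaum.

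For the inductive step fix $2\leq i\leq\depth(R)/p$ and assume $\pd(M^{\otimes i-1})=(i-1)p$; note that $i-1$ again lies in the admissible range $0<i-1\leq\depth(R)/p$, so the hypothesis is legitimate. By Auslander--Buchsbaum, $\depth(M^{\otimes i-1})=\depth(R)-(i-1)p$, and here the numerical hypothesis $i\leq\depth(R)/p$ is used in its essential way: it gives precisely $\depth(M^{\otimes i-1})\geq p>0$. Next I would establish that $(M,M^{\otimes i-1})$ is Tor-independent. Since $M$ is locally free on the punctured spectrum and $\dim R\geq\depth(R)\geq p>0$, each $\Tor^R_j(M,M^{\otimes i-1})$ with $j\geq1$ has finite length, hence has depth $0$ when nonzero. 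Suppose $\Tor^R_+(M,M^{\otimes i-1})\neq0$ and let $q\geq1$ be the largest index with $\Tor^R_q(M,M^{\otimes i-1})\neq0$. Then $\depth(\Tor^R_q(M,M^{\otimes i-1}))=0\leq1$, so Fact \ref{2au}.A), applied to the pair $(M,M^{\otimes i-1})$ with $\pd(M)=p<\infty$, yields $\depth(M^{\otimes i-1})=0+p-q\leq p-1$, contradicting $\depth(M^{\otimes i-1})\geq p$. Therefore $\Tor^R_+(M,M^{\otimes i-1})=0$.

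Given this vanishing, Fact \ref{2au}.B) gives $\pd(M^{\otimes i})=\pd(M)+\pd(M^{\otimes i-1})=p+(i-1)p=ip$, which closes the induction; equivalently, the tensor product of the minimal free resolutions of $M$ and of $M^{\otimes i-1}$ is again minimal and resolves $M^{\otimes i}$, exactly as in the proof of Proposition \ref{dpoc}. Since $ip\leq\depth(R)<\infty$, Auslander--Buchsbaum then gives $\depth(M^{\otimes i})=\depth(R)-ip$, as claimed. The only real obstacle is the Tor-vanishing step, and it is dispatched by the standard device of inspecting the highest nonzero $\Tor$ and invoking Auslander's depth equality; the remaining points---finiteness of the projective dimensions in play, the reduction to $\depth(R)\geq p$, and the availability of the inductive hypothesis for $i-1$---are routine and mirror the proof of Proposition \ref{dpoc}.
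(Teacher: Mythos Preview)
Your proof is correct and follows essentially the same route as the paper: induction on $i$, using Auslander's depth formula (Fact \ref{2au}.A) to force $\Tor_+^R(M,M^{\otimes i-1})=0$ via the finite-length of these Tor modules, and then Fact \ref{2au}.B) to compute the projective dimension of $M^{\otimes i}$. Your choice to carry $\pd(M^{\otimes i-1})=(i-1)p$ as the induction hypothesis, rather than the equivalent depth statement, is in fact slightly cleaner than the paper's version, which has to pause and separately argue that $\pd(M^{\otimes i-1})<\infty$ before invoking Fact \ref{2au}.B).
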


\begin{proof} Set $d:=\depth(R)$ and
let $0<i\leq\frac{d}{p}$.
We argue by induction on $i$.  The case $i=1$ is in the Auslander-Buchsbaum formula. Now suppose, inductively, that $i\geq 2$ and assume the claim for $i-1$. This means that
$\depth(M^{\otimes i-1})= d-(i-1)p$.
Let $q$ be the largest number such that  $\Tor_q^R(M, M^{\otimes i-1})\neq0$.  Suppose in the contradiction that $q>0$.
In view of Fact \ref{2au}.A we see $$\depth(M^{\otimes i-1})=\depth(\Tor_q^R(M, M^{\otimes i-1}))+\pd(M)-q=p-q.$$
Since $i\leq d/p$ we have $ip-d\leq0$. Then$$q=p-\depth(M^{\otimes i-1})=p-d+(i-1)p=ip-d\leq0.$$This contradiction says that
$q=0$. Similarly,  $\Tor_+^R(M, M^{\otimes i-2})=0$.  If $P_{\bullet}$ (resp. $Q_{\bullet}$ ) is  a minimal  free resolution of $M$ (resp. $M^{\otimes i-2}$ ), then 
 $P_{\bullet}\otimes Q_{\bullet}$ is a minimal free resolution of $M^{\otimes i-1}$. From this
$\pd(M^{\otimes i-1})$ is finite.
Therefore, in view of Fact \ref{2au}.B) we see
$$\depth(M^{\otimes i})=\depth(M)+\depth(M^{\otimes i-1})-\depth(R)=(d-p)+(d-(i-1)p)-d=d-ip,$$as claimed.
\end{proof}

\begin{example}
Let $(R,\fm)$ be Cohen-Macaulay and let $0\leq i\leq d:=\dim R$. There is a module $M$ such that $\depth(M^{\otimes n})=i$  for all $n\geq  1$.
\end{example}

\begin{proof}
Indeed, let $\underline{x}:=x_1,\ldots, x_{d-i}$ be a parameter sequence and look at $M:=R/\underline{x}R$. Then
$\HH^{<i}_{\fm}(M^{\otimes n})\simeq\HH^{<i}_{\fm}(M)=0$ and $\HH^{i}_{\fm}(M^{\otimes n})\simeq\HH^{i}_{\fm}(M)\neq0$. Thus,
$\depth(M^{\otimes n})=i$  for all $n\geq  1$.
\end{proof}

\begin{observation}
	Let  $(R,\fm)$ be such that its completion is a quotient of equicharacteristic regular local
	ring by  a nonzero element and $M$ be  of constant rank, of finite  projective dimension  and locally free. Assume  in addition that
	$M\simeq M^\ast$. Then  $\depth(M^{\otimes i})$ is constant for all $i>2$.
\end{observation}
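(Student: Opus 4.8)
The plan is to reduce to showing that, when $M$ is not free, $\depth(M^{\otimes i})=0$ for every $i\ge 3$; together with the trivial case of free $M$ (where each $M^{\otimes i}$ is free, so the depths are all $\depth R$) this yields the asserted constancy. Set $p:=\pd M$ and assume $p\ge 1$. Since $\widehat R=S/(f)$ with $S$ regular local and $0\ne f\in S$, the ring $\widehat R$, hence $R$, is Cohen--Macaulay, so that $d:=\dim R=\depth R$ and $\depth M=d-p$ by Auslander--Buchsbaum; also $d\ge 1$, since a module of finite projective dimension over an Artinian ring is free. Finally, each $M^{\otimes i}$ is locally free over the punctured spectrum, of constant rank, and $\pd M<\infty$, so Fact \ref{hw} is available for the pair $(M^{\otimes i},M)$.

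First I would prove $\depth(M^{\otimes 2})\le p$. If $\depth(M^{\otimes 2})=0$ this is clear. Otherwise $M^{\otimes 2}$ has positive depth; being locally free on the punctured spectrum, its torsion submodule has finite length and must therefore vanish, so $M^{\otimes 2}$ is torsion-free. Via the isomorphism $M\cong M^\ast$ this means $M\otimes_R M^\ast$ is torsion-free, so by the equivalences in Observation \ref{sph} we get $\depth M+\depth M^\ast=d+1$, i.e.\ $2(d-p)=d+1$, hence $d=2p+1$; then $d/p=2+1/p\ge 2$, so Proposition \ref{dpoc2} gives $\depth(M^{\otimes 2})=d-2p=1\le p$. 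In all cases $\depth(M^{\otimes 2})+\depth M=\depth(M^{\otimes 2})+(d-p)\le d<d+1$.

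Next I would turn this into a non-vanishing statement using Fact \ref{hw} with $r=0$: for any $j\ge 1$ the module $M^{\otimes(j+1)}$ is free over the punctured spectrum, hence $(\Se_1)$ there, so Fact \ref{hw} says that $\HH^0_\fm(M^{\otimes j}\otimes_R M)=0$ holds if and only if $\depth(M^{\otimes j})+\depth M\ge d+1$. Taking $j=2$ and using the previous step, $\HH^0_\fm(M^{\otimes 3})\ne 0$, i.e.\ $\depth(M^{\otimes 3})=0$. Taking $j=i$ for an $i\ge 3$ with $\depth(M^{\otimes i})=0$ gives $\depth(M^{\otimes i})+\depth M=d-p<d+1$, hence $\depth(M^{\otimes(i+1)})=0$. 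By induction $\depth(M^{\otimes i})=0$ for all $i\ge 3$, completing the argument.

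The hard part is the first step: without the self-duality hypothesis $\depth(M^{\otimes 2})$ can exceed $p$ — for a locally free $M$ with $\pd M=1$ over a ring of depth $\ge 4$ one has $\depth(M^{\otimes 2})=d-2\ge 2>1$, and then $\depth(M^{\otimes i})=\max\{0,d-i\}$ is genuinely non-constant on $i\ge 3$ — and it is exactly $M\cong M^\ast$, routed through the $p$-spherical dichotomy of Observation \ref{sph}, that forces $d=2p+1$ whenever $M^{\otimes 2}$ is torsion-free and thereby rules this out. The remaining thing to verify is the routine bookkeeping that constant rank, local freeness on the punctured spectrum, and finiteness of $\pd$ pass to all tensor powers $M^{\otimes i}$, as needed to apply Fact \ref{hw}.
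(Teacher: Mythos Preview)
Your argument is correct and shares the paper's overall skeleton: reduce to the non-free case, show $\depth(M^{\otimes i})=0$ for all $i\ge 3$, and use Fact \ref{hw} at $r=0$ to propagate vanishing of depth from one tensor power to the next. The genuine difference is in how the base case --- controlling $M^{\otimes 2}$ --- is handled. The paper argues by contradiction: take a minimal $i>2$ with $\depth(M^{\otimes i})>0$, apply Fact \ref{hw} to get $\depth(M^{\otimes i-1})\ge 2$, conclude $i=3$ by minimality, and then invoke Lemma \ref{laun} (the Auslander--Goldman projectivity criterion) on $M\otimes_R M^\ast\cong M^{\otimes 2}$ to force $M$ free. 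You instead analyze $M^{\otimes 2}$ directly: if it has positive depth, Observation \ref{sph} pins down $d=2p+1$, and Proposition \ref{dpoc2} then computes $\depth(M^{\otimes 2})=1$, whence $\depth(M^{\otimes 2})+\depth M\le d$. Your route is a bit longer (two auxiliary results instead of one) but extracts more precise numerics; the paper's route via Lemma \ref{laun} is shorter and avoids computing $d$ explicitly. One minor remark: in your closing sentence you list finiteness of projective dimension among the properties that must pass to $M^{\otimes i}$, but Fact \ref{hw} only requires $\pd(M)<\infty$ for the single factor $M$, so this is not actually needed.
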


\begin{proof}Since $M$ is self-dual it is torsion-free (in fact reflexive). Without loss of the generality we can assume that $\dim R>0$.
There is nothing to prove if $M$ is free. Then, we may assume that $M$ is not free. We are going to show that $\depth(M^{\otimes i})=0$   for all $i>2$. Suppose not, then there
is an $i>2$ such that $\depth(M^{\otimes i})\neq0$. Take such an $i$ in a minimal way. Since $M$ is not free, and in view of Auslander-Buchsbaum formula,
$\depth(M)<d:=\depth(R)$. Recall that $M^{\otimes i}$ is torsion-free, because it is $(\Se_1)$. Let $r:=0$. Then $0\leq r<\dim R$. In particular,
we are in the situation of Fact \ref{hw}. We put things into Fact \ref{hw} to see $$\depth(M^{\otimes i-1})+(d-1)\geq\depth(M^{\otimes i-1})+\depth(M)\geq d+1,$$
and so $$\depth(M^{\otimes i-1})\geq 2\quad(\ast)$$ It follows from the   minimality of $i$  that  $i=3$.
   Due to $(\ast)$, we see $\depth(M^{\otimes 2})\geq 2$. Since $M\simeq M^\ast$, and in view of Lemma \ref{laun} we
see $M$ is free. This is a contradiction that we searched for it. Therefore, $\depth(M^{\otimes i})=0$   for all $i>2$.
\end{proof}

 The above proof shows:
 
\begin{corollary}\label{3}Adopt the above assumption.
 Let $i>2$ and assume in addition that $\dim R>0$. If $M^{\otimes i}$ is torsion-free, then $M$ is free.	
\end{corollary}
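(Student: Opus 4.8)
The plan is to obtain this as an immediate consequence of the preceding observation, read contrapositively. Suppose, toward a contradiction, that $M$ is not free. The observation---more precisely, the non-free branch of its proof---shows that then $\depth(M^{\otimes i})=0$ for every $i>2$. So the only thing left to check is that torsion-freeness of $M^{\otimes i}$, combined with $\dim R>0$, is incompatible with $\depth(M^{\otimes i})=0$.

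For that I would exploit the structural hypothesis on $R$: its completion is a quotient of an equicharacteristic regular local ring by a nonzero element, so $R$ is a hypersurface, hence Cohen--Macaulay and equidimensional; in particular $R$ has no embedded primes, $\Ass(R)=\Min(R)$, and $\dim R>0$ forces $\fm\notin\Min(R)$. A torsion-free module $N$ over such a ring satisfies $\Ass(N)\subseteq\Min(R)$: any $\fp=\Ann(x)\in\Ass(N)$ consists of zerodivisors on $R$, hence lies inside $\bigcup_{\fq\in\Ass(R)}\fq$, so by prime avoidance $\fp\subseteq\fq$ for some $\fq\in\Ass(R)=\Min(R)$, whence $\fp=\fq\in\Min(R)$. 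Applying this with $N=M^{\otimes i}$ gives $\fm\notin\Ass(M^{\otimes i})$, i.e. $\depth(M^{\otimes i})>0$, contradicting $\depth(M^{\otimes i})=0$. Therefore $M$ is free.

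Given the observation, this is essentially a one-line deduction, so I do not expect a genuine obstacle; the only subtlety worth spelling out is that ``torsion-free'' is being invoked over a ring that need not be a domain, which is exactly why I pass through the hypersurface/Cohen--Macaulay structure to guarantee that a nonzero torsion-free module has positive depth.
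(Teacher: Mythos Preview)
Your proposal is correct and matches the paper's intended argument: the paper states the corollary immediately after the observation with only the sentence ``The proof extends a result of Auslander from regular rings to hypersurfaces,'' leaving the deduction implicit. You have supplied exactly the missing step---that over the Cohen--Macaulay ring $R$ with $\dim R>0$, a nonzero torsion-free module has positive depth---and your prime-avoidance justification of $\Ass(M^{\otimes i})\subseteq\Min(R)$ is accurate.
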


\begin{remark}
The assumption $\dim R>0$ is essential. For example, let $R:=k[[x]]/(x^2)$ and look at $M:=k$. For each $i$, we know $M^{\otimes i}=k$ is torsion-free (in fact totally reflexive).
		Clearly, $M$ is not free.
\end{remark}

 The following extends a result of Auslander from regular rings to hypersurfaces:

\begin{corollary}\label{4n}Let  $(R,\fm)$ be  such that its completion is a quotient of equicharacteristic regular local
	ring by  a nonzero prime element. Let $M$ be of  finite projective dimension and be self-dual.
	If $M^{\bigotimes 3}$ is torsion-free, then $M$ is free. 
\end{corollary}

\begin{proof}Note that $R$ is a domain.
By induction on $\dim R$, we may and do assume that $M$ is locally free.	By Corollary \ref{3} $M$ is free. 
\end{proof}

We close the paper by computing $\depth(M^{\otimes n})$ for a module of infinite free resolution.
\begin{example}
Let $(R,\fm,k)$ be any local ring  and let $i\geq2$.  Then \begin{equation*}
\depth(\fm^{\otimes i})=\left\{
\begin{array}{rl}
1& \  \   \   \   \   \ \  \   \   \   \   \ \text{if } \   \  R \textit{ is } \DVR \\
0 & \  \   \   \   \   \ \  \   \   \   \   \ \text{otherwise}\   \
\end{array} \right.
\end{equation*}
The same thing holds for all  $\fm$-primary ideals provided $R$ is a hypersurface ring of dimension bigger than $1$.
\end{example}

\begin{proof}First assume that $\depth(R)=0$. Since $\HH^0_{\fm}(R)\neq 0$, there is a nonzero $r\in R$ such that $r\fm=0$. It follows that $r\in \fm$. Thus, $\HH^0_{\fm}(\fm)\neq 0$, i.e., $\depth(\fm)=0$.
	We   proceed by induction on $i$ that $\depth(\fm^{\otimes i})=0$.  We look at $0\to \fm \to R\to k\to 0$
	and we drive the  exact sequence $$0\lo \Tor_1^R(k,\fm^{\otimes i})\lo \fm^{\otimes i+1}\lo \fm^{\otimes i}\lo\frac{\fm^{\otimes i}}{\fm \fm^{\otimes i}}\lo 0.$$
	Suppose  $\Tor_1^R(k,\fm^{\otimes i})=0$. It follows that $\fm^{\otimes i}$ is free. From this, $\fm^{\otimes i+1}=\oplus \fm $ which is of depth zero.
	Then we may assume that $\Tor_1^R(k,\fm^{\otimes i})\neq0$. It is of finite length.  Thus, $k\subset\Tor_1^R(k,\fm^{\otimes i})$.
	Since $k\subset\Tor_1^R(k,\fm^{\otimes i})\subset \fm^{\otimes i+1}$, we get that
	$\depth(\fm^{\otimes i+1})=0$.

Then without loss of the generality we assume that $\depth(R)>0$.	
In the case $R$ is $\DVR$, the maximal ideal is principal. From this, $\fm$ is free and so $\fm^{\otimes i}$ is free. Thus, $\depth(\fm^{\otimes i})=1$.
Now assume that $R$ is not $\DVR$. In particular, $\beta_2(k)\neq0$. We   proceed by induction
on $i$. When $i=2$  we have $$\tor(\fm^{\otimes 2})=\Tor_2^R(k,k)\simeq k^{\beta_2(k)}.$$
Since $\beta_2(k)\neq0$,   we deduce $\tor(\fm^{\otimes 2})\neq0$. Consequently, $\depth(\fm^{\otimes 2})=0$. Now suppose, inductively, that $\depth(\fm^{\otimes i})=0$. Again,  we drive the  exact sequence $$0\lo \Tor_1^R(k,\fm^{\otimes i})\lo \fm^{\otimes i+1}\lo \fm^{\otimes i}\lo\frac{\fm^{\otimes i}}{\fm \fm^{\otimes i}}\lo 0.$$
Suppose on the contradiction that $\Tor_1^R(k,\fm^{\otimes i})=0$. Then
$\beta_1(\fm^{\otimes i})=0$ and so $\pd(\fm^{\otimes i})=0$. Since $\fm^{\otimes i}$ is free and $R$ is of positive degree we see that $\depth(\fm^{\otimes i})>0$, a contradiction.
This  says that $\Tor_1^R(k,\fm^{\otimes i})\neq0$. It is of finite length.  Thus, $k\subset\Tor_1^R(k,\fm^{\otimes i})$.
Since $$k\subset\Tor_1^R(k,\fm^{\otimes i})\subset \fm^{\otimes i+1},$$ we get that
$\depth(\fm^{\otimes i+1})=0$.

Now assume $I$ is an $\fm$-primary ideal of a hypersurface ring of dimension $d>1$.  We   proceed by induction
on $i>1$ that  $\depth(I^{\otimes i})=0$. Suppose, inductively, that $\depth(I^{\otimes i})=0$ and drive the  exact sequence $0\to \Tor_1^R(R/I,I^{\otimes i})\to I^{\otimes i+1}\to I^{\otimes i}.$
We need to show $\Tor_1^R(R/I,I^{\otimes i})\neq0$.
Suppose on the contradiction that $\Tor_1^R(R/I,I^{\otimes i})=0$.
Due to the first rigidity theorem \cite[2.4]{tensor}, any finite length module over  hypersurface  is rigid. From this, $\Tor_+^R(R/I,I^{\otimes i})=0$ and so $\Tor_+^R(I,I^{\otimes i})=0$. By depth formula over complete-intersection rings (see \cite[2.5]{tensor}) we know that
$$2\leq\depth(I^{\otimes i+1})+\depth(R)=\depth(I)+ \depth(I^{\otimes i})=1+0=1,$$a contradiction.
It remains to check the case $i=2$. This divided in two cases: a) $d>2$ and b) $d=2$.
\begin{enumerate}
	\item[a)]: Let $d>2$. Suppose  $\Tor_1^R(R/I,I)=0$. Then $\Tor_+^R(R/I,I)=0$, and so  $\Tor_+^R(I,I)=0$.
	Hence $$3\leq\depth(I^{\otimes 2})+\depth(R)=\depth(I)+ \depth(I) =2.$$
	This contradiction implies that $\Tor_1^R(R/I,I)\neq0$. Therefore, $\depth(I^{\otimes 2}) =0,$
	because $k\subset\Tor_1^R(R/I,I)\subset I^{\otimes 2}$.
	\item[b)]: Let $d=2$. First assume that $\Tor_1^R(R/I,I)=0$. Recall that
	any finite length module over  hypersurface  is rigid.
	Then $\Tor_+^R(R/I,I)=0$ and so $\Tor_+^R(I,I)=0$.
	Over hypersurfaces, this says that $\pd(I)<\infty$ (see \cite[Theorem 1.9]{tensor2}).
	By Auslander-Buchsbaum formula, $\pd(R/I)=d$. Thus, $\pd(I)=d-1=1$.
	Let $P_{\bullet}$ be a minimal free resolution of $I$. Since  $P_{\bullet}\otimes P_{\bullet}$ is acyclic, 
 we conclude that $P_{\bullet}\otimes P_{\bullet}$ is a minimal free resolution of $I^{\otimes 2}$.
	From this, $\pd(I^{\otimes 2})=2\pd(I)=2$. Thanks to Auslander-Buchsbaum formula,
	$$\depth(I^{\otimes 2})=d-\pd(I^{\otimes 2})=2-2=0.$$
	Then we can assume that $\Tor_1^R(R/I,I)\neq0$.  This implies that $\depth(I^{\otimes 2}) =0.$
\end{enumerate}
The proof is now complete.
\end{proof}

\begin{acknowledgement}
	I would like to thank  Arash Sadeghi and Olgur Celikbas because of a   talk.
\end{acknowledgement}

\end{document}